\newtheorem{maintheorem}{Theorem}
\newtheorem{proposition}{Proposition}[section]
\newtheorem{theorem}[proposition]{Theorem}
\newtheorem{lemma}[proposition]{Lemma}
\newtheorem{fact}[proposition]{Fact}
\newtheorem*{fact*}{Fact}
\newtheorem*{claim*}{Claim}
\theoremstyle{definition}
\newtheorem{remark}[proposition]{Remark}
\numberwithin{equation}{section}
\DeclareMathOperator{\Char}{Char}
\DeclareMathOperator{\trdeg}{tr.deg}
\renewcommand{\H}{\operatorname{H}}
\DeclareMathOperator{\D}{D}
\DeclareMathOperator{\I}{I}
\DeclareMathOperator{\Hom}{Hom}
\newcommand{\Z}{\mathbb{Z}}
\newcommand{\Q}{\mathbb{Q}}
\newcommand{\Nbb}{\mathbb{N}}
\newcommand{\Vc}{\mathcal{V}}
\newcommand{\Oc}{\mathcal{O}}
\newcommand{\Cc}{\mathcal{C}}
\newcommand{\mf}{\mathfrak{m}}
\newcommand{\gfrak}{\mathfrak{g}}
\newcommand{\Mbf}{\mathbf{M}}
\newcommand{\Nbf}{\mathbf{N}}
\newcommand{\Rbf}{\mathbf{R}}
\renewcommand{\I}{\operatorname{I}}
\renewcommand{\D}{\operatorname{D}}
\newcommand{\Ivis}{\mathfrak{I}_{\rm vis}}
\newcommand{\Nbar}{\overline{\Nbb}}
\newcommand{\rl}{\operatorname{rank}_\ell}
\newcommand{\rr}{\operatorname{rank}_\Q}
\begin{document}
\title{Abelian-by-Central Galois Groups of Fields II:\\ Definability of Inertia/Decomposition Groups}
\author{Adam Topaz}
\thanks{This research was supported by NSF postdoctoral fellowship DMS-1304114.}
\address{Adam Topaz \vskip 0pt
Department of Mathematics \vskip 0pt
University of California, Berkeley \vskip 0pt
970 Evans Hall \#3840 \vskip 0pt
Berkeley, CA. 94720-3840 \vskip 0pt
USA}
\email{atopaz@math.berkeley.edu}
\urladdr{http://math.berkeley.edu/~atopaz}
\date{\today}
\subjclass[2010]{12J20, 12E30, 12F10, 20E18.}
\keywords{Abelian-by-central, pro-$\ell$ Galois groups, local theory, valuation theory, (quasi-) divisorial valuations, (minimized) decomposition theory}

\begin{abstract}
	This paper explores some first-order properties of commuting-liftable pairs in pro-$\ell$ abelian-by-central Galois groups of fields.
	The main focus of the paper is to prove that minimized inertia and decomposition groups of many valuations are first-order definable using a predicate for the collection of commuting-liftable pairs.
\end{abstract}

\maketitle

\tableofcontents

\section{Introduction}

\emph{Birational anabelian geometry} is a subject where one tries to reconstruct fields of arithmetic and/or geometric significance from their Galois groups.
Most strategies in birational anabelian geometry have two main steps: the \emph{local theory} and the \emph{global theory}.
In the local theory, one tries to recover as much information as possible about the inertia and decomposition structure of valuations using the given Galois theoretical data.
And in the global theory, one tries to make sense of the local data to obtain meaningful information about the field in question.
This paper concerns the \emph{local theory} in birational anabelian geometry.

The history behind the local theory in birational anabelian geometry is quite rich, but we will focus on more recent developments in this overview.
For a more comprehensive discussion, see the introduction of \cite{Topaz2012c}.
On the one hand, one has the local theories which use \emph{large Galois groups} as their input.
For instance, one can recover the inertia and decomposition groups of \emph{$\ell$-tamely branching valuations} using the structure of the maximal pro-$\ell$ Galois group of a field which contains $\mu_\ell$; see \cite{Engler1994}, \cite{Efrat1995} and \cite{Engler1998}.
One can also reconstruct inertia and decomposition groups of certain valuations in absolute Galois groups of \emph{arbitrary fields} \cite{Koenigsmann2003}.

On the other hand, it has recently become apparent that much smaller Galois groups suffice to detect valuations.
Such local theories attempt to recover information about inertia/decomposition groups using \emph{abelian-by-central} Galois groups, or other similar ``almost-abelian'' invariants of a field, such as the Milnor K-ring or the Galois cohomology ring.
The first such theory was originally proposed by {\sc Bogomolov} \cite{Bogomolov1991} then further developed by {\sc Bogomolov-Tschinkel} \cite{Bogomolov2007} in the context of function fields over $\ell$-closed fields.
Furthermore, the mod-$2$ abelian-by-central context was first explored by \cite{Mah'e2004} with relation to valuations and orderings.
It was then shown that the mod-$\ell$ abelian-by-central Galois group encodes the existence of a tamely-branching $\ell$-Henselian valuation \cite{Efrat2011a}, based on results that detect valuations using mod-$\ell$ Milnor K-rings \cite{Efrat1999} \cite{Efrat2006b}.
Finally, \cite{Topaz2012c} shows that the \emph{minimized} inertia and decomposition groups of \emph{almost arbitrary} valuations can be recovered using the mod-$\ell^n$ abelian-by-central Galois group.
The one thing that most of these local theories have in common, the most general results of \cite{Topaz2012c} in particular, is that the recipe to recover inertia and decomposition groups is inherently \emph{second-order} and \emph{non-effective}, since the recipe involves looking for maximal subgroups which satisfy certain properties.

The present paper extends the local theories which recover inertia and decomposition groups using the mod-$\ell^n$ abelian-by-central Galois group.
The main property that sets this paper apart from its predecessors, is that the recipes described here are inherently \emph{first order}.
In more precise terms, in this paper we will show that the \emph{minimized} inertia and decomposition groups of many valuations are (uniformly) first-order definable by \emph{explicit} formulas in a natural language of abelian-by-central groups, given a suitable definable set of parameters (which exists in most situations).

A more broad goal of this paper is to initiate the development of an \emph{effective} version of Bogomolov's program in birational anabelian geometry.
This program, which was first introduced in \cite{Bogomolov1991}, aims to reconstruct higher-dimensional function fields over an algebraically closed field from their pro-$\ell$ abelian-by-central Galois groups.
The program was later formulated into a precise functorial conjecture by {\sc Pop} \cite{Pop2011}, and this conjecture is now commonly referred to as the \emph{Bogomolov-Pop conjecture} in birational anabelian geometry.
See \cite{Pop2012a}, \cite{Pop2011} or \cite{Topaz2013rational} for the precise formulation of the Bogomolov-Pop conjecture in birational anabelian geometry.

While the Bogomolov-Pop conjecture is still open in full generality, it has been proven in a few important cases by {\sc Bogomolov-Tschinkel} \cite{Bogomolov2008a} \cite{Bogomolov2011}, by {\sc Pop} \cite{Pop2003} \cite{Pop2011} \cite{Pop2011b}, and also by {\sc Silberstein} \cite{Silberstein2012}.
Nevertheless, the applicable local theory for the Bogomolov-Pop conjecture is by now well-developed; see \cite{Pop2010} and \cite{Pop2011a}.
In all known cases of the Bogomolov-Pop conjecture, the actual recipe which constructs the field from the given Galois group is, unfortunately, a second-order recipe, and one main reason for this is because of the local theory.
This paper therefore tackles the initial step in developing a first-order/effective version of Bogomolov's program, by providing a first-order recipe to determine the minimized inertia and decomposition groups of so-called \emph{quasi-divisorial} valuations.

\subsection{Minimized Galois Theory}

Throughout the note we will work with a fixed prime $\ell$.
We put $\Nbb := \{1,2,\ldots\}$ and $\Nbar := \Nbb \cup \{\infty\}$, with $\infty > n$ for all $n \in \Nbb$.
Throughout the note, we will also work with a fixed element $n \in \Nbar$.

For $m \in \Nbar$, we will consider fields $K$ such that $\mu_{\ell^m} \subset K$, but we impose no restrictions whatsoever on the characteristic of $K$.
Namely, the condition $\mu_{\ell^m} \subset K$ means simply that the polynomial $X^{\ell^m}-1$ splits completely in $K$ if $m \neq \infty$.
And the condition $\mu_{\ell^\infty} \subset K$ means that $X^{\ell^m}-1$ splits completely for all $m \in \Nbb$.
Because we impose no restrictions on the characteristic, we will need to work in the context of \emph{$\ell^m$-minimized Galois theory}, which we recall below.
The connection between the $\ell^m$-minimized theory and pro-$\ell$ Galois theory in the usual sense, for fields of characteristic $\neq \ell$, was the focus of the first paper in the series \cite{2013arXiv1310.5613T}; see Remark \ref{remark: Zassenhass} for more details about this connection.

For $m \in \Nbar$, we have a corresponding coefficient ring defined as
\[ \Lambda_m := \begin{cases}
	\Z/\ell^m, & m \neq \infty. \\
	\Z_\ell, & m = \infty.
\end{cases}\]
For a field $K$, we define the {\bf $\ell^m$-minimized Galois group of $K$} as follows:
\[ \gfrak^m(K) := \Hom(K^\times,\Lambda_m). \]
We will endow $\gfrak^m(K)$ with the \emph{point-wise convergence topology} which makes $\gfrak^m(K)$ into an abelian pro-$\ell$ group of exponent $\ell^m$.
For a subset $\Sigma \subset \gfrak^m(K)$, we recall that the {\bf orthogonal of $\Sigma$} is the subgroup of $K^\times$ defined as follows:
\[ \Sigma^\perp := \bigcap_{\sigma \in \Sigma} \ker\sigma. \]

Our main theorems will have an assumption on $K$ of the form $\mu_{2\ell^m} \subset K$.
If $\ell \neq 2$, we note that this is equivalent to the usual assumption $\mu_{\ell^m} \subset K$.
In general, we note that the assumption $\mu_{2\ell^m} \subset K$ ensures that $\sigma(-1) = 0$ for all $\sigma \in \gfrak^m(K)$.

A pair of elements $\sigma,\tau \in \gfrak^m(K)$ will be called a {\bf C-pair} provided that the following condition holds true: For all $x \in K\smallsetminus \{0,1\}$, one has 
\[ \sigma(x)\tau(1-x) = \sigma(1-x) \tau(x). \]
A subset $\Sigma$ of $\gfrak^m(K)$ will be called a {\bf C-set} if any pair of elements $\sigma,\tau \in \Sigma$ forms a C-pair.
Note that $\Sigma$ is a C-set if and only if $\langle \Sigma \rangle_{\Lambda_m}$ is a C-set, where $\langle \Sigma \rangle_{\Lambda_m}$ denotes the (closed) subgroup of $\gfrak^m(K)$ generated by $\Sigma$.

\begin{remark}[Connection with Galois Theory]
\label{remark: Zassenhass}
	For simplicity of notation, we assume in this remark that $m\neq\infty$; the case $m = \infty$ works in a similar way by passing to the limit.
	Suppose that $K$ is a field such that $\Char K \neq \ell$ and $\mu_{2\ell^m} \subset K$, and let $G_K$ denote the absolute Galois group of $K$.
	We recall that the first two non-trivial terms in the {\bf $\ell^m$-Zassenhauss filtration} of $G_K$ are defined as follows:
	\begin{enumerate}
		\item $G_K^{(2)} = [G_K,G_K] \cdot (G_K)^{\ell^m}$.
		\item $G_K^{(3)} = [G_K,G_K^{(2)}] \cdot (G_K)^{\delta \cdot \ell^m}$, where $\delta = 1$ if $\ell \neq 2$ and $\delta = 2$ if $\ell = 2$.
	\end{enumerate}

	Choose a primitive $\ell^m$-th root of unity $\omega \in \mu_{\ell^m} \subset K$.
	With this choice, Kummer theory yields an \emph{isomorphism} of pro-$\ell$ groups:
	\[ \sigma \mapsto \sigma^\omega : G_K/G_K^{(2)} \rightarrow \gfrak^m(K) \]
	which is defined by the condition that $\sigma^\omega(x) = i$ if and only if $\sigma(\sqrt[\ell^m]{x}) = \omega^i \cdot \sqrt[\ell^m]{x}$.
	Furthermore, for $x \in K^\times$, we let $(x)$ denote the image of $x$ under the \emph{Kummer map} $K^\times \rightarrow \H^1(K,\mu_{\ell^m})$.
	With this notation, \cite[Theorem 4]{2013arXiv1310.5613T} can be summarized as the following fact.
	\begin{fact}
	\label{fact: Kummer-MS}
		In the notation above, let $\sigma,\tau \in G_K/G_K^{(2)}$ be given.
		Then the following conditions are equivalent:
		\begin{enumerate}
			\item For all $x,y \in K^\times$ such that $(x) \cup (y) = 0$, one has $\sigma^\omega(x)\cdot \tau^\omega(y) = \sigma^\omega(y)\cdot \tau^\omega(x)$.
			\item $(\sigma^\omega,\tau^\omega)$ is a C-pair (as defined above).
			\item There exist representatives $\tilde\sigma,\tilde\tau \in G_K$ of $\sigma,\tau$ such that $\tilde\sigma^{-1}\tilde\tau^{-1}\tilde\sigma\tilde\tau \in G_K^{(3)}$.
		\end{enumerate}
	\end{fact}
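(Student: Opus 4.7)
The plan is to establish the equivalences in a cycle, treating (1)$\Leftrightarrow$(2) as a cohomological fact about Steinberg symbols and (1)$\Leftrightarrow$(3) as an interpretation of the Zassenhaus quotient $G_K^{(2)}/G_K^{(3)}$ via Kummer theory and cup products. Throughout, the hypothesis $\mu_{2\ell^m}\subset K$ is used to ensure $(-1)=0$ in $\H^1(K,\mu_{\ell^m})$, so that anti-symmetry and squaring issues at $\ell=2$ do not interfere.

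For (1)$\Rightarrow$(2), I would simply specialize $y=1-x$ and invoke the Steinberg relation $(x)\cup(1-x)=0$ in $\H^2(K,\mu_{\ell^m}^{\otimes 2})$. For the converse (2)$\Rightarrow$(1), the key input is the Merkurjev--Suslin theorem, which identifies the norm-residue map $K_2^M(K)/\ell^m \to \H^2(K,\mu_{\ell^m}^{\otimes 2})$ as an isomorphism. Consequently, the kernel of the cup product map $K^\times/\ell^m \otimes K^\times/\ell^m \to \H^2(K,\mu_{\ell^m}^{\otimes 2})$ is generated (in the symmetric sense, using $(-1)=0$) by elements of Steinberg type $x\otimes(1-x)$. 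Hence the bilinear-in-$(\sigma^\omega,\tau^\omega)$ symbol obtained by commuting the evaluation can be checked on these generators, and this reduction is exactly condition (2).

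For (1)$\Leftrightarrow$(3), I would identify the commutator pairing
\[ G_K/G_K^{(2)} \times G_K/G_K^{(2)} \longrightarrow G_K^{(2)}/G_K^{(3)},\quad (\tilde\sigma,\tilde\tau)\mapsto [\tilde\sigma,\tilde\tau]\bmod G_K^{(3)}, \]
with the dual (via Pontryagin duality and the Kummer isomorphism $\sigma\mapsto\sigma^\omega$) of the cup product $\H^1(K,\mu_{\ell^m})\otimes\H^1(K,\mu_{\ell^m})\to\H^2(K,\mu_{\ell^m}^{\otimes 2})$. Concretely, the relations defining $G_K^{(3)}$ account for the Bockstein/squaring terms that arise when $\ell=2$ (this is why the exponent $\delta\cdot\ell^m$ appears), and modulo these the commutator pairing is exactly $\sigma^\omega(x)\tau^\omega(y)-\sigma^\omega(y)\tau^\omega(x)$ evaluated on the Kummer element attached to $(x)\cup(y)$. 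Therefore $[\tilde\sigma,\tilde\tau]\in G_K^{(3)}$ for suitable lifts if and only if this antisymmetric pairing annihilates every cup product relation, which is precisely condition (1).

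The main obstacle is the identification in the last paragraph: making the commutator pairing on $G_K^{(2)}/G_K^{(3)}$ precisely dual to cup product, with the correct normalizations at $\ell=2$, requires a careful unwinding of the Zassenhaus filtration together with Merkurjev--Suslin to pin down the target. Once that dictionary is in place, the three conditions become formal consequences of each other, and the remaining arguments are routine. In the paper at hand this work is done once and for all in \cite[Theorem 4]{2013arXiv1310.5613T}, so here I would just cite that result and re-state the three equivalent conditions in the present notation.
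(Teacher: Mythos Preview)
Your proposal is correct and ultimately aligned with the paper: the paper does not prove this fact at all but simply records it as a summary of \cite[Theorem 4]{2013arXiv1310.5613T}, and you conclude by citing the same result. The sketch you give of (1)$\Leftrightarrow$(2) via Merkurjev--Suslin and (1)$\Leftrightarrow$(3) via duality between the commutator pairing on $G_K^{(2)}/G_K^{(3)}$ and cup product is an accurate outline of what that reference contains, so your write-up adds useful context but is not in tension with the paper's approach.
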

	Using Fact \ref{fact: Kummer-MS}, we see that essentially all of the results in this paper can be easily translated to the usual Galois theoretical setting, for fields $K$ such that $\Char K \neq \ell$ and $\mu_{\ell^m} \subset K$ for $m$ sufficiently large.
	However, the so-called \emph{minimized} context defined above is more general since fields $K$ of characteristic $\ell$ are allowed.
\end{remark}

\subsection{Minimized Decomposition Theory}

Suppose now that $v$ is a valuation of $K$.
We let $\Oc_v$ denote the valuation ring with valuation ideal $\mf_v$.
Furthermore, we let $vK$ denote the value group of $K$, and we let $Kv$ denote the residue field of $v$.
We let $U_v := \Oc_v^\times$ denote the group of $v$-units, and we let $U_v^1 := (1+\mf_v)$ denote the group of principal $v$-units.

The {\bf minimized inertia} resp. {\bf decomposition} groups of $v$ are defined as follows:
\[ \I_v^m := \Hom(K^\times/U_v,\Lambda_m) \ \text{resp.} \ \D_v^m := \Hom(K^\times/U_v^1,\Lambda_m). \]
Note that $\I_v^m \subset \D_v^m \subset \gfrak^m(K)$, and that both $\I_v^m$ and $\D_v^m$ are closed subgroups of $\gfrak^m(K)$.
The minimized inertia and decomposition groups agree with the usual inertia and decomposition groups of $v$ in the case where $\Char Kv \neq \ell$, via the identification described in Remark \ref{remark: Zassenhass}; see \cite[Proposition 9.2]{Topaz2012c} for the details.

\subsection{$m$-Lifts}
\label{subsection: intro / lifts}

Assume now that $n,m \in \Nbar$ and that $n \leq m$.
For an element $a \in \Lambda_m$, we let $a_n$ denote the image of $a$ under the canonical map $\Lambda_m \rightarrow \Lambda_n$.
Similarly, for an element $\sigma \in \gfrak^m(K)$, we let $\sigma_n$ denote the element of $\gfrak^n(K)$ defined by
\[ \sigma_n(x) = \sigma(x)_n. \]

Let $\sigma \in \gfrak^n(K)$ be given.
We will say that $\sigma' \in \gfrak^m(K)$ is an {\bf $m$-lift} of $\sigma$ provided that $\sigma'_n = \sigma$.
To simplify the exposition, if $\sigma_1,\ldots,\sigma_r$ is a collection of elements of $\gfrak^n(K)$, then we will say that $\sigma_1',\ldots,\sigma_r'$ are {\bf $m$-lifts} of $\sigma_1,\ldots,\sigma_r$ provided that $\sigma_i'$ is an $m$-lift of $\sigma_i$ for all $i = 1,\ldots,r$.

\subsection{Visible Valuations}
\label{subsection: intro / visible valuations}

Although the valuations which we consider in this paper are fairly general, we still need to impose some restrictions.
We will say that a valuation $v$ of $K$ is an {\bf $m$-visible valuation} provided that the following three conditions hold true:
\begin{enumerate}
	\item[(V1)] $vK$ contains no non-trivial $\ell$-divisible convex subgroups.
	\item[(V2)] $\gfrak^m(Kv)$ is not a C-set.
	\item[(V3)] If $w$ is a valuation of $Kv$ such that $\D_w^m = \gfrak^m(Kv)$, then one has $\I_v^m = 1$.
\end{enumerate}
It turns out that most valuations which are of interest in anabelian geometry are indeed $m$-visible (for all $m$).
For instance, if $v$ is a valuation such that $vK$ contains no non-trivial $\ell$-divisible convex subgroups and such that $Kv$ is a function field of transcendence degree $\geq 1$ over an algebraically closed field, then $v$ is $m$-visible for all $m$.
In the notation of \S\ref{subsection: intro / quasi-divisorial valuations}, all quasi-divisorial valuations are visible; see Lemma \ref{lemma: quasi-divisors are visible}.

We will let $\Ivis^m(K)$ denote set of {\bf $m$-visible inertia elements}, defined as
\[ \Ivis^m(K) = \bigcup_{v \ \text{$m$-visible}} \I_v^m. \]
Our primary main theorem shows that this set of visible inertia elements is $\varnothing$-definable in a suitable language of C-pairs.

\subsection{The Cancellation Principle}
We will need to work with a few auxiliary elements of $\Nbar$ which depend on $n$ and $\ell$.
For $n,r \in \Nbb$, we define:
\begin{enumerate}
	\item $\Mbf_r(n) := (r+1) \cdot n - r$.
	\item $\Nbf(n) := \Mbf_1((6 \cdot \ell^{3n-2}-7) \cdot (n-1) + 3n-2)$.
	\item $\Rbf(n) := \Nbf(\Mbf_2(\Mbf_1(n)))$.
\end{enumerate}
We extend these definitions to $\Nbar$ by setting $\Mbf_r(\infty) = \Nbf(\infty) = \Rbf(\infty) = \infty$, to keep the notation consistent.
On the other hand, it is particularly important to note that 
\[ \Rbf(1) = \Nbf(1) = \Mbf_r(1) = 1.\]
Even though our main theorems deal with an arbitrary $n \in \Nbar$, this observation shows that the statements of our main theorems can be made significantly less technical if one restricts to the case where $n \in \{1,\infty\}$.
In general, one has the following important inequality:
\[ n \leq \Mbf_1(n) \leq \Mbf_2(\Mbf_1(n)) \leq \Rbf(n). \]

The precise formula for $\Nbf$ will not play any role in this paper.
This formula comes from the technical proof of the ``Main Theorem of C-pairs'' which appears in \cite[Theorem 3]{Topaz2012c}, and which is summarized in this paper as Theorem \ref{theorem: main theorem of C-pairs}.
It is important to note that we do not expect $\Nbf$ as above to be optimal.
Because of this, this paper has been written in such a way so that Theorem \ref{theorem: main theorem of C-pairs} is used solely as a black box, in order to account for possible future refinements of $\Nbf$.

On the other hand, the precise formula for $\Mbf_r(n)$ will be important because we will use the following ``cancellation principle'' extensively.
\begin{fact}[The Cancellation Principle]
\label{fact: cancellation}
	Let $r$ be a positive integer, and let $R \geq \Mbf_r(n)$ be given.
	Suppose that $c_1,\ldots,c_r \in \Lambda_R$ are given elements such that $(c_i)_n \neq 0$ for $i = 1,\ldots,r$, and suppose that $a,b \in \Lambda_R$ are such that $a \cdot c_1 \cdots c_r = b \cdot c_1 \cdots c_r$. 
	Then one has $a_n = b_n$.
\end{fact}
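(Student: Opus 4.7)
The statement is a purely arithmetic fact about the ring $\Lambda_R = \Z/\ell^R$ (the $R=\infty$ case reduces to the fact that $\Z_\ell$ is a domain, since all $c_i$ are nonzero). So I would focus on the finite case and handle it by tracking $\ell$-adic valuations.

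The plan is the following. First, I would exploit the hypothesis $(c_i)_n \neq 0$: this precisely says that the $\ell$-adic valuation of the lift of $c_i$ (equivalently, the largest power of $\ell$ dividing $c_i$ in $\Lambda_R$) is strictly less than $n$. So I would write $c_i = \ell^{k_i} u_i$ with $0 \leq k_i \leq n-1$ and $u_i \in \Lambda_R^\times$, using that $R \geq \Mbf_r(n) \geq n > k_i$ to guarantee $u_i$ is genuinely a unit and not absorbed into the zero class.

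Next, I would multiply: $c_1 \cdots c_r = \ell^{k_1 + \cdots + k_r} \cdot u$ where $u = u_1 \cdots u_r$ is a unit and $k_1 + \cdots + k_r \leq r(n-1) = rn - r$. The assumed equality $a \cdot c_1 \cdots c_r = b \cdot c_1 \cdots c_r$ then becomes $(a-b) \cdot \ell^{k_1 + \cdots + k_r} \cdot u = 0$ in $\Lambda_R$. Multiplying by $u^{-1}$, this forces $(a-b) \cdot \ell^{k_1 + \cdots + k_r} = 0$ in $\Z/\ell^R$, i.e., $\ell^{R - (k_1 + \cdots + k_r)}$ divides $a-b$.

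Finally, I would use the specific form of $\Mbf_r(n) = (r+1)n - r$. Since $R \geq \Mbf_r(n)$ and $\sum k_i \leq rn - r$, we get
\[ R - (k_1 + \cdots + k_r) \;\geq\; \Mbf_r(n) - (rn - r) \;=\; (r+1)n - r - rn + r \;=\; n, \]
so $\ell^n \mid (a - b)$, which is exactly $a_n = b_n$. The entire argument is straightforward; there is no real obstacle, and the proof reduces to observing that the definition of $\Mbf_r(n)$ is tailored exactly to provide the slack needed to absorb the total $\ell$-adic valuation of the product $c_1 \cdots c_r$ and still leave $n$ free powers of $\ell$ at the end.
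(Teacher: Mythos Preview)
Your proof is correct. The paper states this as a Fact without proof, so there is no argument to compare against; your $\ell$-adic valuation bookkeeping is exactly the intended verification, and in particular your computation $R - \sum k_i \geq \Mbf_r(n) - r(n-1) = n$ makes transparent why $\Mbf_r(n)$ is defined as $(r+1)n - r$.
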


\subsection{The Language of C-pairs}
\label{subsection: language of C-pairs}
Since our goal will be to speak about definable sets of $\gfrak^n(K)$, we need to introduce the language which we consider.
First, for $m \in \Nbar$, we consider the structure $(\gfrak^m(K);\Cc^m)$ defined as follows:
\begin{enumerate}
	\item $\gfrak^m(K)$ is endowed with the usual structure of a group; i.e. the underlying language has a constant $0$ for the additive identity and a binary function $+$ for addition.
	\item $\Cc^m$ is a binary relation on $\gfrak^m(K)$, which is interpreted as: $(\sigma,\tau) \in \Cc^m$ if and only if $(\sigma,\tau)$ is a C-pair.
\end{enumerate}
For $n,m \in \Nbar$ such that $n \leq m$, we will also consider the two-sorted structure 
\[ (\gfrak^m(K),\gfrak^n(K); \Cc^m, \Cc^n,\pi) \]
defined as follows
\begin{enumerate}
	\item $(\gfrak^*(K);\Cc^*)$ is as defined above for $* = n,m$.
	\item $\pi : \gfrak^m(K) \rightarrow \gfrak^n(K)$ is the function $\sigma\mapsto \sigma_n$.
\end{enumerate}

\subsection{Main Theorems -- Defining Inertia}
\label{susbection: intro / defining inertia}
We are now prepared to state the main theorems of this paper which concern the definability of minimized inertia elements and minimized inertia groups of visible valuations.

\begin{maintheorem}
\label{maintheorem: definability of visible inertia set}
	Let $n \in \Nbar$ and $N \geq \Rbf(n)$ be given.
	Let $K$ be a field such that $\mu_{2\ell^N} \subset K$.
	For elements $\sigma \in \gfrak^n(K)$, the following are equivalent:
	\begin{enumerate}
		\item One has $\sigma \in \Ivis^n(K)$, i.e. there exists an $n$-visible valuation $v$ of $K$ such that $\sigma \in \I_v^n$.
		\item There exist $\tau_1,\tau_2 \in \gfrak^n(K)$ and $\sigma',\tau_1',\tau_2' \in \gfrak^N(K)$ such that the following conditions hold:
		\begin{enumerate}
			\item $(\tau_1,\tau_2)$ is not a C-pair.
			\item $\sigma',\tau_1',\tau_2'$ are $N$-lifts of $\sigma,\tau_1,\tau_2$.
			\item $(\sigma',\tau_1')$ and $(\sigma',\tau_2')$ are both C-pairs.
		\end{enumerate}
	\end{enumerate}
	In particular, the set $\Ivis^n(K)$ of $n$-visible inertia elements is $\varnothing$-definable in the two-sorted structure $(\gfrak^N(K),\gfrak^n(K);\Cc^N,\Cc^n,\pi)$.
\end{maintheorem}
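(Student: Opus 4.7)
The plan is to prove the equivalence $(1) \Leftrightarrow (2)$ directly; definability then follows because the formula in $(2)$ is manifestly expressible in the two-sorted language $(\gfrak^N(K),\gfrak^n(K);\Cc^N,\Cc^n,\pi)$. Direction $(1) \Rightarrow (2)$ is a construction, while $(2) \Rightarrow (1)$ carries the bulk of the work and invokes the Main Theorem of C-pairs together with the Cancellation Principle.

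For $(1) \Rightarrow (2)$, assume $\sigma \in \I_v^n$ for an $n$-visible $v$. First produce an $N$-lift $\sigma' \in \I_v^N$, which is possible because $K^\times/U_v$ injects into $vK$, and by (V1) this has no nontrivial $\ell$-divisible convex subgroups, so the reduction $\I_v^N \to \I_v^n$ is surjective. By (V2), choose $\bar\tau_1,\bar\tau_2 \in \gfrak^n(Kv)$ which do not form a C-pair. Pull them back through the natural surjection $\D_v^n \twoheadrightarrow \gfrak^n(Kv)$ to elements $\tau_1,\tau_2 \in \D_v^n$ and further to $N$-lifts $\tau_1',\tau_2' \in \D_v^N$. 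Condition (c) of $(2)$ follows from the standard fact that minimized inertia elements form C-pairs with all minimized decomposition elements. Condition (a) is obtained because the C-pair relation on $\D_v^n$ descends faithfully to $\gfrak^n(Kv)$: if $(\tau_1,\tau_2)$ were a C-pair in $\gfrak^n(K)$, so would be the residues $(\bar\tau_1,\bar\tau_2)$.

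For $(2) \Rightarrow (1)$, the strategy is to extract a visible valuation from the hypothesized C-pair data. Enlarge $\langle \sigma',\tau_i'\rangle_{\Lambda_N}$ inside $\gfrak^N(K)$ to a maximal C-subgroup $\Dfrak_i$, for $i=1,2$. The Main Theorem of C-pairs (Theorem \ref{theorem: main theorem of C-pairs}), applicable because $N \geq \Rbf(n) = \Nbf(\Mbf_2(\Mbf_1(n)))$ exceeds the threshold $\Nbf$, identifies the appropriate projection of each $\Dfrak_i$ with the minimized decomposition group $\D_{v_i}$ of a valuation $v_i$ of $K$. Since $\sigma' \in \Dfrak_1 \cap \Dfrak_2$, this places $\sigma$ in $\D_{v_1}^n \cap \D_{v_2}^n$. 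The fact that $(\tau_1,\tau_2)$ is not a C-pair at level $n$ prevents $v_1$ and $v_2$ from admitting a common refinement whose decomposition at level $n$ supports both $\tau_i$; consequently the finest common coarsening $v := v_1 \wedge v_2$ is a strict coarsening of each $v_i$, and the images of $\tau_i'$ are nontrivial in the residue of $v$. Combining this with the Cancellation Principle (Fact \ref{fact: cancellation}) applied to the relations coming from the two C-pair hypotheses on $\sigma'$, one deduces that $\sigma$ must already lie in $\I_v^n$. For the visibility of $v$: (V2) is immediate, since the images of $\tau_1,\tau_2$ in $\gfrak^n(Kv)$ witness that it is not a C-set; (V1) can be arranged by passing, if necessary, to the coarsening of $v$ with maximal value group having no $\ell$-divisible convex subgroups (this does not shrink $\I_v^n$); and (V3) follows because any valuation $w$ of $Kv$ with $\D_w^n = \gfrak^n(Kv)$ would pull back to a refinement of $v$ through which the non-C-pair obstruction from $\tau_1,\tau_2$ could not survive, forcing $\I_v^n = 1$.

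The main obstacle will be the quantitative step showing that $\sigma \in \I_v^n$, not merely $\sigma \in \D_v^n$. This is precisely where the specific value $N \geq \Rbf(n) = \Nbf(\Mbf_2(\Mbf_1(n)))$ is essential: the outermost $\Nbf$ buys one application of the Main Theorem of C-pairs, while the composition $\Mbf_2\circ\Mbf_1$ provides exactly the budget needed to apply the Cancellation Principle twice in succession — once to transfer the C-pair relation $(\sigma',\tau_1')$ into decomposition-theoretic data at $v_1$, and again when comparing with $v_2$ to collapse decomposition onto inertia — so that all conclusions drawn at level $N$ descend without loss to the target level $n$. The verification of (V3), which involves showing that no auxiliary valuation of the residue field can render the constructed data spurious, is the most delicate remaining piece and relies on the maximality of the $\Dfrak_i$ in the construction.
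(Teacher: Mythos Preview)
Your $(1)\Rightarrow(2)$ is fine and matches the paper. The gap is in $(2)\Rightarrow(1)$, and it is structural rather than cosmetic.

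First, the Main Theorem of C-pairs (Theorem~\ref{theorem: main theorem of C-pairs}) is a statement about a \emph{pair}: from the C-pair $(\sigma',\tau_i')$ at level $N$ it produces a valuation $v_i$ with $\sigma'_M,(\tau_i')_M\in\D_{v_i}^M$ and $\langle\sigma'_M,(\tau_i')_M\rangle/(\langle\sigma'_M,(\tau_i')_M\rangle\cap\I_{v_i}^M)$ cyclic, where $M=\Mbf_2(\Mbf_1(n))$. It does \emph{not} identify a maximal C-subgroup with a decomposition group, so your $\Dfrak_i$ step is unsupported.

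More importantly, your direct construction $v:=v_1\wedge v_2$ cannot be made to yield $\sigma\in\I_v^n$. Nothing prevents $v_1,v_2$ from being independent (so $v$ is trivial), and even when they are comparable you have given no mechanism that forces $\sigma$ into inertia rather than merely decomposition. The paper's argument (its Key Lemma) is a \emph{proof by contradiction}: one assumes $\sigma'_m$ is \emph{non}-valuative, and it is precisely this hypothesis, via Lemma~\ref{lemma: non-valuative comparable}, that forces $v_1,v_2$ to be comparable. Once comparable (say $v_1$ coarser), the inertia elements $a_i\sigma'_M+b_i(\tau_i')_M\in\I_{v_i}^M$ together with $\sigma'_M\in\D_{v_2}^M$ assemble into a C-set containing $b_1(\tau_1')_M$ and $b_2(\tau_2')_M$; the non-valuativity of $\sigma'_m$ also forces $b_i\notin\ell^m\Lambda_M$, so two applications of Fact~\ref{fact: cancellation} (this is exactly the $\Mbf_2\circ\Mbf_1$ budget) give that $(\tau_1,\tau_2)$ is a C-pair --- the desired contradiction. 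Your sketch inverts the logic: you try to build $v$ first and then argue $\sigma\in\I_v^n$, but the comparability that makes everything work is only available \emph{after} assuming $\sigma$ is non-valuative.

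Finally, the visibility of the resulting valuation is not obtained by directly checking (V1)--(V3) on $v_\sigma$. The paper instead shows that $v_\sigma$ is a coarsening of an $n$-visible valuation: one passes to the residue field $Kv_\sigma$, reduces to level $n=1$, and invokes Lemmas~\ref{lemma: n equals 1} and~\ref{lemma: visible reduction to n equals 1} to manufacture a visible refinement $w_0\circ v_\sigma$. Your attempted verification of (V3) (``forcing $\I_v^n=1$'') is both the wrong conclusion (one needs $\I_w^n=1$ for the auxiliary $w$) and not actually argued.
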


\begin{maintheorem}
\label{maintheorem: definability of visible inertia groups}
	Let $n \in \Nbar$ and $N \geq \Rbf(n)$ be given.
	Let $K$ be a field such that $\mu_{2\ell^N} \subset K$.
	Let $\Sigma$ be any subset of $\gfrak^n(K)$.	
	Then the following conditions are equivalent:
	\begin{enumerate}
		\item There exists an $n$-visible valuation $v$ of $K$ such that $\Sigma \subset \I_v^n$.
		\item There exist $\tau_1,\tau_2 \in \gfrak^n(K)$ such that the following conditions hold true:
		\begin{enumerate}
			\item For all $\sigma,\tau \in \Sigma$, there exist $N$-lifts $\sigma',\tau'$ of $\sigma,\tau$ such that $(\sigma',\tau')$ form a C-pair.
			\item $(\tau_1,\tau_2)$ is not a C-pair.
			\item For all $\sigma \in \Sigma$, there exist $N$-lifts $\sigma',\tau_1',\tau_2'$ of $\sigma,\tau_1,\tau_2$ such that $(\sigma',\tau_1')$ and $(\sigma',\tau_2')$ are both C-pairs.
		\end{enumerate}
	\end{enumerate}
\end{maintheorem}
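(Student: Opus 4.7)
The plan is to establish the equivalence $(1) \Leftrightarrow (2)$ by combining Theorem~\ref{maintheorem: definability of visible inertia set} (Theorem A) with the structure theory of minimized inertia/decomposition groups and the Main Theorem of C-pairs.

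For the direction $(1) \Rightarrow (2)$, I would assume $\Sigma \subset \I_v^n$ for some $n$-visible valuation $v$. For clause (a), given $\sigma,\tau \in \Sigma$, lift both to elements $\sigma',\tau' \in \I_v^N$; such lifts exist because $\I_v^\bullet = \Hom(K^\times/U_v, \Lambda_\bullet)$ is functorial in the coefficient ring, with surjectivity obtained (if needed) by passing through $\I_v^\infty = \Hom(K^\times/U_v, \Z_\ell)$. The resulting pair is a C-pair by the standard principle that two elements of a minimized inertia group always form a C-pair. For clauses (b) and (c), I would invoke visibility condition (V2): since $\gfrak^n(Kv)$ is not a C-set, choose $\bar\tau_1,\bar\tau_2 \in \gfrak^n(Kv)$ failing the C-pair relation. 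Lifting these through the canonical surjection $\D_v^n \twoheadrightarrow \gfrak^n(Kv)$ yields $\tau_1,\tau_2 \in \D_v^n \subset \gfrak^n(K)$. The C-pair equation for $(\tau_1,\tau_2)$ evaluated at a $v$-unit $x$ with $1-x$ also a $v$-unit coincides with the C-pair equation for $(\bar\tau_1,\bar\tau_2)$ at $\bar x$, so its failure at the residue level produces a failure at $K$, giving (b). Further lifting $\tau_i$ to $\tau_i' \in \D_v^N$ and $\sigma$ to $\sigma' \in \I_v^N$, the inertia/decomposition pairing yields the required C-pairs $(\sigma',\tau_i')$ in~(c).

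For the direction $(2) \Rightarrow (1)$, the main direction, assume (a), (b), (c). Applying Theorem~\ref{maintheorem: definability of visible inertia set} to each individual $\sigma \in \Sigma$ with the fixed witnesses $(\tau_1,\tau_2)$ shows that each $\sigma$ lies in $\Ivis^n(K)$, i.e.\ in $\I_{v_\sigma}^n$ for some $n$-visible valuation $v_\sigma$ depending a priori on $\sigma$. The central task is to upgrade this pointwise statement into a single common valuation $v$ with $\Sigma \subset \I_v^n$. The glue is supplied by condition~(a): the uniform pairwise lifting property ensures that the union $\Sigma \cup \{\tau_1,\tau_2\}$ can be lifted to a genuine C-subset of $\gfrak^N(K)$ (not just pairwise lifted). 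By the Main Theorem of C-pairs (Theorem~\ref{theorem: main theorem of C-pairs}) this lifted C-set is contained in a single minimized decomposition group $\D_v^N$ for some valuation $v$ of $K$, and the portion corresponding to $\Sigma$ — which by (c) pairs compatibly with the non-C-pair $(\tau_1,\tau_2)$ used to witness non-triviality of the residue Galois structure — must in fact lie in $\I_v^N$. The choice of $v_\sigma$ from Theorem A can now be compared: since the valuation produced by Theorem A is governed by the common witnesses $(\tau_1,\tau_2)$ and their $N$-lifts rather than by $\sigma$ itself, the common decomposition group identified above must agree with each $v_\sigma$, forcing $v_\sigma = v$ and $v$ to be $n$-visible. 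Reducing modulo $\ell^n$ then places $\Sigma \subset \I_v^n$.

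The principal obstacle is the coherence step at the end of the reverse direction: controlling how the pointwise-constructed valuations $v_\sigma$ assemble into a single $v$. This is precisely where the Main Theorem of C-pairs enters crucially, promoting the simultaneous pairwise C-pair lifts guaranteed by (a) into a bona fide C-set, and a careful analysis is required to show that the construction in Theorem A depends on the witnesses $(\tau_1,\tau_2)$ in a uniform enough way that the valuation is determined independently of which $\sigma \in \Sigma$ is being tested.
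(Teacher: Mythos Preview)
Your direction $(1)\Rightarrow(2)$ is fine and matches the paper's argument.

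Your direction $(2)\Rightarrow(1)$ has a genuine gap. You assert that condition~(a) lets you lift $\Sigma\cup\{\tau_1,\tau_2\}$ to a \emph{genuine C-subset} of $\gfrak^N(K)$ and then invoke the Main Theorem of C-pairs to place this set inside a single $\D_v^N$. This fails on several counts. First, $(\tau_1,\tau_2)$ is explicitly \emph{not} a C-pair, so no lift of $\Sigma\cup\{\tau_1,\tau_2\}$ can be a C-set. Second, condition~(a) only provides, for each pair $\sigma,\tau\in\Sigma$, \emph{some} $N$-lifts forming a C-pair; the lifts may change from pair to pair, and nothing in the hypotheses produces a single coherent lift of all of $\Sigma$ to a C-set. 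Third, Theorem~\ref{theorem: main theorem of C-pairs} is a statement about a single pair $(\sigma,\tau)$, not about placing an arbitrary C-set inside one decomposition group. Finally, the valuation produced in Theorem~\ref{maintheorem: definability of visible inertia set} genuinely depends on $\sigma$ (it is $v_\sigma$, refined to something $n$-visible), not only on the witnesses $\tau_1,\tau_2$.

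The paper's route avoids these problems by working with the canonical valuation $v_\sigma$ attached to each valuative element (Lemma~\ref{lemma: valuative}). The Key Lemma (Lemma~\ref{lemma: key lemma}) shows each $\sigma\in\Sigma$ is valuative and that $\tau_1,\tau_2\in\D_{v_\sigma}^n$. Condition~(a) is then used only \emph{pairwise}, via Lemma~\ref{lemma: C-pair implies comparable}, to show that the family $(v_\sigma)_{\sigma\in\Sigma}$ is totally ordered by coarsening; one then takes the valuation-theoretic supremum $v_\Sigma$ (Lemma~\ref{lemma: supremum}), which satisfies $\Sigma\subset\I_{v_\Sigma}^n$ and $\tau_1,\tau_2\in\D_{v_\Sigma}^n$. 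The last step is to refine $v_\Sigma$ to an $n$-visible valuation using Lemmas~\ref{lemma: n equals 1} and~\ref{lemma: visible reduction to n equals 1}, with the presence of the non-C-pair $\tau_1,\tau_2$ in $\D_{v_\Sigma}^n$ ruling out the degenerate case. The missing idea in your argument is precisely this comparability-plus-supremum mechanism for gluing the $v_\sigma$ into one valuation.
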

Theorem \ref{maintheorem: definability of visible inertia groups} will be primarily used in Theorem \ref{maintheorem: definability of visible decomposition groups} below as a technical condition for reconstructing the minimized inertia and decomposition groups of $n$-visible valuations.

\subsection{Main Theorem -- Defining Decomposition}
Our final main theorem will show how to reconstruct the minimized inertia and decomposition groups of $n$-visible valuations in an effective way.
We first need to introduce some technical notation which will be used in the statement of the theorem.
Let $m,n \in \Nbar$ be such that $n \leq m$.
For a subset $\Sigma$ of $\gfrak^n(K)$, we define two subsets $\D^m_n(\Sigma)$ and $\I^m_n(\Sigma)$ as follows.
\begin{enumerate}
	\item $\D^m_n(\Sigma)$ consists of all elements $\tau \in \gfrak^n(K)$ which satisfy the following condition: For all $\sigma \in \Sigma$, there exist $\tau_1,\tau_2 \in \gfrak^n(K)$, and $N$-lifts $\sigma',\tau',\tau_1',\tau_2'$ of $\sigma,\tau,\tau_1,\tau_2$ such that the following hold:
	\begin{enumerate}
		\item $(\tau_1,\tau_2)$ is not a C-pair.
		\item $(\sigma',\tau')$, $(\sigma',\tau_1')$ and $(\sigma',\tau_2')$ are all C-pairs.
	\end{enumerate}
	\item $\I^m_n(\Sigma)$ consists of all elements $\sigma \in \Sigma$ which satisfy the following condition: There exists an $m$-lift $\sigma'$ of $\sigma$ such that, for all $\tau \in \Sigma$, there exists an $m$-lift $\tau'$ of $\tau$, such that $(\sigma',\tau')$ is a C-pair.
\end{enumerate}

In the notation above, we will usually consider subsets $\Sigma$ of $\gfrak^n(K)$ as sets of \emph{parameters}, in order to construct the associated sets $\D^m_n(\Sigma)$ and $\I^m_n(\D^m_n(\Sigma))$.
In particular, if $\Sigma \subset \gfrak^n(K)$ is definable (with parameters, e.g. $\Sigma$ is finite) in the two-sorted structure $(\gfrak^m(K),\gfrak^n(K);\Cc^m,\Cc^n,\pi)$, then the two corresponding sets 
\[ \I^m_n(\D^m_n(\Sigma)) \subset \D^m_n(\Sigma) \]
are clearly definable as well.

\begin{remark}
\label{remark: centers centralizers}
	The precise definition of $\D^m_n$ and $\I^m_n$ is very technical primarily due to the fact that one needs to choose $m$-lifts of elements of $\gfrak^n(K)$.
	In the case where $n = m$, the situation becomes much simpler.
	Indeed, if $\Sigma$ satisfies the equivalent conditions of Theorem \ref{maintheorem: definability of visible inertia groups} (this will be an assumption in Theorem \ref{maintheorem: definability of visible decomposition groups}), then $\D^n_n(\Sigma)$ is precisely the set
	\[ \{\tau \in \gfrak^n(K) \ : \ \text{For all $\sigma \in \Sigma$, $(\sigma,\tau)$ is a C-pair}\}.\]
	Namely, $\D^n_n(\Sigma)$ is the ``C-centralizer'' of $\Sigma$.

	Similarly, for an arbitrary subset $\Sigma$ of $\gfrak^n(K)$, $\I^n_n(\Sigma)$ is precisely the set
	\[ \{\sigma \in \Sigma \ : \ \text{For all $\tau \in \Sigma$, $(\sigma,\tau)$ is a C-pair}\}.\]
	Namely, $\I^n_n(\Sigma)$ is the ``C-center'' of $\Sigma$.
\end{remark}

With this technical notation, we are finally prepared to state the main theorem concerning reconstructing the minimized inertia and decomposition groups of $n$-visible valuations.

\begin{maintheorem}
\label{maintheorem: definability of visible decomposition groups}
	Let $n \in \Nbar$ and $N \geq \Rbf(n)$ be given.
	Let $K$ be a field such that $\mu_{2\ell^N} \subset K$.
	Then the following hold:
	\begin{enumerate}
		\item Let $\Sigma$ be a subset of $\gfrak^n(K)$ which satisfies the equivalent conditions of Theorem \ref{maintheorem: definability of visible inertia groups}.
		Then there exists an $n$-visible valuation $v$ of $K$ such that 
		\[ \D^N_n(\Sigma) = \D_v^n \ \text{and} \ \I^N_n(\D^N_n(\Sigma)) = \I_v^n. \]
		Moreover, in this case one has $\Sigma \subset \I^N_n(\D^N_n(\Sigma))$.
		\item Conversely, if $v$ is an $n$-visible valuation and $\Sigma \subset \I_v^n$ is any subset such that $v(\Sigma^\perp)$ contains no non-trivial convex subgroups, then one has 
		\[ \D^N_n(\Sigma) = \D_v^n  \ \text{and} \ \I^N_n(\D^N_n(\Sigma)) = \I_v^n. \]
	\end{enumerate}
\end{maintheorem}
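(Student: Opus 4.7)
The plan is to use the Main Theorem of C-pairs (Theorem \ref{theorem: main theorem of C-pairs}) at lifting level $N \geq \Rbf(n)$ as a black-box translation device between the combinatorial C-pair conditions built into $\D^N_n(\Sigma)$ and $\I^N_n$ and honest valuation-theoretic statements about inertia and decomposition of $n$-visible valuations. First I would reduce part (1) to part (2) by a coarsening argument: Theorem \ref{maintheorem: definability of visible inertia groups} furnishes an $n$-visible valuation $v_0$ with $\Sigma \subset \I_{v_0}^n$, and I would pass to the coarsening $v$ of $v_0$ corresponding to the convex hull of $v_0(\Sigma^\perp)$ inside $v_0 K$. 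By construction $v(\Sigma^\perp)$ has no non-trivial convex subgroup, and $\Sigma \subset \I_v^n$ since inertia groups grow under coarsening; a small separate argument is needed to verify that $v$ remains $n$-visible, using that $v_0$ is.

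For part (2), I would establish $\D^N_n(\Sigma) = \D_v^n$ by two inclusions. The easy direction $\D_v^n \subset \D^N_n(\Sigma)$ proceeds as follows: for $\tau \in \D_v^n$ lift to $\tau' \in \D_v^N$; for any $\sigma \in \Sigma \subset \I_v^n$ lift to $\sigma' \in \I_v^N$, so $(\sigma',\tau')$ is automatically a C-pair. Condition (V2) applied to $Kv$ then yields $\tau_1, \tau_2 \in \gfrak^n(Kv)$ which are not a C-pair, and lifting their pullbacks into $\D_v^N$ supplies the remaining data required by the definition. The opposite, substantive inclusion $\D^N_n(\Sigma) \subset \D_v^n$ proceeds by applying the Main Theorem of C-pairs to the configuration associated to each $\sigma \in \Sigma$: one obtains an $n$-visible valuation $w_\sigma$ of $K$ with $\sigma' \in \I_{w_\sigma}^N$ and $\tau', \tau_1', \tau_2' \in \D_{w_\sigma}^N$, and hence $\tau \in \D_{w_\sigma}^n$.

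The main obstacle is then a valuation-theoretic synthesis step: one must argue that the whole family $\{w_\sigma\}_{\sigma \in \Sigma}$ is compatible with $v$ in a sufficiently precise way to force $\tau \in \D_v^n$. Because every $\sigma \in \Sigma$ lies in $\I_v^n \cap \I_{w_\sigma}^n$, standard comparison theorems for valuations with common inertia elements place $v$ and each $w_\sigma$ into a predictable partial order along their value groups, and the hypothesis that $v(\Sigma^\perp)$ has no non-trivial convex subgroup is exactly what prevents a strict common coarsening from creeping in, forcing $\tau \in \bigcap_\sigma \D_{w_\sigma}^n$ to already lie in $\D_v^n$. I expect this bookkeeping, combined with the simultaneous handling of all $\sigma \in \Sigma$ via the cancellation principle (Fact \ref{fact: cancellation}), to be precisely where the numerical bound $\Rbf(n)$ is needed rather than the weaker bound $\Nbf(n)$ coming from the Main Theorem of C-pairs itself.

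For the final equality $\I^N_n(\D^N_n(\Sigma)) = \I_v^n$, having established $\D^N_n(\Sigma) = \D_v^n$, I would use the characterization of $\I_v^n$ as the set of $\sigma \in \D_v^n$ admitting an $N$-lift whose C-centralizer contains an $N$-lift of every element of $\D_v^n$: the forward direction uses the standard C-commutation of inertia and decomposition lifts at level $N$ (cf.\ Remark \ref{remark: centers centralizers}), and the reverse direction is another application of the Main Theorem of C-pairs, now applied to elements of $\D_v^n$ in place of $\Sigma$. The inclusion $\Sigma \subset \I^N_n(\D^N_n(\Sigma))$ claimed in part (1) is then tautological given $\Sigma \subset \I_v^n$, completing the proof.
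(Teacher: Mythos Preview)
Your reduction of part (1) to part (2) has a genuine gap: the coarsening $v := v_\Sigma$ of the $n$-visible valuation $v_0$ furnished by Theorem \ref{maintheorem: definability of visible inertia groups} need not itself be $n$-visible, because condition (V3) can fail for it. Concretely, take $L = \overline{\mathbb F_p}(y)$ with $p \neq \ell$, set $K = L((t))((s))$, and let $\Sigma = \{\sigma\}$ with $\sigma$ the $s$-adic inertia generator. Then $v_\Sigma$ is the $s$-adic valuation, with residue field $L((t))$; the $t$-adic valuation $w_0$ of $L((t))$ is Henselian, so $\D_{w_0}^n = \gfrak^n(L((t)))$, yet $\I_{w_0}^n \cong \Lambda_n \neq 0$. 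Hence $v_\Sigma$ fails (V3). Meanwhile the rank-$2$ valuation $v_1$ associated to the pair $(s,t)$ \emph{is} $n$-visible and has $\sigma \in \I_{v_1}^n$, so $\Sigma$ does satisfy the hypotheses of Theorem \ref{maintheorem: definability of visible inertia groups}. The $n$-visible valuation whose existence is asserted in part (1) is in this example $v_1$, a proper \emph{refinement} of $v_\Sigma$; your ``small separate argument'' that visibility passes to the coarsening cannot exist. This is why the paper argues in the opposite order: it proves (1) directly by first showing $\D^N_n(\Sigma) = \D_{v_\Sigma}^n$, then setting $\I := \I^N_n(\D_{v_\Sigma}^n)$, showing $\I$ is valuative, and taking $v := v_{\I}$; the verification that \emph{this} $v$ satisfies (V3) is the real content. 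Part (2) is then deduced from part (1).

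There is a secondary imprecision in your sketch of $\D^N_n(\Sigma) \subset \D_v^n$. You claim the Main Theorem of C-pairs produces a single valuation $w_\sigma$ with $\sigma' \in \I_{w_\sigma}^N$ and $\tau',\tau_1',\tau_2' \in \D_{w_\sigma}^N$. Theorem \ref{theorem: main theorem of C-pairs} does not give this: applied to each of the three C-pairs separately it yields a priori three different valuations, and it only places $\sigma'$ in a decomposition group modulo a cyclic quotient, never directly in inertia. The paper instead packages the required synthesis as the Key Lemma (Lemma \ref{lemma: key lemma}): the non-C-pair $(\tau_1,\tau_2)$ together with the two C-pairs $(\sigma',\tau_i')$ forces $\sigma'_m$ to be \emph{valuative}, and then Lemma \ref{lemma: C-pair implies decomposition} gives $\tau \in \D_{v_\sigma}^n$. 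The passage from $\tau \in \bigcap_{\sigma \in \Sigma} \D_{v_\sigma}^n$ to $\tau \in \D_{v_\Sigma}^n$ is Lemma \ref{lemma: supremum}; no external comparison with a pre-given visible $v$ is needed at this stage.
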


\begin{remark}
	This remark concerns the existence of $\Sigma$ as in Theorem \ref{maintheorem: definability of visible decomposition groups}(2).
	If $v$ is an $n$-visible valuation of $K$ and $\Sigma$ is a generating set of $\I_v^n$, then $v(\Sigma^\perp) = \ell \cdot vK$ contains no non-trivial convex subgroups, because $vK$ contains no non-trivial $\ell$-divisible convex subgroups.
	
	In fact, in most situations which are of interest in anabelian geometry, there exists a \emph{single element} $\sigma \in \I_v^n$ such that $v(\ker\sigma)$ contains no non-trivial convex subgroups.
	For instance, if $vK \cong \Gamma \times \Z$ ordered lexicographically for some totally ordered abelian group $\Gamma$, then one can take $\sigma$ to be the composition:
	\[ \sigma : K^\times \xrightarrow{v} vK \cong \Gamma \times \Z \twoheadrightarrow \Z \xrightarrow{\rm canonical} \Lambda_n. \]
	In this case, it follows from Theorem \ref{maintheorem: definability of visible decomposition groups}(2) that $\I_v^n$ and $\D_v^n$ are definable in the two-sorted structure $(\gfrak^N(K),\gfrak^n(K);\Cc^N,\Cc^n,\pi)$ with \emph{one parameter} from $\gfrak^n(K)$.
\end{remark}

\subsection{Quasi-Divisorial Valuations}
\label{subsection: intro / quasi-divisorial valuations}

Now we assume that $K$ is a function field over an algebraically closed field $k$.
We say that $v$ is a {\bf quasi-divisorial valuation of $K|k$} if $v$ is a valuation of $K$ such that the following hold:
\begin{enumerate}
	\item $vK$ contains no non-trivial $\ell$-divisible convex subgroups.
	\item One has $vK/vk \cong \Z$ as abstract groups.
	\item One has $\trdeg(K|k)-1 = \trdeg(Kv|kv)$.
\end{enumerate}
Quasi-divisorial valuations were first introduced by {\sc Pop} \cite{Pop2010} in the context of the local theory for the Bogomolov-Pop conjecture.
This terminology comes about from the fact that a quasi-divisovial valuation $v$ is \emph{divisorial}, i.e. it arises from a Weil-prime-divisor on some normal model of $K|k$, if and only if $vk = 0$.
As noted above, it turns out that quasi-divisorial valuations are $n$-visible for all $n \in \Nbar$; see Lemma \ref{lemma: quasi-divisors are visible} for the details.

We will conclude the paper by adapting the methods from \cite{Pop2010} and \cite{Pop2011a} in two ways: first, to work with a general $n \in \Nbar$ and second, to work with the more general ``definable'' framework introduced above.
This is summarized as the following theorem.

\begin{maintheorem}
\label{maintheorem: quasi-prime-divisors}
	Let $n \in \Nbar$ and $N \geq \Rbf(n)$ be given.
	Let $K$ be a function field over an algebraically closed field $k$ such that $d := \trdeg(K|k) \geq 2$.
	Let $I \subset D \subset \gfrak^n(K)$ be two subsets.
	Then the following are equivalent:
	\begin{enumerate}
		\item There exist $\sigma_1,\ldots,\sigma_{d-1}\in \gfrak^n(K)$ such that the following hold:
		\begin{enumerate}
			\item $\{\sigma_1,\ldots,\sigma_{d-1}\}$ satisfies the equivalent conditions of Theorem \ref{maintheorem: definability of visible inertia groups}.
			\item $\langle \sigma_1,\ldots,\sigma_{d-1} \rangle_{\Lambda_n}$ has rank $d-1$.
			\item $\D^N_n(\sigma_1) = D$ and $\Lambda_n \cdot \sigma_1 = \I^N_n(D) = I$.
		\end{enumerate}
		\item There exists a quasi-divisorial valuation $v$ of $K|k$ such that $I = \I_v^n$ and $D = \D_v^n$.
	\end{enumerate}
\end{maintheorem}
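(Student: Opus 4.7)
The plan is to combine Theorems \ref{maintheorem: definability of visible inertia set}, \ref{maintheorem: definability of visible inertia groups}, and \ref{maintheorem: definability of visible decomposition groups} with the Abhyankar-type rank/transcendence-degree dictionary for quasi-divisorial valuations developed by {\sc Pop} in \cite{Pop2010} and \cite{Pop2011a}, adapted to the $n$-minimized setting and the definable framework constructed above.

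For the implication $(2)\Rightarrow(1)$, begin with a quasi-divisorial valuation $v$ of $K|k$. Since $k$ is algebraically closed, the subgroup $vk \subset vK$ is divisible, so the quotient $vK/vk \cong \Z$ produces a canonical generator $\sigma_1$ of $\I_v^n$ via the composition $K^\times \xrightarrow{v} vK \twoheadrightarrow \Z \to \Lambda_n$. The image $v(\ker\sigma_1) = vk + \ell^n \cdot vK$ contains no non-trivial convex subgroup of $vK$, as any such subgroup would project to a proper non-trivial convex subgroup of $\Z$; hence Theorem \ref{maintheorem: definability of visible decomposition groups}(2) applied to $\Sigma = \{\sigma_1\}$ yields $\D^N_n(\sigma_1) = \D_v^n = D$ and $\I^N_n(D) = \Lambda_n \cdot \sigma_1 = \I_v^n = I$. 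Next, refine $v$ to an $n$-visible valuation $v'$ of $K|k$ of maximal Abhyankar type---so that $v'K/v'k$ has rational rank $d$ and the residue extension $Kv'|kv'$ is algebraic---so that $\I_{v'}^n$ has $\Lambda_n$-rank $d$ and contains $\I_v^n$. Completing $\sigma_1$ to a $\Lambda_n$-independent sequence $\sigma_1,\ldots,\sigma_{d-1}$ inside $\I_{v'}^n$ then supplies the required witnesses: condition (a) follows from Theorem \ref{maintheorem: definability of visible inertia groups} applied to $v'$, condition (b) holds by construction, and condition (c) was established above.

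For $(1)\Rightarrow(2)$, apply Theorem \ref{maintheorem: definability of visible decomposition groups}(1) to $\Sigma = \{\sigma_1\}$: the hypothesis passes to the singleton from condition (a), and condition (c) identifies an $n$-visible valuation $w$ of $K$ with $\I_w^n = I$ and $\D_w^n = D$. It remains to verify that $w$ is quasi-divisorial as a valuation of $K|k$. Condition (a) together with Theorem \ref{maintheorem: definability of visible inertia groups} produces a visible valuation $v'$ with $\{\sigma_1,\ldots,\sigma_{d-1}\} \subset \I_{v'}^n$, and the relations $\sigma_1 \in \I_w^n \cap \I_{v'}^n$ and $\I_w^n = \Lambda_n \cdot \sigma_1$ force $v'$ to refine $w$. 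Condition (b) then forces $\rank_{\Lambda_n}\I_{v'}^n \geq d-1$, and the minimized analogue of Abhyankar's inequality---the key classical ingredient to adapt from Pop's work---gives $\rr(v'K/v'k) \geq d-1$ and $\trdeg(Kv'|kv') \leq 1$. A direct analysis of the coarsening $v' \rightsquigarrow w$ determined by cutting down to the rank-one inertia generated by $\sigma_1$ then shows $wK/wk \cong \Z$ and $\trdeg(Kw|kw) = d-1$, completing the verification that $w$ is quasi-divisorial.

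The main technical obstacle is the precise dictionary between the $\Lambda_n$-rank of a visible inertia group and the Abhyankar rank/transcendence-degree data of the corresponding valuation of $K|k$, together with the coarsening analysis guaranteeing that Abhyankar's inequality is saturated after restricting to the rank-one inertia $\I_w^n$. These computations form the technical heart of \cite{Pop2010} and \cite{Pop2011a}, and the bulk of the proof consists of porting those arguments into the $n$-minimized framework for arbitrary $n \in \Nbar$, with Theorems \ref{maintheorem: definability of visible inertia groups} and \ref{maintheorem: definability of visible decomposition groups} serving as the bridge between the Galois-theoretic and valuation-theoretic sides.
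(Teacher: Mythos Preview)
Your $(1)\Rightarrow(2)$ direction is essentially the paper's argument: apply Theorem~\ref{maintheorem: definability of visible decomposition groups} to $\{\sigma_1\}$ to get a visible valuation $w$ with $\I_w^n=I$ and $\D_w^n=D$, use Theorem~\ref{maintheorem: definability of visible inertia groups} on the full tuple to get a visible $v'$ refining $w$ whose inertia has $\Lambda_n$-rank $\geq d-1$, invoke the $\ell$-rank version of Abhyankar to force $v'$ to be without transcendence defect, and then coarsen to conclude $w$ is quasi-divisorial. One step you glossed over is why $v'$ must refine $w$; in the paper this is Lemma~\ref{lemma: valuative}, which identifies $w$ with $v_{\sigma_1}$ (the coarsest valuation whose inertia contains $\sigma_1$), so that any valuation with $\sigma_1$ in its inertia refines $w$.

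There is a genuine gap in your $(2)\Rightarrow(1)$. You propose to refine the quasi-divisorial $v$ to an $n$-visible valuation $v'$ ``of maximal Abhyankar type'' with $\rr(v'K/v'k)=d$ and $Kv'|kv'$ algebraic. But such a $v'$ is \emph{never} $n$-visible: since $k$ is algebraically closed, so is $kv'$, and $\trdeg(Kv'|kv')=0$ forces $Kv'=kv'$; then $(Kv')^\times$ is divisible, $\gfrak^n(Kv')=0$, and condition~(V2) fails. Consequently Theorem~\ref{maintheorem: definability of visible inertia groups} does not apply to $v'$, and you cannot verify condition~(a) this way. The paper's fix is to stop one step short: refine $v$ to an almost-$(d-1)$-quasi-divisorial valuation $w$ (e.g.\ by composing with an almost-$(d-2)$-quasi-divisorial valuation of $Kv|kv$). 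Then $\trdeg(Kw|kw)=1$, so $w$ is $n$-visible by Lemma~\ref{lemma: quasi-divisors are visible}, and $\I_w^n\cong\Lambda_n^{d-1}$ already has exactly the rank you need to choose $\sigma_1,\ldots,\sigma_{d-1}$.
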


\begin{remark}
\label{remark: quasi-divisorial}
	Let $K$ be a function field of transcendence degree $\geq 2$ over an algebraically closed field $k$, and let $n \in \Nbar$ and $N \geq \Rbf(n)$ be given.
	Consider the set $\mathfrak{I}_{\rm q.d.}^n(K|k)$ of generators of minimized inertia groups of quasi-divisorial valuations of $K|k$:
	\[ \mathfrak{I}_{\rm q.d.}^n(K|k) := \{ \sigma \in \gfrak^n(K) \ : \ \text{$\Lambda_n \cdot \sigma = \I_v^n$ for some quasi-divisorial valuation $v$ of $K|k$}\}. \]

	In the case where $n \neq \infty$, it follows immediately from Theorems \ref{maintheorem: definability of visible inertia set} and \ref{maintheorem: quasi-prime-divisors} that the set $\mathfrak{I}_{\rm q.d.}^n(K|k)$ is $\varnothing$-definable in the two-sorted structure $(\gfrak^N(K),\gfrak^n(K);\Cc^N,\Cc^n,\pi)$.
	Moreover, Theorem \ref{maintheorem: quasi-prime-divisors} implies that the $\ell^n$-minimized inertia and decomposition groups of quasi-divisorial valuations of $K|k$ are uniformly definable with one parameter in $\mathfrak{I}_{\rm q.d.}^n(K|k)$.

	A similar definability result also holds for $n = \infty$, after enlarging the language to encode finitely-generated $\Lambda_\infty$-submodules of $\gfrak^\infty(K)$.
	To be precise, we consider the structure $(\gfrak^\infty(K);\Cc^\infty,\Delta_r)_{r \in \Nbb}$ where $\Delta_r$ is an $(r+1)$-ary relation interpreted as 
	\[ \Delta_r(\sigma_1,\ldots,\sigma_r;\tau) \Longleftrightarrow \tau \in \langle\sigma_1,\ldots,\sigma_r \rangle_{\Lambda_\infty}.\]
	Then the set $\mathfrak{I}_{\rm q.d.}^\infty(K|k)$ is $\varnothing$-definable in this enriched structure, and the $\ell^\infty$-minimized inertia and decomposition groups of quasi-divisorial valuations of $K|k$ are uniformly definable in this structure with one parameter in $\mathfrak{I}_{\rm q.d.}^\infty(K|k)$.

	The (uniform) formulas used to define the minimized inertia and decomposition groups of quasi-divisorial valuations clearly depend on $d = \trdeg(K|k)$.
	In Theorem \ref{theorem: transcendence degree}, we give a simple recipe to recover $d = \trdeg(K|k)$ using the structure $(\gfrak^m(K);\Cc^m)$ if $m \neq \infty$, or using the enriched structure $(\gfrak^\infty(K);\Cc^\infty,\Delta_r)_{r \in \Nbb}$.
	More precisely, it follows immediately from Theorem \ref{theorem: transcendence degree} that $\trdeg(K|k)$ is an invariant of the first-order theory of $(\gfrak^m(K);\Cc^m)$ for $m \neq \infty$ resp. $(\gfrak^\infty(K);\Cc^\infty,\Delta_r)_{r \in \Nbb}$.
\end{remark}

\section{Minimized Decomposition Theory and C-pairs}
\label{section: Minimized Decomposition Theory}

In this section, we will recall the required facts concerning the connection between C-pairs and minimized decomposition theory.
Most of the lemmas in this section can be found, at least in some form, in the more comprehensive paper \cite{Topaz2012c}.
However, in order to keep the discussion as self contained as possible, we will provide some of the less technical proofs here, while referring to loc.cit. for some technical results.
Throughout this section $K$ will be an arbitrary field, unless otherwise specified.

\subsection{C-pair Structure of Minimized Decomposition Groups}
\label{subsection: mindec / C-pair structure}

Let $m \in \Nbar$ be given.
Suppose that $v$ is a valuation of $K$ and let $\sigma \in \D_v^m$ be a given element.
Recall that $\sigma$ is by definition a homomorphism
\[ \sigma : K^\times/U_v^1 \rightarrow \Lambda_m .\]
We let $\sigma_v$ denote the restriction of $\sigma$ to $Kv^\times = U_v/U_v^1$.
Thus, the map $\sigma \mapsto \sigma_v$ yields a canonical homomorphism
\[ \D_v^m \rightarrow \gfrak^m(Kv). \]
Our first lemma proves some compatibility properties of this canonical map.
\begin{lemma}
\label{lemma: residue C-pair}
	Let $m \in \Nbar$ be given, and let $(K,v)$ be a valued field.
	Furthermore, let $w$ be a valuation of $Kv$.
	Then the following hold:
	\begin{enumerate}
		\item The canonical map $\D_v^m \rightarrow \gfrak^m(Kv)$ induces a canonical isomorphism
		\[ \D_v^m/\I_v^m \xrightarrow{\cong} \gfrak^m(Kv). \]
		\item Identifying $\D_v^m/\I_v^m$ with $\gfrak^m(Kv)$ as in (1) above, one has 
		\[ \D_w^m = \D_{w\circ v}^m/\I_v^m, \ \ \I_w^m = \I_{w \circ v}^m/\I_v^m. \]
		\item Let $\sigma,\tau \in \D_v^m$ be given such that $\sigma(-1) = \tau(-1) = 0$.
		Then $(\sigma,\tau)$ is a C-pair in $\gfrak^m(K)$ if and only if $(\sigma_v,\tau_v)$ is a C-pair in $\gfrak^m(Kv)$.
	\end{enumerate}
\end{lemma}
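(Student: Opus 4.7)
The plan is to prove each part in sequence. For Part (1), I would apply $\Hom(-,\Lambda_m)$ to the canonical short exact sequence
\[
1 \to U_v/U_v^1 \to K^\times/U_v^1 \to vK \to 1
\]
induced by the valuation. An element $\sigma \in \D_v^m$ lies in the kernel of the restriction map $\D_v^m \to \gfrak^m(Kv) = \Hom(U_v/U_v^1,\Lambda_m)$ precisely when $\sigma$ factors through $K^\times/U_v$, so that kernel is exactly $\I_v^m$. For the surjectivity, the key inputs are that $vK$ is torsion-free (as a totally ordered abelian group, so $\operatorname{Tor}_1^{\Z}(vK,\Lambda_m)=0$) and that $\Lambda_m$ is self-injective for $m < \infty$; the case $m = \infty$ would follow by passing to an inverse limit, since the resulting system of finite-level surjections is Mittag--Leffler.

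Part (2) is essentially a definition chase. Since the valuation ring of the composite is $\Oc_{w \circ v} = \pi_v^{-1}(\Oc_w)$, one has $U_v^1 \subseteq U_{w \circ v}^1 \subseteq U_{w \circ v} \subseteq U_v$, and under the identification $U_v/U_v^1 \cong (Kv)^\times$ from Part~(1), these inclusions correspond exactly to $U_w^1 \subseteq U_w \subseteq (Kv)^\times$. The equalities $\D_{w \circ v}^m / \I_v^m = \D_w^m$ and $\I_{w \circ v}^m / \I_v^m = \I_w^m$ then follow at once by unwinding what it means for a homomorphism to vanish on each of these groups.

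For Part (3), the forward direction is easy: given $y \in Kv \setminus \{0,1\}$, any lift $x \in U_v$ with $\bar x = y$ satisfies $1-x \in U_v$ with $\overline{1-x} = 1-y$, so the C-pair relation at $x$ descends under $\pi_v$ to the C-pair relation at $y$. For the converse, I would verify the C-pair identity at every $x \in K \setminus \{0,1\}$ by cases on $v(x)$. When $v(x)=0$ and $\bar x \neq 1$, the identity at $x$ is the lift of the C-pair relation at $\bar x \in Kv\setminus\{0,1\}$, which holds by hypothesis. When $v(x)>0$, one has $1-x \in U_v^1$, so $\sigma(1-x)=\tau(1-x)=0$ and the identity holds trivially; similarly when $v(x)=0$ and $\bar x = 1$, one has $x \in U_v^1$, so $\sigma(x)=\tau(x)=0$. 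Finally, when $v(x)<0$, writing $1-x = (-1) \cdot x \cdot (1-x^{-1})$ with $1-x^{-1} \in U_v^1$, and invoking the hypothesis $\sigma(-1)=\tau(-1)=0$, one obtains $\sigma(1-x)=\sigma(x)$ and $\tau(1-x)=\tau(x)$, so both sides of the C-pair identity coincide.

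The main obstacle I anticipate is the surjectivity in Part (1) when $m=\infty$, where $\Z_\ell$ is no longer self-injective and the argument must instead pass through an inverse limit. Beyond this, Parts (2) and (3) are mostly bookkeeping; the only subtle point in Part (3) is recognizing that the hypothesis $\sigma(-1)=\tau(-1)=0$ is precisely the ingredient needed to handle $v(x)<0$ via the substitution $1-x = (-1) \cdot x \cdot (1-x^{-1})$.
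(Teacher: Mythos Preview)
Your proposal is correct and follows essentially the same route as the paper: the same short exact sequence and Pontryagin/$\Hom(-,\Lambda_m)$ argument for (1), the same definition chase via the tower $U_v^1 \subset U_{w\circ v}^1 \subset U_{w\circ v} \subset U_v$ for (2), and the same case analysis on $v(x)$ for (3), with your treatment of $v(x)<0$ via $1-x = -x(1-x^{-1})$ being just a rephrasing of the paper's observation that $1-x \in (-x)\cdot U_v^1$.
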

\begin{proof}
	We will assume that $m \neq \infty$, since the $m = \infty$ case would follow from the $m \neq \infty$ case by taking limits.

	\vskip 10pt
	\noindent\emph{Proof of (1).}
	Consider the following canonical short exact sequence:
	\[ 1 \rightarrow Kv^\times \rightarrow K^\times/U_v^1 \rightarrow vK \rightarrow 1.\]
	Since $vK$ is torsion-free, we obtain an induced short exact sequence by tensoring with $\Lambda_m$:
	\[ 1 \rightarrow Kv^\times/\ell^m \rightarrow K^\times/(K^{\times\ell^m} \cdot U_v^1) \rightarrow vK/\ell^m \rightarrow 1. \]
	Assertion (1) follows from Pontryagin duality by applying the functor $\Hom(\bullet,\Lambda_m)$ to this short exact sequence.

	\vskip 10pt
	\noindent\emph{Proof of (2).}
	The proof of assertion (2) follows in essentially the same way as  the proof of assertion (1), by considering the following two short exact sequences:
	\[ 1 \rightarrow Kv^\times/U_w^1 \rightarrow K^\times/U_{w \circ v}^1 \rightarrow vK \rightarrow 1 \]
	and 
	\[ 1 \rightarrow wK \rightarrow (w \circ v)K \rightarrow vK \rightarrow 1. \]

	\vskip 10pt
	\noindent\emph{Proof of (3).}
	If $(\sigma,\tau)$ is a C-pair, then clearly $(\sigma_v,\tau_v)$ is a C-pair as well.
	Conversely, assume that $(\sigma_v,\tau_v)$ is a C-pair, and let $x \in K \smallsetminus \{0,1\}$ be given.
	We will consider several cases, based on the values of $v(x)$ and $v(1-x)$.

	\vskip 10pt
	\noindent\underline{\bf Case $v(x) > 1$}:
	In this case, one has $1-x \in U_v^1 \subset \ker\sigma \cap \ker\tau$.
	Therefore, one has 
	\[ \sigma(x)\tau(1-x) = 0 = \sigma(1-x)\tau(x). \]

	\vskip 10pt
	\noindent\underline{\bf Case $v(x) < 1$}:
	In this case, one has $1-x \in (-x) \cdot U_v^1$ so that $\sigma(1-x) = \sigma(-x) = \sigma(x)$ and $\tau(1-x) = \tau(-x)=\tau(x)$.
	Therefore, one has 
	\[ \sigma(x)\tau(1-x) = \sigma(x)\tau(x) = \sigma(1-x)\tau(x). \]
	
	\vskip 10pt
	\noindent\underline{\bf Case $v(x) = 0$ and $v(1-x)>0$}:
	In this case, one has $x \in U_v^1 \subset \ker\sigma \cap \ker\tau$.
	Therefore, one has 
	\[ \sigma(x) \tau(1-x) = 0 = \sigma(1-x)\tau(x). \]

	\vskip 10pt
	\noindent\underline{\bf Case $v(x) = v(1-x) = 0$}:
	Let $z \mapsto \bar z$ denote the canonical map $U_v \rightarrow Kv^\times$.
	Since $(\sigma_v,\tau_v)$ is a C-pair, one has 
	\[ \sigma(x)\tau(1-x) = \sigma_v(\bar x)\tau_v(\bar 1-\bar x) = \sigma_v(\bar 1-\bar x)\tau_v(\bar x) = \sigma(1-x)\tau(x). \]
	
	In any case, we see that for all $x \in K \smallsetminus \{0,1\}$, one has 
	\[ \sigma(x)\tau(1-x) = \sigma(1-x)\tau(x) \]
	thus $(\sigma,\tau)$ is a C-pair, as required. 
\end{proof}

\subsection{Existence of Lifts}
\label{susbection: mindec / existence of lifts}

Suppose now that $m,n \in \Nbar$ are such that $n \leq m$.
It is easy to see that the canonical map $\gfrak^m(K) \rightarrow \gfrak^n(K)$ restricts to compatible maps
\[ \D_v^m \rightarrow \D_v^n, \ \ \I_v^m \rightarrow \I_v^n. \]
Our next lemma shows that these maps are all surjective in a fairly strong sense.
\begin{lemma}
\label{lemma: m-lifts}
	Let $m,n \in \Nbar$ be given such that $n \leq m$.
	Let $K$ be a field such that $\mu_{\ell^m} \subset K$, and let $v$ be a valuation of $K$.
	Then the following hold:
	\begin{enumerate}
		\item The canonical map $\gfrak^m(K) \rightarrow \gfrak^n(K)$ is surjective.
		\item The two pro-$\ell$ groups $\gfrak^m(K)$ and $\gfrak^n(K)$ have the same rank (as pro-$\ell$ groups).
		\item The canonical maps $\D_v^m \rightarrow \D_v^n$ and $\I_v^m \rightarrow \I_v^n$ are surjective.
		\item One has $\I_v^m = 1$ if and only if $\I_v^n = 1$.
		\item One has $\D_v^m = \gfrak^m(K)$ if and only if $\D_v^n = \gfrak^n(K)$.
	\end{enumerate}
\end{lemma}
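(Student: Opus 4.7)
The plan is to deduce all five assertions from a single structural fact: the assumption $\mu_{\ell^m} \subset K$ forces the abelian group $M := K^\times/K^{\times \ell^m}$ to be a \emph{free} $\Lambda_m$-module. To see this, suppose $\bar x \in M$ satisfies $\ell^k \bar x = 0$, so that $x^{\ell^k} = y^{\ell^m}$ for some $y \in K^\times$. Then $\zeta := x y^{-\ell^{m-k}}$ lies in $\mu_{\ell^k} \subset K$, and because $\mu_{\ell^m} \subset K$ we may write $\zeta = \eta^{\ell^{m-k}}$ for some $\eta \in \mu_{\ell^m}$. Hence $x = (\eta y)^{\ell^{m-k}}$, proving $M[\ell^k] = \ell^{m-k} M$ for every $k \in \{0,\ldots,m\}$. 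This identity is precisely Baer's criterion for $M$ to be injective over the self-injective local Artinian ring $\Lambda_m$; since $\Lambda_m$ is quasi-Frobenius, injective modules are also projective, and over a local ring projective modules are free by Kaplansky's theorem. Hence $M \cong \bigoplus_I \Lambda_m$ for some indexing set $I$.

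Parts (1) and (2) then follow immediately: $\gfrak^m(K) = \Hom(M,\Lambda_m) \cong \prod_I \Lambda_m$ and $\gfrak^n(K) \cong \prod_I \Lambda_n$, with the reduction map $\sigma \mapsto \sigma_n$ realized as the coordinate-wise projection $\prod_I \Lambda_m \twoheadrightarrow \prod_I \Lambda_n$; both groups have pro-$\ell$ rank $|I|$. For (3) I apply the same technology to the relevant quotients of $K^\times$. Tensoring the canonical sequence $1 \to Kv^\times \to K^\times/U_v^1 \to vK \to 1$ with $\Lambda_m$ is exact because $vK$ is torsion-free, so $K^\times/(U_v^1 K^{\times \ell^m})$ sits in a short exact sequence with ends $Kv^\times/\ell^m$ and $vK/\ell^m$. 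The first is free by (1) applied to the residue field $Kv$ (noting $\mu_{\ell^m} \subset Kv$ in every characteristic, trivially when $\Char Kv = \ell$), and the second is free by the same Baer-criterion argument applied to the torsion-free group $vK$; the sequence splits by projectivity of $vK/\ell^m$, so the middle term is also free, whence $\D_v^m \cong \prod_J \Lambda_m$ surjects onto $\D_v^n \cong \prod_J \Lambda_n$ by coordinate reduction. The analogous statement for $\I_v^m$ is simpler, using only the freeness of $vK/\ell^m$. Part (4) then follows because $\I_v^* \cong \prod_{J'} \Lambda_*$ for a fixed basis $J'$ of $vK/\ell\cdot vK$, so the vanishing $\I_v^* = 1$ is equivalent to $J' = \varnothing$, i.e., to $vK$ being $\ell$-divisible, a condition independent of $*$.

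For Part (5) the forward direction is contained in (3). For the converse, I would argue by a Frattini-style iteration. The product description identifies the kernel of $\gfrak^m(K) \twoheadrightarrow \gfrak^n(K)$ with $\ell^n \gfrak^m(K)$, and the hypothesis $\D_v^n = \gfrak^n(K)$ together with the surjectivity of $\D_v^m \twoheadrightarrow \D_v^n$ from (3) yields $\gfrak^m(K) = \D_v^m + \ell^n \gfrak^m(K)$. Iterating, $\gfrak^m(K) = \D_v^m + \ell^{kn}\gfrak^m(K)$ for every $k \geq 1$. When $m$ is finite, choosing $k$ large enough forces $\ell^{kn}\gfrak^m(K) = 0$ and hence $\gfrak^m(K) = \D_v^m$; when $m = \infty$, one uses the $\ell$-adic separatedness of $\gfrak^\infty(K) \cong \prod_I \mathbb{Z}_\ell$ together with the closedness of $\D_v^\infty$ to reach the same equality by a limit argument.

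The main obstacle, as I see it, is the freeness claim of the first paragraph. The identity $M[\ell^k] = \ell^{m-k}M$ itself is a short computation once one fixes a primitive $\ell^m$-th root of unity, but deducing from it that $M$ is free over $\Lambda_m$ requires two classical structural inputs: the self-injectivity of the local Artinian ring $\Lambda_m$ (which converts Baer's condition into injectivity), and the facts that over a quasi-Frobenius ring injective modules are projective and that over a local ring projective modules are free. Once these are in place, all five assertions become formal consequences of having explicit product-and-coproduct descriptions of the relevant groups together with a short diagram chase.
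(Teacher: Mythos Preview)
Your proof is correct and takes a genuinely different route from the paper's. The paper argues each item directly via Pontryagin duality: for (1) it dualizes to the map $K^\times/\ell^n \xrightarrow{\ell^{m-n}} K^\times/\ell^m$ and checks injectivity by hand using $\mu_{\ell^m}\subset K$; for (3) it chases the five-lemma diagram with rows $1\to \I_v^*\to\D_v^*\to\gfrak^*(Kv)\to 1$; and for (5) it runs an explicit element-wise bootstrap, showing $U_v^1\subset K^{\times\ell^n}\Rightarrow U_v^1\subset K^{\times\ell^{2n}}$ by writing $1-x=y^{\ell^n}$, lifting an $\ell^{m-n}$-th root of $\bar y$ from $Kv$, and iterating. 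You instead prove once that $K^\times/\ell^m$ (and the analogous quotients for $vK$ and $K^\times/U_v^1$) is a \emph{free} $\Lambda_m$-module via Baer's criterion and the quasi-Frobenius property of $\Lambda_m$, after which all five items collapse to statements about coordinatewise reduction $\prod_I\Lambda_m\twoheadrightarrow\prod_I\Lambda_n$; your (5) in particular becomes a clean Nakayama/Frattini iteration $\gfrak^m(K)=\D_v^m+\ell^{kn}\gfrak^m(K)$ in place of the paper's element chase.

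The trade-off is that your argument imports nontrivial module theory (Baer, quasi-Frobenius $\Rightarrow$ injective $=$ projective, Kaplansky's local-projective-implies-free theorem), whereas the paper stays entirely within elementary Kummer/duality considerations. On the other hand, your freeness statement is a reusable structural fact that immediately explains \emph{why} all of (1)--(5) hold simultaneously and with the same index set, and it makes the rank statement (2) and the Nakayama step in (5) essentially automatic. One small point: your treatment of $m=\infty$ is, like the paper's, by reduction to the finite case; the identification $\gfrak^\infty(K)\cong\prod_I\Z_\ell$ you invoke requires choosing compatible bases across all levels, which follows from your freeness result at each finite level together with Nakayama, but you might say this explicitly rather than leaving it to ``a limit argument.''
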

\begin{proof}
	As in the proof of Lemma \ref{lemma: residue C-pair}, we will assume that $m \neq \infty$ and thus $n \neq \infty$, since the case where either $m$ or $n$ is $\infty$ would follow by passing to the limit.

	\vskip 10pt
	\noindent\emph{Proof of (1).}
	The Pyntryagin dual of the given map $\gfrak^m(K) \rightarrow \gfrak^n(K)$ is precisely 
	\[ K^\times/\ell^n \xrightarrow{\ell^{m-n}} K^\times/\ell^m. \]
	It is straightforward to verify that this map $K^\times/\ell^n \rightarrow K^\times/\ell^m$ is \emph{injective} since $K$ contains $\mu_{\ell^m}$.
	By Pontryagin duality, we deduce that the dual map $\gfrak^m(K) \rightarrow \gfrak^n(K)$ is surjective.

	\vskip 10pt
	\noindent\emph{Proof of (2).}
	Arguing similarly as in (1) above, we see that the kernel of the (surjective) map $\gfrak^m(K) \rightarrow \gfrak^n(K)$ is precisely $\ell^n \cdot \gfrak^m(K)$.
	Thus, the projection $\gfrak^m(K) \rightarrow \gfrak^n(K)$ yields an isomorphism of pro-$\ell$ abelian groups
	\[ \gfrak^m(K)/\ell^n \cong \gfrak^n(K). \]
	Therefore $\gfrak^m(K)$ and $\gfrak^n(K)$ have the same rank as pro-$\ell$ groups.

	\vskip 10pt
	\noindent\emph{Proof of (3).}
	It easily follows from the fact that $vK = K^\times/U_v$ is torsion-free that the map $\I_v^m \rightarrow \I_v^n$ is surjective.
	On the other hand, by (1) we also know that the map $\gfrak^m(Kv) \rightarrow \gfrak^n(Kv)$ is surjective since $\mu_{\ell^m} \subset Kv$.
	By Lemma \ref{lemma: residue C-pair}(1), one has a commutative diagram with exact rows:
	\[ 
	\xymatrix{
	1 \ar[r] & \I_v^m \ar[d]_{\text{surjective}} \ar[r] & \D_v^m \ar[d] \ar[r] & \gfrak^m(Kv) \ar[d]^{\text{surjective}} \ar[r] & 1 \\
	1 \ar[r] & \I_v^n \ar[r] & \D_v^n \ar[r] & \gfrak^n(Kv) \ar[r] & 1
	}
	\]
	The surjectivity of the map $\D_v^m \rightarrow \D_v^n$ follows by chasing the diagram above.

	\vskip 10pt
	\noindent\emph{Proof of (4).}
	If $\I_v^m = 1$ then clearly $\I_v^n = 1$ by (3).
	Conversely, if $\I_v^n = 1$ then $vK$ is $\ell^n$-divisible.
	But, as $vK$ is torsion-free, it follows that $vK$ is $\ell$-divisible, hence $\ell^m$-divisible as well.
	Therefore $\I_v^m = 1$.

	\vskip 10pt
	\noindent\emph{Proof of (5).}
	If $\D_v^m = \gfrak^m(K)$ then $\D_v^n = \gfrak^n(K)$ by (3).
	Conversely, assume that $\D_v^n = \gfrak^n(K)$.
	Then one has $U_v^1 \subset K^{\times\ell^n}$.

	Let $x \in \mf_v$ be given and consider $1-x \in U_v^1$.
	By the above, we see that there exists some $y \in K^\times$ such that $1-x = y^{\ell^n}$.
	Since $v(1-x) = 0$, it follows that $v(y) = 0$ as well.
	Let $z \mapsto \bar z$ denote the canonical map $U_v \rightarrow Kv^\times$.
	Then one has $\bar 1 = \bar y^{\ell^n}$.
	Since $\mu_{\ell^m} \subset Kv$, there exists some $z \in U_v$ such that $\bar y = \bar z^{\ell^{m-n}}$.
	Therefore, there exists some $w \in U_v^1$ such that $y = w z^{\ell^{m-n}}$ and thus
	\[ 1-x = w^{\ell^n} \cdot z^{\ell^m}.\]
	But $w \in U_v^1 \subset K^{\times\ell^n}$ and therefore $1-x \in K^{\times\ell^{2n}} \cdot K^{\times \ell^m}$.
	As $x \in \mf_v$ was arbitrary, we deduce that $U_v^1 \subset K^{\times \ell^{2n}} \cdot K^{\times \ell^m}$.
	Proceeding inductively in this way, we deduce that $U_v^1 \subset K^{\times \ell^m}$, hence $\D_v^m = \gfrak^m(K)$.
\end{proof}

\subsection{The Main Theorem of C-pairs}

Let $m \in \Nbar$ be given, and suppose that $v$ is a valuation of $K$.
Suppose that $\sigma,\tau \in \D_v^m$ are given such that $\langle \sigma,\tau \rangle_{\Lambda_m}/(\langle \sigma,\tau \rangle_{\Lambda_m} \cap \I_v^m)$ is $\Lambda_m$-cyclic.
Then by Lemma \ref{lemma: residue C-pair}, it follows that $(\sigma,\tau)$ is a C-pair.
The non-trivial direction in the ``Main Theorem of C-pairs'' can be seen as a weak converse to this fact.

One should note that the proof of the main theorem of C-pairs uses the classical theory of ``rigid elements'' in a fundamental way.
The theory of rigid elements shows how to construct a valuation ring given certain bounds on the units and principal units.
The theory was originally developed by {\sc Ware} \cite{Ware1981} and {\sc Arason-Elman-Jacob} \cite{Arason1987}, as well as others.
See also the summary of the main results from \cite{Arason1987} which appears in \cite[Theorem 4]{Topaz2012c}.

We will only prove the trivial direction of the main theorem of C-pairs, whereas the non-trivial direction (which uses rigid elements) can be found in \cite[Theorem 3]{Topaz2012c}, the proof of which appears in \S11 of loc.cit.

\begin{theorem}[Main Theorem of C-pairs]
\label{theorem: main theorem of C-pairs}
	Let $m \in \Nbar$ and $M \geq \Nbf(m)$ be given.
	Let $K$ be a field such that $\mu_{2\ell^M} \subset K$, and let $\sigma,\tau \in \gfrak^m(K)$ be given.	
	Then the following are equivalent:
	\begin{enumerate}
		\item There exists a valuation $v$ of $K$ such that $\sigma,\tau \in \D_v^m$ and $\langle \sigma,\tau \rangle_{\Lambda_m}/(\langle\sigma,\tau\rangle_{\Lambda_m} \cap \I_v^m)$ is $\Lambda_m$-cyclic.
		\item There exist $M$-lifts $\sigma',\tau'$ of $\sigma,\tau$ such that $(\sigma',\tau')$ is a C-pair in $\gfrak^M(K)$.
	\end{enumerate}	
\end{theorem}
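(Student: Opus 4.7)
The plan is to establish only the trivial direction $(1) \Rightarrow (2)$, leaving $(2) \Rightarrow (1)$ as a black box from \cite[Theorem~3]{Topaz2012c}; the reverse implication requires the rigid element machinery of \cite{Ware1981} and \cite{Arason1987}, and is the main obstacle of the full theorem.

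For $(1) \Rightarrow (2)$, assume $\sigma,\tau \in \D_v^m$ with $\langle \sigma,\tau \rangle_{\Lambda_m}/(\langle \sigma,\tau\rangle_{\Lambda_m}\cap\I_v^m)$ cyclic. By Lemma \ref{lemma: residue C-pair}(1) this is equivalent to $\langle\sigma_v,\tau_v\rangle_{\Lambda_m} \subset \gfrak^m(Kv)$ being cyclic, so we may assume $\tau_v = c \cdot \sigma_v$ for some $c \in \Lambda_m$ (the symmetric case is handled identically). Use Lemma \ref{lemma: m-lifts}(3) to produce $M$-lifts $\tilde\sigma, \tilde\tau \in \D_v^M$ of $\sigma,\tau$. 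The residues $\tilde\sigma_v, \tilde\tau_v \in \gfrak^M(Kv)$ need not generate a cyclic subgroup at level $M$, so we must adjust $\tilde\tau$ before we can exploit cyclicity.

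Fix any $\tilde c \in \Lambda_M$ lifting $c$. Since both $\tilde\tau_v$ and $\tilde c \cdot \tilde\sigma_v$ reduce to $\tau_v$ modulo $\ell^m$, we may write $\tilde\tau_v - \tilde c \cdot \tilde\sigma_v = \ell^m \eta$ for some $\eta \in \gfrak^M(Kv)$. Using the surjection $\D_v^M \twoheadrightarrow \gfrak^M(Kv)$ of Lemma \ref{lemma: residue C-pair}(1), pick $\delta \in \D_v^M$ with $\delta_v = \eta$, and set $\sigma' := \tilde\sigma$ and $\tau' := \tilde\tau - \ell^m \delta$. Then $\tau'$ is still an $M$-lift of $\tau$ (as $\ell^m \delta$ reduces to $0$ modulo $\ell^m$), and by construction $\tau'_v = \tilde c \cdot \sigma'_v$. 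Consequently, for every $y \in Kv \smallsetminus \{0,1\}$,
\[
\sigma'_v(y)\, \tau'_v(1-y) = \tilde c \, \sigma'_v(y)\, \sigma'_v(1-y) = \sigma'_v(1-y)\, \tau'_v(y),
\]
so $(\sigma'_v, \tau'_v)$ is a C-pair in $\gfrak^M(Kv)$. The hypothesis $\mu_{2\ell^M} \subset K$ forces $-1 \in K^{\times \ell^M}$ (since $\zeta^{\ell^M} = -1$ for any primitive $2\ell^M$-th root of unity $\zeta$), hence $\sigma'(-1) = \tau'(-1) = 0$. Lemma \ref{lemma: residue C-pair}(3) then upgrades the residue C-pair to $(\sigma',\tau')$ being a C-pair in $\gfrak^M(K)$, which completes this direction.

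For the converse $(2) \Rightarrow (1)$ — the genuinely hard part — the strategy in \cite{Topaz2012c} is to read the pointwise identity $\sigma'(x)\tau'(1-x) = \sigma'(1-x)\tau'(x)$ as a $2 \times 2$ determinantal condition that forces the joint kernel $\ker\sigma' \cap \ker\tau' \subset K^\times/K^{\times\ell^M}$ to be ``nearly rigid'' in the sense relevant for the valuation-theoretic rigidity theorems, which then produce a valuation $v$ on $K$ and allow one to verify the cyclicity of $\langle\sigma,\tau\rangle_{\Lambda_m}$ modulo $\I_v^m$. The function $\Nbf(m)$ quantifies the precision $M > m$ needed for the rigidity argument to descend cleanly to level $m$, and encoding this loss is the technical heart of the full result.
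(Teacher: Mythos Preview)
Your proof of $(1)\Rightarrow(2)$ is correct, and your treatment of $(2)\Rightarrow(1)$ as a black box from \cite{Topaz2012c} matches the paper exactly. The tactics for the forward direction differ slightly, however. The paper does not lift $\sigma,\tau$ directly; instead it first replaces them by generators $f,g$ of $\langle\sigma,\tau\rangle_{\Lambda_m}$ with $f\in\I_v^m$ and $g\in\D_v^m$, lifts those via Lemma~\ref{lemma: m-lifts}(3) to $f'\in\I_v^M$ and $g'\in\D_v^M$, notes that $\langle f',g'\rangle_{\Lambda_M}$ is a C-set by Lemma~\ref{lemma: residue C-pair}(3), and then picks the required $M$-lifts $\sigma',\tau'$ inside this C-set. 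Your route---lift first, then correct $\tilde\tau$ by an element of $\ell^m\D_v^M$ to force $\tau'_v\in\Lambda_M\cdot\sigma'_v$---is equally valid but requires one extra observation the paper's change-of-basis avoids: that $\ker\bigl(\gfrak^M(Kv)\to\gfrak^m(Kv)\bigr)=\ell^m\cdot\gfrak^M(Kv)$, which follows from the argument in Lemma~\ref{lemma: m-lifts}(2) applied to $Kv$ (and implicitly uses $\mu_{\ell^M}\subset Kv$). Both arguments terminate in the same appeal to Lemma~\ref{lemma: residue C-pair}(3).
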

\begin{proof}
	We give a full proof of the trivial direction, $(1) \Rightarrow (2)$, while referring to \cite{Topaz2012c} for the non-trivial direction $(2) \Rightarrow (1)$.

	\vskip 10pt
	\noindent$(1) \Rightarrow (2)$.
	By the assumption, there exist $f,g \in \langle \sigma,\tau \rangle_{\Lambda_m}$ such that $f \in \I_v^m$, $g \in \D_v^m$, and such that $\langle f,g \rangle_{\Lambda_m} = \langle \sigma,\tau \rangle_{\Lambda_m}$.
	By Lemma \ref{lemma: m-lifts}(3), there exist $M$-lifts $f',g'$ of $f,g$ such that $f' \in \I_v^M$ and $g' \in \D_v^M$.
	Finally, by Lemma \ref{lemma: residue C-pair}(3), $(f',g')$ is a C-pair, so $\langle f',g' \rangle_{\Lambda_M}$ is a C-set.
	Assertion (2) follows from the fact that $(\langle f',g' \rangle_{\Lambda_M})_m = \langle \sigma,\tau \rangle_{\Lambda_m}$.

	\vskip 10pt
	\noindent$(2) \Rightarrow (1)$.
	See \cite[Theorem 3]{Topaz2012c}.
\end{proof}

\section{Valuative Subsets}
\label{section: valuative subsets}

Let $m \in \Nbar$ be given, and let $K$ be an arbitrary field.
We say that a subset $\Sigma$ of $\gfrak^m(K)$ is a {\bf valuative subset} provided that there exists \emph{some} valuation $v$ of $K$ such that $\Sigma \subset \I_v^m$.
In other words, $\Sigma$ is valuative if and only if there exists a valuation $v$ of $K$ such that $U_v \subset \Sigma^\perp$.

\begin{lemma}
\label{lemma: valuative}
	Let $m \in \Nbar$ be given, and let $K$ be a field.
	Suppose that $\Sigma \subset \gfrak^m(K)$ is a valuative subset.
	Then there exists a unique coarsest valuation $v_\Sigma$ of $K$ such that $\Sigma \subset \I_{v_\Sigma}^m$.
	More precisely, if $w$ is any valuation of $K$ such that $\Sigma \subset \I_w^m$, then $v_\Sigma$ is the coarsening of $w$ associated to the maximal convex subgroup of $v(\Sigma^\perp)$.
\end{lemma}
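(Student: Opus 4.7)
The plan is to construct $v_\Sigma$ explicitly from a fixed $w$ satisfying $\Sigma \subset \I_w^m$ and then establish that it is the coarsest element of $\mathcal{V} := \{v \text{ a valuation of } K : \Sigma \subset \I_v^m\}$; uniqueness then follows automatically from coarsestness. Setting $H := \Sigma^\perp$, the hypothesis becomes $U_w \subset H$, and in particular $-1 \in U_w \subset H$. Since the convex subgroups of the totally ordered abelian group $wK$ are themselves totally ordered by inclusion, the family of convex subgroups of $wK$ contained in $w(H)$ admits a unique maximal element $\Delta_0$ (the union of the family). I would define $v_\Sigma$ to be the coarsening of $w$ associated to $\Delta_0$, so that $U_{v_\Sigma} = w^{-1}(\Delta_0)$, and verify $U_{v_\Sigma} \subset H$ directly: if $w(x) \in \Delta_0 \subset w(H)$ and $y \in H$ satisfies $w(y) = w(x)$, then $x/y \in U_w \subset H$, hence $x = y \cdot (x/y) \in H$. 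Thus $\Sigma \subset \I_{v_\Sigma}^m$.

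The coarsestness of $v_\Sigma$ reduces to the key claim that if $w_1, w_2 \in \mathcal{V}$, then their join $w^+ := w_1 \vee w_2$ (the unique smallest common coarsening) again lies in $\mathcal{V}$. Granting this, for any $v' \in \mathcal{V}$ the join $w \vee v'$ lies in $\mathcal{V}$ and is a coarsening of $w$, so it corresponds to a convex subgroup of $wK$ contained in $w(H)$; by the maximality of $\Delta_0$ this subgroup is contained in $\Delta_0$, forcing $v_\Sigma$ to be a coarsening of $w \vee v'$, and hence of $v'$.

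The main obstacle is the claim about joins. I plan to use the standard description of the join as a localization, namely $\Oc_{w^+} = (\Oc_{w_1})_{\mathfrak{q}_1}$ where $\mathfrak{q}_1 := \mf_{w_2} \cap \Oc_{w_1}$. Every $x \in U_{w^+}$ then has the form $a/b$ with $a, b \in \Oc_{w_1} \setminus \mathfrak{q}_1 = \{y \in \Oc_{w_1} : w_2(y) \leq 0\}$; since $H$ is a subgroup, it suffices to show each such $a$ lies in $H$. By case analysis: if $a \in U_{w_1}$ this is immediate; if $a \in \mf_{w_1}$ with $w_2(a) = 0$ then $a \in U_{w_2} \subset H$; otherwise $a \in \mf_{w_1}$ and $a^{-1} \in \mf_{w_2}$, so $1 - a \in U_{w_1} \subset H$ and $1 - a^{-1} = -(1-a)/a \in U_{w_2} \subset H$, which combined with $-1 \in H$ yields $1/a \in H$, hence $a \in H$. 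The essential appeal to $-1 \in H$ is precisely what makes the \emph{valuative} hypothesis on $\Sigma$ enter the argument.
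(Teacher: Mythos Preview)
Your proof is correct, but it follows a genuinely different route from the paper's.

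The paper, like you, starts from an arbitrary $w$ with $U_w \subset \Sigma^\perp$ and passes to the coarsening $v$ associated to the maximal convex subgroup of $w(\Sigma^\perp)$. But instead of showing that $\mathcal{V}$ is closed under joins, the paper shows directly that $U_v$ coincides with the set
\[
H' := \{\, t \in \Sigma^\perp \ : \ \forall x \in K^\times \smallsetminus \Sigma^\perp,\ t-x \in (1-x)\cdot \Sigma^\perp \,\},
\]
using the ultrametric inequality for $U_v \subset H'$ and the fact that $v(\Sigma^\perp)$ has no nontrivial convex subgroups for $H' \subset U_v$. Since $H'$ depends only on $\Sigma$, the valuation $v$ is independent of the chosen $w$; coarsestness is then immediate because the same $v$ arises as a coarsening of every $w \in \mathcal{V}$.

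The two arguments trade off as follows. Your join argument is more lattice-theoretic and makes explicit the (nonobvious) fact that $\mathcal{V}$ is stable under finest common coarsenings; the case analysis on elements of $\Oc_{w_1}\smallsetminus\mathfrak{q}_1$ using $1-a$ and $1-a^{-1}$ is a nice self-contained computation. The paper's argument is shorter and, more importantly, produces an \emph{intrinsic formula} for $U_{v_\Sigma}$ in terms of $\Sigma$ alone. The paper records this formula immediately after the lemma and relies on it conceptually later, so if you were writing this for the paper you would still want to extract that description; your proof as it stands establishes the lemma but not this additional consequence.
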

\begin{proof}
	Let $w$ be any valuation such that $\Sigma \subset \I_w^n$.
	Equivalently, one has $U_w \subset \Sigma^\perp$.
	Let $v$ be the coarsening of $w$ associated to the maximal convex subgroup of $w(\Sigma^\perp)$.
	Thus, by construction, $v(\Sigma^\perp)$ contains no non-trivial convex subgroups and $U_v \subset \Sigma^\perp$.
	
	Consider the set
	\[ H := \{t \in \Sigma^\perp \ : \ \forall x \in K^\times \smallsetminus \Sigma^\perp, \ t-x \in (1-x) \cdot \Sigma^\perp\}.\]
	Since $U_v \subset \Sigma^\perp$, the ultrametric inequality immediately implies that $U_v \subset H$.
	We claim that $U_v = H$.

	Suppose that $t \in H$ is given and assume that $v(t) > 0$.
	Since $v(\Sigma^\perp)$ contains no non-trivial convex subgroups, there exists some $x \in K^\times \smallsetminus \Sigma^\perp$ such that $0 < v(x) < v(t)$.
	But then $t-x \in x \cdot U_v \subset x \cdot \Sigma^\perp$, while $1-x \in U_v \subset \Sigma^\perp$.
	This contradicts the definition of $H$.

	Similarly, if $v(t) < 0$, then there exists some $x \in K^\times \smallsetminus \Sigma^\perp$ such that $v(t) < v(x) < 0$.
	Thus $t-x \in t \cdot U_v \subset t \cdot \Sigma^\perp = \Sigma^\perp$, while $(1-x) \in x \cdot U_v \subset x \cdot \Sigma^\perp$.
	Again, this contradicts the definition of $H$.

	Therefore, we deduce that $H = U_v$.
	In particular, $U_v = H$ depends only on $\Sigma$ and $K$, but not at all on the original choice of valuation $w$.
	This completes the proof of the lemma.
\end{proof}

Given a valuative subset $\Sigma$ of $\gfrak^m(K)$, we will denote {\bf the valuation associated to $\Sigma$} by $v_\Sigma$ as discussed in Lemma \ref{lemma: valuative}.
Namely, $v := v_\Sigma$ is the unique valuation such that one has 
\[ U_v = \left\{t \in \Sigma^\perp \ : \ \forall x \in K^\times \smallsetminus \Sigma^\perp, \ t-x \in (1-x) \cdot \Sigma^\perp\right\}.\]
Note that if $v = v_\Sigma$ for some valuative subset $\Sigma$ of $\gfrak^m(K)$, then $vK$ contains no non-trivial $\ell$-divisible convex subgroups.
Indeed, any $\ell$-divisible convex subgroup must be contained in $v(\Sigma^\perp)$, and must therefore be trivial by Lemma \ref{lemma: valuative}.

\subsection{Comparability of Valuations}

In this subsection, we prove some lemmas concerning comparability of valuations associated to valuative subsets.
\begin{lemma}
\label{lemma: C-pair implies comparable}
	Let $m \in \Nbar$ and $M \geq \Mbf_1(m)$ be given.
	Let $K$ be a field such that $\mu_{2\ell^M} \subset K$, and let $\sigma,\tau \in \gfrak^m(K)$ be two valuative elements.
	Then the following are equivalent:
	\begin{enumerate}
		\item The two valuations $v_\sigma,v_\tau$ are comparable.
		\item There exist $M$-lifts $\sigma',\tau'$ of $\sigma,\tau$ such that $(\sigma',\tau')$ is a C-pair.
	\end{enumerate}
\end{lemma}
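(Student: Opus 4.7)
The plan handles the two directions separately.

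\textbf{Direction $(1) \Rightarrow (2)$.} Assume without loss of generality that $v_\sigma$ is a coarsening of $v_\tau$, so $U_{v_\sigma} \supset U_{v_\tau}$ and hence $\I_{v_\sigma}^m \subset \I_{v_\tau}^m$, placing both $\sigma$ and $\tau$ in $\I_{v_\tau}^m$. By Lemma \ref{lemma: m-lifts}(3), select $M$-lifts $\sigma^\sharp, \tau^\sharp \in \I_{v_\tau}^M$. Via Lemma \ref{lemma: residue C-pair}(1) these have trivial image in $\gfrak^M(Kv_\tau)$, so the residue pair is $(0,0)$ and trivially a C-pair; since $\mu_{2\ell^M} \subset K$ forces $\sigma^\sharp(-1) = \tau^\sharp(-1) = 0$ in $\Lambda_M$, Lemma \ref{lemma: residue C-pair}(3) promotes this to a C-pair in $\gfrak^M(K)$.

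\textbf{Direction $(2) \Rightarrow (1)$.} I would argue by contradiction: suppose $(\sigma', \tau')$ is a C-pair at level $M$ while $v_\sigma$ and $v_\tau$ are incomparable. Passing to their finest common coarsening renders the induced valuations independent, so by approximation I can find $a \in K^\times$ with $v_\sigma(a) > 0$ and $v_\tau(a) < 0$ and with these values freely prescribed. For such $a$ one has $\sigma(1-a) = 0$ (since $v_\sigma(1-a) = 0$ places $1-a$ in $U_{v_\sigma} \subset \ker\sigma$) and $\tau(1-a) = \tau(a)$ (from the decomposition $1-a = -a(1-a^{-1})$ with $1 - a^{-1} \in U_{v_\tau}^1 \subset \ker\tau$ and $\tau(-1) = 0$). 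Substituting into the $M$-level C-pair equation and rearranging yields $\sigma'(a)\tau'(a) \in \ell^m \Lambda_M$.

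The main obstacle is converting this single divisibility into a genuine contradiction. The strategy is to pick the free values $v_\sigma(a), v_\tau(a)$ so that $\sigma'(a)$ and $\tau'(a)$ realize generator values of their respective image subgroups $\ell^{k_\sigma}\Lambda_M$ and $\ell^{k_\tau}\Lambda_M$; the property (noted after Lemma \ref{lemma: valuative}) that $v_\sigma K$ and $v_\tau K$ contain no nontrivial $\ell$-divisible convex subgroups is what allows such generators to be realized with the prescribed signs. To close the contradiction one reduces to the surjective case $k_\sigma = k_\tau = 0$ by replacing $\sigma, \tau$ with their factorizations onto their images (a passage which preserves the associated valuations), at which point the hypothesis $M \geq \Mbf_1(m) = 2m-1$ is precisely what the cancellation principle (Fact \ref{fact: cancellation}) demands to conclude that the product $\sigma'(a)\tau'(a)$ can be engineered outside $\ell^m\Lambda_M$.
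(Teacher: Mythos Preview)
Your direction $(1)\Rightarrow(2)$ is correct and coincides with the paper's argument.

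For $(2)\Rightarrow(1)$ you take a genuinely different route. The paper applies Fact~\ref{fact: cancellation} to deduce that for every $x\in K\smallsetminus\{0,1\}$ the subgroup
\[
\langle(\sigma(x),\tau(x)),(\sigma(1-x),\tau(1-x))\rangle_{\Lambda_m}\subset\Lambda_m\times\Lambda_m
\]
is $\Lambda_m$-cyclic, and then invokes \cite[Proposition~3.6]{Topaz2012c}, a rigid-elements statement, to conclude comparability. You instead attempt a direct contradiction via approximation. Your derivation of $\sigma'(a)\tau'(a)\in\ell^m\Lambda_M$ is correct, but the closing step has a real gap.

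The problem is that the sign constraints force $a\in U_w$, where $w$ is the finest common coarsening of $v_\sigma,v_\tau$: from $v_\sigma(a)>0$ one gets $w(a)\geq 0$, and from $v_\tau(a)<0$ one gets $w(a)\leq 0$, hence $w(a)=0$. Consequently $v_\sigma(a)$ is confined to the convex subgroup $H_\sigma:=(v_\sigma/w)(Kw)$ of $v_\sigma K$, and likewise $v_\tau(a)\in H_\tau$. The restrictions $\sigma|_{H_\sigma}$ and $\tau|_{H_\tau}$ are nonzero (since $H_\sigma,H_\tau$ cannot lie in $v_\sigma(\ker\sigma),v_\tau(\ker\tau)$), but their images can be strictly smaller than those of $\sigma,\tau$. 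For instance, take $m=2$, $v_\sigma K=\Z^2$ with the lexicographic order, $H_\sigma=\Z\times\{0\}$, and $\sigma(a,b)=\ell a+b$: then $\sigma$ is surjective onto $\Lambda_2$, yet $\sigma|_{H_\sigma}$ has image $\ell\Lambda_2$. In such a configuration every admissible $a$ already satisfies $\sigma(a)\tau(a)=0$ in $\Lambda_m$, so no choice of $a$ yields the desired contradiction. Your ``reduction to the surjective case'' does not help here, since the obstruction concerns the restrictions to $H_\sigma,H_\tau$ rather than the global images of $\sigma,\tau$; and in any case that reduction changes the level from $m$ to $m-k_\sigma$ (resp.\ $m-k_\tau$), after which it is unclear how the given $M$-lifts and the bound $M\geq\Mbf_1(m)$ transfer.
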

\begin{proof}
	The proof of this lemma relies on the theory of rigid elements.
	We will prove the trivial direction, as well as some of the non-technical details for the non-trivial direction, but we will refer to \cite{Topaz2012c} for the portion which uses rigid elements.

	\vskip 10pt
	\noindent$(1) \Rightarrow (2)$.
	Say, e.g. that $v_\sigma$ is coarser than $v_\tau$, so that $\I_{v_\sigma}^m \subset \I_{v_\tau}^m$, hence $\sigma,\tau \in \I_{v_\tau}^m$.
	By Lemma \ref{lemma: m-lifts}(3), there exist $M$-lifts $\sigma',\tau'$ of $\sigma,\tau$ such that $\sigma',\tau' \in \I_{v_\tau}^M$.
	By Lemma \ref{lemma: residue C-pair}(3), $(\sigma',\tau')$ is a C-pair, as required.

	\vskip 10pt
	\noindent$(2) \Rightarrow (1)$.
	It follows from Fact \ref{fact: cancellation} that for all $x \in K \smallsetminus \{0,1\}$, the subgroup
	\[ \langle (\sigma(x),\tau(x)),(\sigma(1-x),\tau(1-x)) \rangle_{\Lambda_m} \]
	of $\Lambda_m \times \Lambda_m$ is $\Lambda_m$-cyclic.
	Thus, (1) follows from \cite[Proposition 3.6]{Topaz2012c}.
\end{proof}

\begin{lemma}
\label{lemma: supremum}
	Let $m \in \Nbar$ be given, and let $K$ be a field.
	Let $\Sigma$ be a subset of $\gfrak^m(K)$ consisting of valuative elements such that, for all $\sigma,\tau \in \Sigma$, the two valuations $v_\sigma,v_\tau$ are comparable.
	Then $\Sigma$ is valuative, and $v_\Sigma$ is the valuation-theoretic supremum of $(v_\sigma)_{\sigma \in \Sigma}$.
	Moreover, one has 
	\[ \I_{v_\Sigma}^m = \bigcup_{\sigma \in \Sigma} \I_{v_\sigma}^m \ \text{and} \ \D_{v_\Sigma}^m = \bigcap_{\sigma \in \Sigma} \D_{v_\sigma}^m. \]
\end{lemma}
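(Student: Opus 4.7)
The plan is to build the valuation $v$ directly as the finest common refinement of the chain $(v_\sigma)_{\sigma \in \Sigma}$, and then identify it with $v_\Sigma$ using the uniqueness in Lemma \ref{lemma: valuative}. Since the $v_\sigma$ are pairwise comparable, the valuation rings $\Oc_{v_\sigma}$ form a totally ordered family under inclusion; a standard fact from valuation theory then says that $\Oc_v := \bigcap_{\sigma \in \Sigma} \Oc_{v_\sigma}$ is itself a valuation ring of $K$, and the associated valuation $v$ is the valuation-theoretic supremum of $(v_\sigma)$ (each $v_\sigma$ is a coarsening of $v$, and $v$ is the finest such).

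Next I would verify that $\Sigma$ is valuative with $v_\Sigma = v$. From $\Oc_v \subset \Oc_{v_\sigma}$ one gets $U_v \subset U_{v_\sigma} \subset \ker \sigma$ for each $\sigma \in \Sigma$ (the latter because $\sigma \in \I_{v_\sigma}^m$), so $U_v \subset \Sigma^\perp$ and $\Sigma$ is valuative. By Lemma \ref{lemma: valuative}, $v_\Sigma$ is the unique coarsest valuation with $\Sigma \subset \I_{v_\Sigma}^m$, so $v_\Sigma$ is coarser than $v$. Conversely, $\sigma \in \I_{v_\Sigma}^m$ for every $\sigma \in \Sigma$, and the minimality of $v_\sigma$ in Lemma \ref{lemma: valuative} applied to the singleton $\{\sigma\}$ forces $v_\sigma$ to be coarser than $v_\Sigma$; hence $v_\Sigma$ is a common refinement of all the $v_\sigma$, so $v_\Sigma$ is finer than or equal to $v = \sup v_\sigma$. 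The two comparisons yield $v_\Sigma = v$.

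For the formulas, set $\Delta_\sigma := \ker(v_\Sigma K \twoheadrightarrow v_\sigma K)$; these are convex subgroups of $v_\Sigma K$ forming a decreasing chain whose intersection must be $0$, for otherwise the coarsening of $v_\Sigma$ attached to $\bigcap_\sigma \Delta_\sigma$ would be a strictly coarser common refinement of the $v_\sigma$, contradicting $v_\Sigma = v$. I would then establish $\mf_{v_\Sigma} = \bigcup_\sigma \mf_{v_\sigma}$: given $x$ with $v_\Sigma(x) > 0$, the value $v_\Sigma(x)$ escapes some $\Delta_{\sigma_0}$, and convexity then forces $v_\Sigma(x) > \Delta_{\sigma_0}$, whence $v_{\sigma_0}(x) > 0$ and $x \in \mf_{v_{\sigma_0}}$. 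Consequently $U_{v_\Sigma}^1 = \bigcup_\sigma U_{v_\sigma}^1$, and applying $\Hom(-,\Lambda_m)$ gives the decomposition formula $\D_{v_\Sigma}^m = \bigcap_\sigma \D_{v_\sigma}^m$.

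The inertia formula is where I expect the main obstacle to lie. The inclusion $\I_{v_\Sigma}^m \supset \bigcup_\sigma \I_{v_\sigma}^m$ is immediate from comparability of $v_\sigma$ with $v_\Sigma$. The reverse inclusion is more subtle: given $\phi \in \I_{v_\Sigma}^m$, viewed as a homomorphism $v_\Sigma K \to \Lambda_m$, one must locate some $\sigma$ with $\phi|_{\Delta_\sigma} = 0$ so that $\phi$ descends through $v_\sigma K$. Unlike the case of $U_{v_\Sigma}^1$, one only has $U_{v_\Sigma} = \bigcap_\sigma U_{v_\sigma}$, so the decomposition argument cannot be dualized directly. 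I would attempt to handle this step by combining the minimality characterization of $v_\sigma$ (as the coarsest valuation with $\sigma$ in its inertia) with the chain structure $\bigcap_\sigma \Delta_\sigma = 0$, leveraging the specific relationship between $\Sigma$ and the filtration $(\Delta_\sigma)$ to force $\phi$ to factor through a finite stage of the chain.
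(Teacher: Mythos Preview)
Your argument for $v_\Sigma = v$ and for the decomposition formula is essentially the paper's: the paper also takes $v$ to be the supremum via $\Oc_v = \bigcap_\sigma \Oc_{v_\sigma}$, verifies $v_\Sigma = v$ by exactly the two comparisons you describe, and then derives the decomposition formula from $U_v^1 = \bigcup_\sigma U_{v_\sigma}^1$ (which you obtain more explicitly through $\mf_{v_\Sigma} = \bigcup_\sigma \mf_{v_\sigma}$; the paper simply asserts it as a property of the supremum).

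Your instinct about the inertia formula is correct, and in fact sharper than the paper's own treatment. The paper merely asserts that $\I_{v_\Sigma}^m = \bigcup_\sigma \I_{v_\sigma}^m$ ``follows easily'' from $U_v = \bigcap_\sigma U_{v_\sigma}$, but dualizing an intersection does not produce a union, and this is not a proof. In fact the inclusion $\I_{v_\Sigma}^m \subset \bigcup_\sigma \I_{v_\sigma}^m$ can fail. Take $v$ with $vK = \bigoplus_{i \geq 1} \Z$ in the lexicographic order, let $v_n$ be the coarsening by the convex subgroup $\Delta_n = \{y : y_1 = \cdots = y_n = 0\}$, and set $\sigma_n \in \gfrak^1(K)$ to be $x \mapsto v(x)_n \bmod \ell$. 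One checks $v_{\sigma_n} = v_n$, the $v_n$ form a chain with supremum $v$, and $v_\Sigma = v$ for $\Sigma = \{\sigma_n\}_n$; but the element $\phi \in \I_v^1 = \Hom(vK,\Z/\ell)$ given by $y \mapsto \sum_i y_i \bmod \ell$ lies in no $\I_{v_n}^1$. So the inclusion you were trying to establish is false in general, and no refinement of the chain argument will rescue it.

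Fortunately, the inertia formula is never invoked elsewhere in the paper: every subsequent citation of this lemma uses only the existence of $v_\Sigma$, its identification as the supremum, or the decomposition formula. Your proof of those parts is complete and matches the paper; the piece you could not finish is a defect in the stated lemma rather than in your argument.
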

\begin{proof}
	Since the valuations $(v_\sigma)_{\sigma \in \Sigma}$ are pairwise comparable, their valuation theoretic supremum exists by general valuation theory.
	We let $v$ denote this supremum, and recall that 
	\[ \Oc_v = \bigcap_{\sigma \in \Sigma} \Oc_{v_\sigma}. \]
	Thus $v$ is the coarsest valuation such that $v_\sigma$ is a coarsening of $v$ for all $\sigma \in \Sigma$.

	Note that $\sigma \in \I_{v_\sigma}^m \subset \I_v^m$ for all $\sigma \in \Sigma$, and therefore $\Sigma \subset \I_v^m$.
	By Lemma \ref{lemma: valuative}, it follows that $v_\Sigma$ is a coarsening of $v$.
	However, $v_\sigma$ is a coarsening of $v_\Sigma$ for all $\sigma \in \Sigma$ by Lemma \ref{lemma: valuative}, thus $v = v_\Sigma$.

	Since $v$ is the supremum of $(v_\sigma)_{\sigma \in \Sigma}$, one has
	\[ \bigcap_{\sigma \in \Sigma} U_{v_\sigma} = U_{v} \ \text{and} \ \bigcup_{\sigma \in \Sigma} U_{v_\sigma}^1 = U_v^1. \]
	The fact that
	\[ \I_{v}^m = \bigcup_{\sigma \in \Sigma} \I_{v_\sigma}^m \ \text{and} \ \D_v^m = \bigcap_{\sigma \in \Sigma} \D_{v_\sigma}^m \]
	follows easily from this observation by using the definition of the minimized inertia and decomposition groups.
\end{proof}

\begin{lemma}
\label{lemma: non-valuative comparable}
	Let $m \in \Nbar$ be given, and let $K$ be a field.
	Let $v_1,v_2$ be two valuations of $K$, and assume that there exists some element $\sigma \in \D_{v_1}^m \cap \D_{v_2}^m$ such that $\sigma$ is non-valuative.
	Then the two valuations $v_1,v_2$ are comparable.
\end{lemma}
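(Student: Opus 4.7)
I would proceed by contraposition: assume $v_1$ and $v_2$ are incomparable, and show that every $\sigma \in \D_{v_1}^m \cap \D_{v_2}^m$ must be valuative. First, I would let $w$ denote the finest common coarsening of $v_1$ and $v_2$, which exists by general valuation theory since the set of coarsenings of a given valuation is totally ordered. Since $v_1$ and $v_2$ are assumed incomparable, $w$ is strictly coarser than each $v_i$, so the induced valuations $\bar v_1, \bar v_2$ on the residue field $Kw$ are both non-trivial. Moreover, $\bar v_1$ and $\bar v_2$ are \emph{independent} on $Kw$: any common non-trivial coarsening $u$ of $\bar v_1$ and $\bar v_2$ on $Kw$ would yield, via composition, a common coarsening $u \circ w$ of $v_1$ and $v_2$ on $K$ which is strictly coarser than $w$, contradicting the choice of $w$.

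Next, invoking Lemma \ref{lemma: residue C-pair}(1)--(2), the canonical surjection $\D_w^m \twoheadrightarrow \gfrak^m(Kw)$ with kernel $\I_w^m$ identifies $\D_{v_i}^m/\I_w^m$ with $\D_{\bar v_i}^m$ for $i = 1,2$. Since $\sigma \in \D_{v_1}^m \cap \D_{v_2}^m \subset \D_w^m$, its image $\bar\sigma \in \gfrak^m(Kw)$ lies in $\D_{\bar v_1}^m \cap \D_{\bar v_2}^m$. The plan is then to show $\bar\sigma = 0$, because this means $\sigma \in \I_w^m$, i.e., $\sigma$ is valuative (witnessed by $w$), contradicting the assumption.

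To prove $\bar\sigma = 0$ I would use the classical approximation theorem for independent valuations. Given any $x \in Kw^\times$, pick $\gamma \in \bar v_1 Kw$ with $\gamma > \bar v_1(x)$; by approximation, there exists $y \in Kw$ with $\bar v_1(y - x) \geq \gamma$ and $\bar v_2(y - 1) > 0$. The first condition forces $\bar v_1(y) = \bar v_1(x)$ and $y/x \in U_{\bar v_1}^1$ (in particular $y \neq 0$), whence $\bar\sigma(y) = \bar\sigma(x)$ since $\bar\sigma \in \D_{\bar v_1}^m$ is trivial on $U_{\bar v_1}^1$. The second condition gives $y \in U_{\bar v_2}^1$, whence $\bar\sigma(y) = 0$ since $\bar\sigma \in \D_{\bar v_2}^m$. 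Combining, $\bar\sigma(x) = 0$ for every $x \in Kw^\times$, as desired.

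The main conceptual step is the independence of $\bar v_1, \bar v_2$ on $Kw$, which unlocks the approximation theorem; after that, the argument reduces to a one-line computation. All minimized-Galois bookkeeping is absorbed by Lemma \ref{lemma: residue C-pair}, and the case $m = \infty$ presents no additional difficulty as everything passes to the inverse limit.
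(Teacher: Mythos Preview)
Your approach is essentially the same as the paper's: both pass to the finest common coarsening $w$, observe that the induced valuations on $Kw$ are independent, and use the approximation theorem to conclude $\D_{\bar v_1}^m \cap \D_{\bar v_2}^m = 0$ (the paper phrases this succinctly as $U_{\bar v_1}^1 \cdot U_{\bar v_2}^1 = Kw^\times$, while you spell out the approximation explicitly). One small slip: in your independence argument, the composite $u \circ w$ is strictly \emph{finer} than $w$, not coarser---and that is precisely what contradicts $w$ being the \emph{finest} common coarsening of $v_1,v_2$.
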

\begin{proof}
	Let $w$ denote the finest common coarsening of $v_1,v_2$ so that $\Oc_w = \Oc_{v_1} \cdot \Oc_{v_2}$, and let $w_i = v_i/w$ denote the valuation of $Kw$ induced by $v_i$ for $i = 1,2$.
	By basic valuation theory, if $w_1,w_2$ are both non-trivial, then they must be independent.

	If $w_1,w_2$ are indeed both non-trivial, then by the approximation theorem for independent valuations, one has $U_{w_1}^1 \cdot U_{w_2}^1 = Kw^\times$.
	Thus $\D_{w_1}^m \cap \D_{w_2}^m = 1$, so that $\D_{v_1}^m \cap \D_{v_2}^m = \I_w^m$ by Lemma \ref{lemma: residue C-pair}(2).
	But then $\sigma \in \I_w^m$, hence $\sigma$ is valuative.

	Since $\sigma$ is \emph{non-valuative} by assumption, we deduce that either $w_1$ or $w_2$ must be trivial.
	Hence $v_1$ and $v_2$ are comparable, as required.
\end{proof}

\subsection{C-pairs and Decomposition Elements}
Our final technical lemma essentially shows that elements which form a C-pair with a valuative element must arise from minimized decomposition.
\begin{lemma}
\label{lemma: C-pair implies decomposition}
	Let $m \in \Nbar$ and $M \geq \Mbf_1(m)$ be given, and let $K$ be a field.
	Let $\sigma' \in \gfrak^M(K)$ be a valuative element, and let $\tau' \in \gfrak^M(K)$ be such that $(\sigma',\tau')$ is a C-pair.
	Then $\sigma := \sigma'_m$ is valuative, and $\tau := \tau'_m \in \D_{v_\sigma}^m$.
\end{lemma}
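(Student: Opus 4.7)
The plan is to first dispense with the valuativity of $\sigma$, which follows immediately because any valuation $v$ with $\sigma' \in \I_v^M$ also has $\sigma = \sigma'_m \in \I_v^m$; hence $v_\sigma$ exists and is a coarsening of $v := v_{\sigma'}$. To establish $\tau \in \D_{v_\sigma}^m$, I will show that $\tau(u) = 0$ for every $u \in U_{v_\sigma}^1$. Writing $u = 1 - x$ with $x \in \mf_{v_\sigma} \subset \mf_v$, the inclusion $1-x \in U_v^1 \subset \ker \sigma'$ collapses the C-pair identity to
\[ \sigma'(x) \tau'(1-x) = 0 \quad \text{in } \Lambda_M. \]

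When $\sigma(x) \neq 0$ (call this Case~A), the $\ell$-adic valuation $k$ of $\sigma'(x) \in \Lambda_M$ is at most $m-1$, so the displayed equation forces $\tau'(1-x) \in \ell^{M-k}\Lambda_M$. Since $M \geq \Mbf_1(m) = 2m-1$, we have $M - k \geq m$, and hence $\tau(1-x) = 0$.

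The main obstacle is Case~B, where $\sigma(x) = 0$ and the above equation degenerates. To handle it, I will reduce to Case~A via a multiplicative factorization. Let $\bar\phi : v_\sigma K \to \Lambda_m$ be the homomorphism induced by $\sigma \in \I_{v_\sigma}^m$. The minimality characterization of $v_\sigma$ implies that $\ker \bar\phi$ contains no nontrivial convex subgroup of $v_\sigma K$, since any such would yield a strict further coarsening of $v_\sigma$ still containing $\sigma$ in its inertia. Applying the division algorithm inside the convex subgroup generated by $v_\sigma(x)$, this property forces the existence of some $\gamma \in v_\sigma K$ with $0 < \gamma < v_\sigma(x)$ and $\bar\phi(\gamma) \neq 0$: otherwise the entire nontrivial convex subgroup generated by $v_\sigma(x)$ would lie in $\ker \bar\phi$. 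Now choose $y_1 \in K^\times$ with $v_\sigma(y_1) = \gamma$, and set $y_2 := (x - y_1)/(1 - y_1)$, which gives the algebraic identity $(1 - y_1)(1 - y_2) = 1 - x$. A direct computation using $v_\sigma(y_1) < v_\sigma(x)$, $\sigma(-1) = 0$, and the fact that $1 - y_1$ and $1 - x/y_1$ both lie in $U_{v_\sigma}^1 \subset \ker \sigma$ shows that $v_\sigma(y_2) = v_\sigma(y_1) > 0$ and $\sigma(y_2) = \sigma(y_1) \neq 0$, so both $y_1$ and $y_2$ fall into Case~A. Additivity of $\tau$ then yields $\tau(1-x) = \tau(1-y_1) + \tau(1-y_2) = 0$, completing the proof.
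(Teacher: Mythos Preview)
Your proof is correct and follows essentially the same approach as the paper's. Both proofs split into the same two cases on whether $\sigma(x) \neq 0$, handle Case~A via the cancellation principle using $M \geq \Mbf_1(m)$, and handle Case~B by locating an auxiliary element of smaller positive value on which $\sigma$ does not vanish (using that $v_\sigma(\ker\sigma)$ has no nontrivial convex subgroups) and then factoring multiplicatively to reduce to Case~A. The only cosmetic differences are that in Case~A you route through $v_{\sigma'}$ to get $\sigma'(1-x)=0$ directly, whereas the paper argues via the ultrametric dichotomy $\sigma'(1-x)\in\{0,\sigma'(x)\}$; and in Case~B the paper uses the factorization $(1-y)(1-x)=1-(y+x(1-y))$ while you use the equivalent $(1-y_1)(1-y_2)=1-x$ with $y_2=(x-y_1)/(1-y_1)$.
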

\begin{proof}
	Since $\sigma'$ is valuative, the element $\sigma = \sigma'_m$ is also valuative.
	Put $v := v_\sigma$, and let $x \in \mf_v$ be given.
	We will show that $\tau(1-x) = \tau'_m(x) = 0$, which implies that $U_v^1 \subset \ker\tau$, hence $\tau \in \D_v^m$.

	\vskip 10pt
	\noindent\underline{\bf Case $\sigma(x) \neq 0$}.
	Note that $\sigma(1-x) = 0$ since $1-x \in U_v^1 \subset U_v$.
	Since $\sigma'$ is valuative as well, it follows that $\sigma'(1-x) \in \{0,\sigma'(x)\}$ by the ultrametric inequality, hence $\sigma'(1-x) = 0$ since $\sigma'(x) \neq 0$ by assumption.
	As $(\sigma',\tau')$ is a C-pair, we see that 
	\[ \sigma'(x)\tau'(1-x) = \sigma'(1-x)\tau'(x) = 0. \]
	But $\sigma'(x) \notin \ell^m\cdot \Lambda_M$ since $\sigma(x) \neq 0$.
	Therefore, $\tau(1-x) = 0$ by Fact \ref{fact: cancellation}.

	\vskip 10pt
	\noindent\underline{\bf Case $\sigma(x) = 0$}.
	Since $v(\ker(\sigma))$ contains no non-trivial convex subgroups by Lemma \ref{lemma: valuative}, there exists some $y \in \mf_v$ such that $\sigma(y) \neq 0$ and such that $0 < v(y) < v(x)$.
	One has $v(y+x\cdot (1-y)) = v(y)$ by the ultrametric inequality, and thus 
	\[ 0 \neq \sigma(y) = \sigma(y+x\cdot(1-y)). \]
	In particular, the first case implies that
	\[\tau(1-y) + \tau(1-x) = \tau((1-y)(1-x)) = \tau(1-(y+x\cdot(1-y))) = 0. \]
	Since $\tau(1-y) = 0$ as well by the first case, we see that $\tau(1-x) = 0$.
\end{proof}

\section{Proofs of Main Theorems}
\label{section: proof of main theorems}

\subsection{Preliminary Lemmas}
The proofs of our main theorems will all rely primarily on the following ``Key Lemma.''
\begin{lemma}[Key Lemma]
\label{lemma: key lemma}
	Let $n \in \Nbar$ and $N \geq \Rbf(n)$ be given, and put $m := \Mbf_1(n)$.
	Let $K$ be a field such that $\mu_{2\ell^N} \subset K$. 
	Let $\sigma,\tau_1,\tau_2 \in \gfrak^n(K)$ be given and let $\sigma',\tau_1',\tau_2'$ be $N$-lifts of $\sigma,\tau_1,\tau_2$.
	Assume that the following conditions hold true:
	\begin{enumerate}
		\item $(\tau_1,\tau_2)$ is not a C-pair.
		\item $(\sigma',\tau_1')$ and $(\sigma',\tau_2')$ are both C-pairs.
	\end{enumerate}
	Then $\sigma'_m$ is valuative, and thus $\sigma$ is valuative.
	Moreover, if $v := v_\sigma$ denotes the valuation associated to $\sigma$, then one has $\tau_1,\tau_2 \in \D_v^n$.
\end{lemma}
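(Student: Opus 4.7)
The plan is to apply the Main Theorem of C-pairs (Theorem \ref{theorem: main theorem of C-pairs}) to each of the C-pairs $(\sigma', \tau'_1)$ and $(\sigma', \tau'_2)$ at the intermediate level $m_\ast := \Mbf_2(\Mbf_1(n)) = \Mbf_2(m)$; the hypothesis $N \geq \Rbf(n) = \Nbf(m_\ast)$ is exactly what is needed. This yields valuations $v_1, v_2$ of $K$ with $\sigma'_{m_\ast}, \tau'_{i, m_\ast} \in \D_{v_i}^{m_\ast}$, and $\langle \sigma'_{m_\ast}, \tau'_{i, m_\ast} \rangle_{\Lambda_{m_\ast}}$ cyclic modulo $\I_{v_i}^{m_\ast}$.

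I argue that $\sigma'_m$ is valuative by contradiction. If not, then neither is the lift $\sigma'_{m_\ast}$, so Lemma \ref{lemma: non-valuative comparable} applied to $\sigma'_{m_\ast} \in \D_{v_1}^{m_\ast} \cap \D_{v_2}^{m_\ast}$ forces $v_1$ and $v_2$ to be comparable. Without loss of generality $v_2$ refines $v_1$, whence $\D_{v_2}^{m_\ast} \subseteq \D_{v_1}^{m_\ast}$ and each of $\sigma'_{m_\ast}, \tau'_{1, m_\ast}, \tau'_{2, m_\ast}$ lies in $\D_{v_1}^{m_\ast}$. Writing $\alpha := (\sigma'_{m_\ast})_{v_1}$ and $\beta_i := (\tau'_{i, m_\ast})_{v_1}$ in $\gfrak^{m_\ast}(Kv_1)$ via Lemma \ref{lemma: residue C-pair}(1), Lemma \ref{lemma: residue C-pair}(3) gives that both $(\alpha, \beta_1)$ and $(\alpha, \beta_2)$ are C-pairs in $\gfrak^{m_\ast}(Kv_1)$, while the cyclicity from the Main Theorem for $v_1$ makes $\langle \alpha, \beta_1 \rangle_{\Lambda_{m_\ast}}$ cyclic in $\gfrak^{m_\ast}(Kv_1)$. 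My goal is to conclude that $(\beta_{1, n}, \beta_{2, n})$ is a C-pair in $\gfrak^n(Kv_1)$, as Lemma \ref{lemma: residue C-pair}(3) will then force $(\tau_1, \tau_2)$ to be a C-pair in $\gfrak^n(K)$, contradicting hypothesis (a).

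The cyclicity of $\langle \alpha, \beta_1 \rangle_{\Lambda_{m_\ast}}$ splits into two cases: $\beta_1 = c \alpha$ for some $c \in \Lambda_{m_\ast}$ (Case A), or $\alpha = \ell c \beta_1$ for some $c \in \Lambda_{m_\ast}$ (Case B). In Case A, the C-pair identity for $(\alpha, \beta_2)$ at level $m_\ast$ immediately yields the C-pair identity for $(\beta_1, \beta_2)$ at level $m_\ast$, which projects to level $n$. In Case B, substituting $\alpha = \ell c \beta_1$ into the C-pair identity for $(\alpha, \beta_2)$ yields $\ell c \cdot D(x) = 0$ in $\Lambda_{m_\ast}$ for every $x \in Kv_1 \setminus \{0, 1\}$, where $D(x) := \beta_1(x) \beta_2(1-x) - \beta_1(1-x) \beta_2(x)$ is the C-pair obstruction. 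If the $\ell$-adic valuation of $\ell c$ is at most $m_\ast - n$, then the cancellation principle (Fact \ref{fact: cancellation}) --- or directly the module-theoretic observation that $D(x)$ must then lie in $\ell^n \Lambda_{m_\ast}$ --- yields $D(x)_n = 0$, as required. Otherwise this valuation exceeds $m_\ast - n \geq m - 1$ (since $m_\ast \geq \Mbf_2(n) = m + n - 1$), so $\alpha \in \ell^m \gfrak^{m_\ast}(Kv_1)$, whence $\alpha_m = 0$, i.e., $\sigma'_m \in \I_{v_1}^m$ is valuative --- a direct contradiction. The main obstacle is this careful $\ell$-adic bookkeeping across the three levels $n$, $m$, and $m_\ast$, which is precisely what motivates the choice $m_\ast = \Mbf_2(\Mbf_1(n))$.

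Once $\sigma'_m$ is valuative, the second assertion follows from Lemma \ref{lemma: C-pair implies decomposition} applied with its $M$ parameter equal to $m = \Mbf_1(n)$ to the C-pairs $(\sigma'_m, \tau'_{i, m})$ in $\gfrak^m(K)$ (which are C-pairs as projections of $(\sigma', \tau'_i)$). This gives $\sigma = (\sigma'_m)_n$ valuative and $\tau_i \in \D_{v_{\sigma'_m}}^n$. Finally, $v_\sigma$ is a coarsening of $v_{\sigma'_m}$ by Lemma \ref{lemma: valuative}, so $\D_{v_{\sigma'_m}}^n \subseteq \D_{v_\sigma}^n$, yielding $\tau_1, \tau_2 \in \D_{v_\sigma}^n = \D_v^n$.
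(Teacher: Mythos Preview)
Your proof is correct and follows the same overall architecture as the paper's: apply the Main Theorem of C-pairs at level $M := \Mbf_2(\Mbf_1(n))$, assume $\sigma'_m$ is non-valuative, use Lemma~\ref{lemma: non-valuative comparable} to obtain comparability of $v_1,v_2$, derive that $(\tau_1,\tau_2)$ is a C-pair (contradiction), and then conclude via Lemma~\ref{lemma: C-pair implies decomposition}.

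The difference is in how the contradiction step is carried out. The paper stays in $\gfrak^M(K)$: it first shows $\langle \sigma'_M,(\tau'_i)_M\rangle$ is non-cyclic, extracts explicit inertia combinations $a_i\sigma'_M + b_i(\tau'_i)_M \in \I_{v_i}^M$ with $(a_i,b_i)$ primitive, uses both of these inside $\D_{v_2}^M$ to obtain the C-pair $(b_1(\tau'_1)_M,\,b_2(\tau'_2)_M)$, and then cancels the $b_i$ via Fact~\ref{fact: cancellation}. You instead pass to the residue field $\gfrak^{m_\ast}(Kv_1)$, where the cyclicity-mod-inertia for $v_1$ becomes literal cyclicity of $\langle \alpha,\beta_1\rangle$, and run a direct case analysis on its generator. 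Your route is slightly more economical in that it never invokes the inertia information from $v_2$ at all (only the inclusion $\D_{v_2}^{m_\ast}\subset \D_{v_1}^{m_\ast}$), whereas the paper's argument uses the inertia combinations from both $v_1$ and $v_2$ symmetrically. Both rely on the same $\ell$-adic bound encoded in $\Mbf_2(\Mbf_1(n))$.

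One minor remark on the last paragraph: Lemma~\ref{lemma: C-pair implies decomposition} applied with parameters $(M,m)=(\Mbf_1(n),n)$ to the valuative element $\sigma'_m$ already yields $\tau_i \in \D_{v_\sigma}^n$ directly, since in that lemma's notation $v_\sigma = v_{(\sigma'_m)_n}$; the extra coarsening step via Lemma~\ref{lemma: valuative} is not needed, though it is harmless.
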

\begin{proof}
	For simplicity, we put $M = \Mbf_2(m) := \Mbf_2(\Mbf_1(n))$.
	Assume for a contradiction that $\sigma'_m$ is non-valuative.
	Then $\sigma'_M$ is also non-valuative.

	By condition (2) and Theorem \ref{theorem: main theorem of C-pairs}, we see that there exist valuations $v_1,v_2$ of $K$ such that the following conditions hold for $i = 1,2$:
	\begin{enumerate}
		\item One has $\sigma'_M,(\tau_i')_M \in \D_{v_i}^M$.
		\item The quotient $\langle \sigma'_M,(\tau_i')_M \rangle_{\Lambda_M} / (\langle \sigma'_M,(\tau_i')_M \rangle_{\Lambda_M} \cap \I_{v_i}^M)$ is $\Lambda_M$-cyclic.
	\end{enumerate}

	\begin{claim*}
		For $i = 1,2$, the subgroup $\langle \sigma'_M,(\tau_i')_M \rangle_{\Lambda_M}$ is non-$\Lambda_M$-cyclic.
	\end{claim*}
	\begin{proof}
		Fix $i \in \{1,2\}$, and assume for a contradiction that $\langle \sigma'_M,(\tau_i')_M \rangle$ is cyclic.
		Since $\Lambda_M$ is a quotient of a DVR, we see that either $\sigma'_M \in \Lambda_M \cdot (\tau_i')_M$ or $(\tau_i')_M \in \Lambda_M \cdot \sigma'_M$.
		If $(\tau_i')_M = a \cdot \sigma'_M$ for some $a \in \Lambda_M$, then $((\tau_1')_M,(\tau_2')_M)$ is a C-pair since $(\sigma'_M,(\tau_j')_M)$ is a C-pair for $j = 1,2$; this implies that $(\tau_1,\tau_2)$ is a C-pair, contradicting condition (1).
		
		On the other hand, suppose that $\sigma'_M = a \cdot (\tau_i')_M$ for some $a \in \Lambda_M$.
		Note that $a \notin \ell^m \cdot \Lambda_M$, for otherwise $\sigma'_m = 0$ would be a valuative element of $\gfrak^m(K)$.
		Since $(\sigma'_M,(\tau_j')_M)$ is a C-pair for $j = 1,2$, it follows from Fact \ref{fact: cancellation} that $(\tau_1,\tau_2)$ is a C-pair, again contradicting condition (1).
		The claim follows.
	\end{proof}

	By the claim above, we know that $\langle \sigma'_M,(\tau_i')_M \rangle$ is non-cyclic for $i = 1,2$.
	On the other hand, since $\langle \sigma'_M,(\tau_i')_M \rangle/ (\langle \sigma'_M,(\tau_i')_M \rangle_{\Lambda_M}\cap \I_{v_i}^M)$ is cyclic, we see that there exist $(a_i,b_i) \in \Lambda_M^2 \smallsetminus \ell \cdot \Lambda_M^2$ such that 
	\[ a_i \cdot \sigma'_M + b_i \cdot (\tau_i')_M \in \I_{v_i}^M. \]

	Since $\sigma'_M \in \D_{v_1}^M \cap \D_{v_2}^M$, and $\sigma'_M$ is non-valuative by assumption, it follows from Lemma \ref{lemma: non-valuative comparable} that $v_1$ and $v_2$ are comparable.
	Without loss of generality, we may assume that $v_1$ is coarser than $v_2$.
	Therefore, $\I_{v_1}^M \subset \I_{v_2}^M$.
	Since $\sigma'_M \in \D_{v_2}^M$, it follows from Lemma \ref{lemma: residue C-pair}(3) that 
	\[ \langle \sigma'_M, \ a_1 \cdot \sigma'_M + b_1 \cdot (\tau_1')_M, \ a_2 \cdot \sigma'_M + b_2 \cdot (\tau_2')_M \rangle_{\Lambda_M} = \langle \sigma'_M, \ b_1\cdot (\tau_1')_M, \ b_2\cdot (\tau_2')_M \rangle_{\Lambda_M} \]
	is a C-set.
	In particular, $(b_1\cdot (\tau_1')_M,b_2\cdot (\tau_2')_M)$ is a C-pair.

	To conclude the proof of the lemma, we first note that $b_i \notin \ell^m \cdot \Lambda_M$ for $i = 1,2$.
	Indeed, if $b_i \in \ell^m \cdot \Lambda_M$, then the element $(a_i \cdot \sigma')_m = (a_i \cdot \sigma'_M + b_i \cdot (\tau_i')_M)_m \in \I_{v_i}^m$ is valuative.
	But, if $b_i \in \ell^m \cdot \Lambda_M$ then $a_i$ must be a unit in $\Lambda_M$ since $(a_i,b_i) \notin \ell \cdot \Lambda_M^2$.
	But this would imply that $\sigma'_m$ is valuative, hence contradicting our original assumption.

	Since $(b_1\cdot (\tau_1')_M, \ b_2\cdot (\tau_2')_M)$ is a C-pair, while $b_1,b_2 \notin \ell^m \cdot \Lambda_M$, we deduce from Fact \ref{fact: cancellation} that $((\tau_1')_m,(\tau_2')_m)$ is a C-pair in $\gfrak^m(K)$.
	Thus $(\tau_1,\tau_2)$ is a C-pair as well, which contradicts condition (1) of the lemma.
	Thus $\sigma'_m$ is valuative, hence $\sigma$ is valuative as well.
	Finally, the fact that $\tau_1,\tau_2 \in \D_{v_\sigma}^n$ follows from Lemma \ref{lemma: C-pair implies decomposition}.
\end{proof}

We will also need to reduce some arguments/constructions to the case $n = 1$, which will be accomplished using the following two lemmas.
Since it will be used several times in these two lemmas, we recall from Remark \ref{remark: centers centralizers} that
\[ \I^1_1(\gfrak^1(K)) = \{\sigma \in \gfrak^1(K) \ : \ \text{For all $\tau \in \gfrak^1(K)$, $(\sigma,\tau)$ is a C-pair}\}\]
is the ``C-center'' of $\gfrak^1(K)$.
\begin{lemma}
\label{lemma: n equals 1}
	Let $m \in \Nbar$ be given, and let $K$ be a field such that $\mu_{2\ell^m} \subset K$.
	Then the following hold:
	\begin{enumerate}
		\item Suppose that $\I^1_1(\gfrak^1(K)) \neq \gfrak^1(K)$. 
		Then there exists a $1$-visible valuation $v$ of $K$ such that $\I^1_1(\gfrak^1(K)) = \I_v^1$ and $\gfrak^1(K) = \D_v^1$.
		\item Suppose that $\I^1_1(\gfrak^1(K)) = \gfrak^1(K)$. 
		Then there exists a valuation $v$ of $K$ such that $\D_v^1 = \gfrak^1(K)$ and such that $\gfrak^1(Kv)$ is cyclic.
		Moreover, in this case $\gfrak^m(K)$ is a C-set.
	\end{enumerate}
\end{lemma}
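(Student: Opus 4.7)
The plan is to tackle the two parts separately using the Key Lemma (Lemma \ref{lemma: key lemma}) and the valuative-subset machinery of Section \ref{section: valuative subsets}. A key simplification is that for $n = 1$ one has $\Rbf(1) = \Mbf_1(1) = 1$, so every element of $\gfrak^1(K)$ serves as its own $N$-lift in the Key Lemma.

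For part (1), I will set $\Sigma := \I^1_1(\gfrak^1(K))$ and use the hypothesis $\Sigma \neq \gfrak^1(K)$ to fix $\tau_1,\tau_2 \in \gfrak^1(K)$ that do not form a C-pair. For any $\sigma \in \Sigma$, both $(\sigma,\tau_1)$ and $(\sigma,\tau_2)$ are C-pairs by the defining property of the C-center, so the Key Lemma produces a valuation $v_\sigma$ with $\sigma \in \I_{v_\sigma}^1$ and $\tau_1,\tau_2 \in \D_{v_\sigma}^1$. Since any two elements of $\Sigma$ form a C-pair, Lemma \ref{lemma: C-pair implies comparable} ensures $v_{\sigma_1}$ and $v_{\sigma_2}$ are comparable for all $\sigma_1,\sigma_2 \in \Sigma$, and Lemma \ref{lemma: supremum} delivers the supremum $v := v_\Sigma$ with $\Sigma \subset \I_v^1$ and $\D_v^1 = \bigcap_{\sigma \in \Sigma} \D_{v_\sigma}^1$. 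Applying Lemma \ref{lemma: C-pair implies decomposition} to each $\sigma \in \Sigma$ and an arbitrary $\tau \in \gfrak^1(K)$ yields $\tau \in \D_{v_\sigma}^1$, hence $\D_v^1 = \gfrak^1(K)$; the reverse inclusion $\I_v^1 \subset \Sigma$ follows because any $\sigma \in \I_v^1$ has $\sigma_v = 0$ and so forms a C-pair with every $\tau \in \gfrak^1(K) = \D_v^1$ by Lemma \ref{lemma: residue C-pair}(3).

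For the visibility of $v$, (V1) is immediate from the remark following Lemma \ref{lemma: valuative}, and (V2) uses that the non-C-pair $(\tau_1,\tau_2)$ in $\D_v^1 \times \D_v^1$ descends to a non-C-pair in $\gfrak^1(Kv)$ by Lemma \ref{lemma: residue C-pair}(3). The main obstacle is verifying (V3): if a non-trivial $w$ on $Kv$ with $\D_w^1 = \gfrak^1(Kv)$ existed, then the same C-center argument applied to $\gfrak^1(Kv) = \gfrak^1(K)/\Sigma$ would show that the C-center of $\gfrak^1(Kv)$ is trivial, forcing $\I_w^1 = 0$; combined with the extremality property of $v_\Sigma$ from Lemma \ref{lemma: valuative} (namely, $v_\Sigma(\Sigma^\perp)$ contains no non-trivial convex subgroup of $v_\Sigma K$) and an analysis of how such $w$ would introduce forbidden $\ell$-divisible convex subgroups upon refining $v$, the existence of $w$ leads to a contradiction.

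For part (2), the hypothesis is that $\gfrak^1(K)$ itself is a C-set. If $\gfrak^1(K)$ is cyclic, the trivial valuation works; otherwise, the C-set condition translates to a rigidity statement on $K^\times/K^{\times\ell}$, and combining the Main Theorem of C-pairs (Theorem \ref{theorem: main theorem of C-pairs}) with the rigid-element machinery of \cite{Arason1987} (as packaged in \cite{Topaz2012c}) produces a valuation $v$ with $\D_v^1 = \gfrak^1(K)$ and $\gfrak^1(Kv)$ cyclic. For the moreover clause, Lemma \ref{lemma: m-lifts}(5) promotes $\D_v^1 = \gfrak^1(K)$ to $\D_v^m = \gfrak^m(K)$, and Lemma \ref{lemma: m-lifts}(2) shows $\gfrak^m(Kv)$ has the same rank as $\gfrak^1(Kv)$, hence is cyclic and therefore a C-set; Lemma \ref{lemma: residue C-pair}(3), using $\sigma(-1) = 0$ for all $\sigma \in \gfrak^m(K)$ (which follows from $\mu_{2\ell^m} \subset K$), then lifts this to the assertion that $\gfrak^m(K) = \D_v^m$ is a C-set.
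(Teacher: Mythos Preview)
Your approach for part (1) is essentially the paper's proof, and the identifications $\D_v^1 = \gfrak^1(K)$ and $\I_v^1 = \Sigma$, as well as (V1) and (V2), are handled correctly. The problem is your treatment of (V3). You correctly observe that the C-center of $\gfrak^1(Kv)$ is trivial (via Lemma \ref{lemma: residue C-pair}(3) and the equality $\I_v^1 = \Sigma$), and that this forces $\I_w^1 = 0$ for any $w$ on $Kv$ with $\D_w^1 = \gfrak^1(Kv)$. \emph{That is exactly condition (V3)}, and you should stop there. The subsequent appeal to ``forbidden $\ell$-divisible convex subgroups'' and the claim that ``the existence of $w$ leads to a contradiction'' misreads what (V3) asserts: (V3) does not say that no such $w$ exists, only that any such $w$ has trivial minimized inertia. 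The paper's argument is the same as your correct half, phrased on the $K$-side: lift $w$ to $w_0 \circ v$ on $K$, observe $\D_{w_0 \circ v}^1 = \gfrak^1(K)$, conclude $\I_{w_0 \circ v}^1 \subset \Sigma = \I_v^1$, and hence $\I_{w_0}^1 = 1$.

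For part (2), your ``moreover'' clause is fine and matches the paper. But the construction of $v$ is too vague to count as a proof: saying that Theorem \ref{theorem: main theorem of C-pairs} ``combined with the rigid-element machinery \ldots\ produces a valuation $v$'' does not explain which valuation or why it satisfies $\D_v^1 = \gfrak^1(K)$ and $\gfrak^1(Kv)$ cyclic. (Also, Theorem \ref{theorem: main theorem of C-pairs} already encapsulates the rigid-element input, so the separate appeal to \cite{Arason1987} is redundant.) The paper's construction is concrete and uses only the tools already at hand: let $\Sigma$ be the set of all valuative elements of $\gfrak^1(K)$; Theorem \ref{theorem: main theorem of C-pairs} (with $m=M=1$) forces $\gfrak^1(K)/\langle\Sigma\rangle$ to be cyclic; Lemmas \ref{lemma: C-pair implies comparable} and \ref{lemma: supremum} make $\Sigma$ valuative with $v := v_\Sigma$; and Lemma \ref{lemma: C-pair implies decomposition} gives $\D_v^1 = \gfrak^1(K)$, whence $\gfrak^1(Kv) \cong \gfrak^1(K)/\I_v^1$ is a quotient of the cyclic group $\gfrak^1(K)/\langle\Sigma\rangle$.
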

\begin{proof}
	\noindent\emph{Proof of (1).}
	Put $\I = \I^1_1(\gfrak^1(K))$.
	Since $\I \neq \gfrak^1(K)$, it follows from the definition of $\I^1_1$ that $\gfrak^1(K)$ is not a C-set.
	As such, let $\tau_1,\tau_2 \in \gfrak^1(K)$ be two elements such that $(\tau_1,\tau_2)$ is not a C-pair.
	By Lemma \ref{lemma: key lemma}, it follows that every element of $\I$ is valuative.
	Moreover, for every element $\tau \in \gfrak^1(K)$ and every $\sigma \in \I$, one has $\tau \in \D_{v_\sigma}^1$ by Lemma \ref{lemma: C-pair implies decomposition}.
	Hence $\D_{v_\sigma}^1 = \gfrak^1(K)$ for all $\sigma \in \I$.

	On the other hand, for every $\sigma,\tau \in \I$, the pair $(\sigma,\tau)$ is a C-pair by the definition of $\I$.
	Thus, by Lemma \ref{lemma: C-pair implies comparable}, it follows that the valuations $(v_\sigma)_{\sigma \in \I}$ are pairwise comparable.
	Hence $\I$ is valuative by Lemma \ref{lemma: supremum}; we put $v := v_{\I}$.
	By Lemma \ref{lemma: supremum} we deduce that $\gfrak^1(K) = \D_v^1$.

	Recall that $\I \subset \I_v^1$ by the definition of $v_{\I}$.
	On other hand, if $\sigma \in \I_v^1$ and $\tau \in \gfrak^1(K) = \D_v^1$, then by Lemma \ref{lemma: residue C-pair}(3) we see that $(\sigma,\tau)$ is a C-pair.
	Hence $\sigma \in \I$ by the definition of $\I^1_1$.
	Namely, one has $\I = \I_v^1$.

	To conclude the proof of (1), we must show that $v$ is $1$-visible.
	Since $v = v_{\I}$, it follows from Lemma \ref{lemma: valuative} that $vK$ contains no non-trivial $\ell$-divisible convex subgroups.
	Also, we know that $\gfrak^1(Kv)$ is not a C-set for otherwise $\gfrak^1(K) = \D_v^1$ would be a C-set by Lemma \ref{lemma: residue C-pair}(3).

	Finally, suppose that $w_0$ is a valuation of $Kv$ such that $\D_{w_0}^1 = \gfrak^1(Kv)$.
	Consider $w := w_0 \circ v$, and note that $\D_w^1 = \gfrak^1(K)$ by Lemma \ref{lemma: residue C-pair}(2).
	But then Lemma \ref{lemma: residue C-pair}(3) implies that $\I_w^1 \subset \I$, by the definition of $\I^1_1$, similarly to the argument above which shows that $\I_v^1 \subset \I$.
	On the other hand $\I = \I_v^1 \subset \I_w^1$ since $v$ is coarser than $w$.
	Thus $\I_v^1 = \I_w^1$, and therefore $\I_{w_0}^1 = 1$ by Lemma \ref{lemma: residue C-pair}(2).

	\vskip 10pt
	\noindent\emph{Proof of (2).}
	The condition $\I^1_1(\gfrak^1(K)) = \gfrak^1(K)$ is equivalent to saying that $\gfrak^1(K)$ is a C-set.
	Let $\Sigma$ denote the subset of $\gfrak^1(K)$ consisting of all valuative elements of $\gfrak^1(K)$.
	By Theorem \ref{theorem: main theorem of C-pairs}, it follows that $\gfrak^1(K)/\langle \Sigma \rangle_{\Lambda_1}$ is cyclic.
	Moreover, the valuations $(v_\sigma)_{\sigma \in \Sigma}$ are pairwise comparable by Lemma \ref{lemma: C-pair implies comparable}.
	Thus, by Lemma \ref{lemma: supremum}, we deduce that $\Sigma$ is itself valuative, hence $\Sigma = \langle \Sigma \rangle_{\Lambda_1}$ by the way we defined $\Sigma$.

	On the other hand, for all $\sigma \in \Sigma$, it follows from Lemma \ref{lemma: C-pair implies decomposition} that $\gfrak^1(K) = \D_{v_\sigma}^1$.
	Letting $v := v_\Sigma$ denote the valuation associated to $\Sigma$, we deduce that $\D_v^1 = \gfrak^1(K)$ by Lemma \ref{lemma: supremum}.
	Moreover, $\D_v^1/\I_v^1$ is cyclic since $\Sigma \subset \I_v^1$.
	This implies that $\gfrak^1(Kv)$ is cyclic by Lemma \ref{lemma: residue C-pair}(1).

	To conclude the proof of the Lemma, we must prove that $\gfrak^m(K)$ is a C-set.
	By Lemma \ref{lemma: m-lifts}(5), we have $\D_v^m = \gfrak^m(K)$ and by Lemma \ref{lemma: m-lifts}(2), $\gfrak^m(Kv)$ is $\Lambda_m$-cyclic.
	Thus, by Lemma \ref{lemma: residue C-pair}(1), the quotient $\D_v^m/\I_v^m$ is $\Lambda_m$-cyclic.
	This implies that $\gfrak^m(K)$ is a C-set by Lemma \ref{lemma: residue C-pair}(3).
	This concludes the proof of the lemma.
\end{proof}

\begin{lemma}
\label{lemma: visible reduction to n equals 1}
	Let $m \in \Nbar$ be given, and let $K$ be a field such that $\mu_{2\ell^m} \subset K$.
	Let $v$ be a valuation of $K$.
	Then the following hold:
	\begin{enumerate}
		\item Suppose that $vK$ contains no non-trivial $\ell$-divisible convex subgroups, and that $w_0$ is an $m$-visible valuation of $Kv$.
		Then $w_0 \circ v$ is an $m$-visible valuation of $K$.
		\item If $v$ is $1$-visible then $v$ is $m$-visible.
	\end{enumerate}
\end{lemma}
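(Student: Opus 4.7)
The plan is to verify each of the three defining conditions (V1), (V2), (V3) of $m$-visibility (from \S\ref{subsection: intro / visible valuations}) in turn for the relevant valuation in each part. I follow the same reading of (V3) that the proof of Lemma \ref{lemma: n equals 1}(1) uses, namely that $w$ being as in its hypothesis forces $\I_w^m = 1$ (in $\gfrak^m(Kv)$).

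For part (1), set $w := w_0 \circ v$, so that $Kw = (Kv)w_0$. The composition produces a short exact sequence of ordered abelian groups
\[ 0 \to w_0(Kv) \to wK \to vK \to 0 \]
in which $w_0(Kv)$ sits as a convex subgroup. Since the convex subgroups of a totally ordered abelian group are linearly ordered by inclusion, every convex subgroup $\Delta$ of $wK$ satisfies either $\Delta \subseteq w_0(Kv)$ or $w_0(Kv) \subseteq \Delta$. If $\Delta$ is non-trivial and $\ell$-divisible, the first case makes $\Delta$ a non-trivial $\ell$-divisible convex subgroup of $w_0(Kv)$, contradicting (V1) for $w_0$; the second, when $w_0(Kv) \subsetneq \Delta$, makes $\Delta/w_0(Kv)$ a non-trivial $\ell$-divisible convex subgroup of $vK$, contradicting the hypothesis on $v$. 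This gives (V1) for $w$. Since $\gfrak^m(Kw) = \gfrak^m((Kv)w_0)$, condition (V2) for $w$ is literally (V2) for $w_0$. For (V3), any valuation $w'$ of $Kw = (Kv)w_0$ is a valuation of the residue field of $w_0$, and the implication $\D_{w'}^m = \gfrak^m(Kw) \Rightarrow \I_{w'}^m = 1$ is precisely (V3) applied to $w_0$.

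For part (2), assume $v$ is $1$-visible. Condition (V1) does not involve $m$ and so carries over immediately. For (V2), Lemma \ref{lemma: m-lifts}(1) provides a surjective group homomorphism $\gfrak^m(Kv) \twoheadrightarrow \gfrak^1(Kv)$, and the C-pair defining identity $\sigma(x)\tau(1-x) = \sigma(1-x)\tau(x)$ is preserved by the coefficient reduction $(\cdot)_1$. Hence if every pair in $\gfrak^m(Kv)$ were a C-pair, then, using surjectivity, every pair in $\gfrak^1(Kv)$ would be a C-pair too, contradicting (V2) at level $1$. For (V3), let $w$ be a valuation of $Kv$ with $\D_w^m = \gfrak^m(Kv)$. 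By Lemma \ref{lemma: m-lifts}(5) this is equivalent to $\D_w^1 = \gfrak^1(Kv)$, so (V3) at level $1$ yields $\I_w^1 = 1$; Lemma \ref{lemma: m-lifts}(4) then upgrades this to $\I_w^m = 1$, as required.

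The only non-formal step is the convex-subgroup bookkeeping in (V1) of part (1); everything else is either a direct transcription of the hypothesis or a one-line appeal to the surjectivity/lifting content of Lemma \ref{lemma: m-lifts}. In particular, no new analytic input beyond Section \ref{section: Minimized Decomposition Theory} is needed.
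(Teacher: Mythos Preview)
Your proof is correct and follows essentially the same approach as the paper's. The paper handles (V1) in part (1) more tersely (just citing the short exact sequence without spelling out the convex-subgroup dichotomy), and in part (2) it invokes the same pieces of Lemma~\ref{lemma: m-lifts} in the same order; your added detail and your explicit remark about the intended reading of (V3) are both accurate.
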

\begin{proof}
	\noindent\emph{Proof of (1).}
	Put $w := w_0 \circ v$.
	Since $vK$ and $w_0(Kv)$ contain no non-trivial $\ell$-divisible convex subgroups, the same must be true for $wK$ by considering the short exact sequence
	\[ 1 \rightarrow w_0(Kv) \rightarrow wK \rightarrow vK \rightarrow 1. \]
	The other two conditions required for $w$ to be $m$-visible are clear since the residue field of $w$ is the same as the residue field of $w_0$.

	\vskip 10pt
	\noindent\emph{Proof of (2).}
	Suppose that $v$ is $1$-visible.
	Then $vK$ has no $\ell$-divisible convex subgroups.
	Also, since $\gfrak^1(Kv)$ is not a C-set, it follows from Lemma \ref{lemma: m-lifts}(1) that $\gfrak^m(Kv)$ is not a C-set either.
	Finally, suppose that $w$ is a valuation of $Kv$ such that $\gfrak^m(Kv) = \D_w^m$.
	Then $\gfrak^1(Kv) = \D_w^1$ hence $\I_w^1 = 1$ since $v$ is $1$-visible.
	But then $\I_w^m = 1$ by Lemma \ref{lemma: m-lifts}(4).
\end{proof}

\subsection{Proof of Theorem \ref{maintheorem: definability of visible inertia set}}
We now turn to the proof of Theorem \ref{maintheorem: definability of visible inertia set}, and we use the notation introduced in the statement of the theorem.

\vskip 10pt
\noindent$(1) \Rightarrow (2)$.
Let $v$ be an $n$-visible valuation of $K$ such that $\sigma \in \I_v^n$.
By condition (V2), we know that $\gfrak^n(Kv)$ is not a C-set, and thus by Lemma \ref{lemma: residue C-pair}(3) we deduce that $\D_v^n$ is not a C-set.
Let $\tau_1,\tau_2 \in \D_v^n$ be two elements such that $(\tau_1,\tau_2)$ is not a C-pair.

By Lemma \ref{lemma: m-lifts}(3), we can choose $N$-lifts $\sigma',\tau_1',\tau_2'$ of $\sigma,\tau_1,\tau_2$ such that $\sigma' \in \I_v^N$ and $\tau_1',\tau_2' \in \D_v^N$.
Finally, by Lemma \ref{lemma: residue C-pair}(3), we see that $(\sigma',\tau_1')$ and $(\sigma',\tau_2')$ are both C-pairs, as required.

\vskip 10pt
\noindent$(2)\Rightarrow(1)$.
By Lemma \ref{lemma: key lemma}, we know that $\sigma$ is valuative, and that $\tau_1,\tau_2 \in \D_{v_\sigma}^n$.
We will show that $v_\sigma$ is a coarsening of an $n$-visible valuation $v$, which means that 
\[ \sigma \in \I_{v_\sigma}^n \subset \I_v^n \]
and therefore $\sigma \in \Ivis^n(K)$.

Consider $\I := \I^1_1(\gfrak^1(Kv_\sigma))$.
If $\I = \gfrak^1(Kv_\sigma)$, then Lemma \ref{lemma: n equals 1}(2) implies that $\gfrak^n(Kv)$ is a C-set, hence $\D_{v_\sigma}^n$ is a C-set by Lemma \ref{lemma: residue C-pair}(3).
But this contradicts the fact that $\tau_1,\tau_2 \in \D_{v_\sigma}^n$ and $(\tau_1,\tau_2)$ is not a C-pair.

Thus $\I \neq \gfrak^1(Kv_\sigma)$.
By Lemma \ref{lemma: n equals 1}(1), there exists a $1$-visible valuation $w_0$ of $Kv_\sigma$.
But by Lemma \ref{lemma: visible reduction to n equals 1}(2), we see that $w_0$ is $n$-visible.
Finally, by Lemma \ref{lemma: visible reduction to n equals 1}(1), we deduce that $v := w_0 \circ v_\sigma$ is $n$-visible, since $v_\sigma K$ contains no non-trivial $\ell$-divisible convex subgroups by Lemma \ref{lemma: valuative}.
This concludes the proof of Theorem \ref{maintheorem: definability of visible inertia set}.

\subsection{Proof of Theorem \ref{maintheorem: definability of visible inertia groups}}

We now turn to the proof of Theorem \ref{maintheorem: definability of visible inertia groups}, and we use the notation introduced in the statement of the theorem.

\vskip 10pt
\noindent$(1) \Rightarrow (2)$.
Suppose that $v$ is an $n$-visible valuation of $K$ such that $\Sigma \subset \I_v^n$.
By condition (V2), we know that $\gfrak^n(Kv)$ is not a C-set, so Lemma \ref{lemma: residue C-pair}(3) implies that $\D_v^n$ is not a C-set.
Let $\tau_1,\tau_2 \in \D_v^n$ be two elements such that $(\tau_1,\tau_2)$ is not a C-pair.

Let $\sigma,\tau \in \Sigma$ be given.
By Lemma \ref{lemma: m-lifts}(3), there exist $N$-lifts $\sigma',\tau',\tau_1',\tau_2'$ of $\sigma,\tau,\tau_1,\tau_2$ such that $\sigma',\tau' \in \I_v^N$ and $\tau_1',\tau_2' \in \D_v^N$.
By Lemma \ref{lemma: residue C-pair}(3), we deduce that the following conditions hold true:
\begin{enumerate}
	\item $(\sigma',\tau')$ is a C-pair.
	\item $(\sigma',\tau_1')$ and $(\sigma',\tau_2')$ are both C-pairs.
\end{enumerate}

\vskip 10pt
\noindent$(2) \Rightarrow (1)$.
First of all, conditions (b),(c) and Theorem \ref{maintheorem: definability of visible inertia set} imply that $\Sigma \subset \Ivis^n(K)$.
Thus every element of $\Sigma$ is valuative.

By condition (a) and Lemma \ref{lemma: C-pair implies comparable}, we see that the valuations in the collection $(v_\sigma)_{\sigma \in \Sigma}$ are pairwise comparable.
By Lemma \ref{lemma: supremum}, we see that $\Sigma$ is valuative, and that 
\[ \D_{v_\Sigma}^n = \bigcap_{\sigma \in \Sigma} \D_{v_\sigma}^n. \]
Moreover, by Lemma \ref{lemma: key lemma} and conditions (b) and (c), we see that $\tau_1,\tau_2 \in \D_{v_\sigma}^n$ for all $\sigma \in \Sigma$.
Therefore $\tau_1,\tau_2 \in \D_{v_\Sigma}^n$ by Lemma \ref{lemma: supremum}.

Now consider $\I := \I^1_1(\gfrak^1(Kv_\Sigma))$.
If $\I = \gfrak^1(Kv_\Sigma)$ then Lemma \ref{lemma: n equals 1}(2) implies that $\gfrak^n(Kv_\Sigma)$ is a C-set, and therefore Lemma \ref{lemma: residue C-pair}(3) implies that $\D_{v_\Sigma}^n$ is a C-set.
But this contradicts the fact that $\tau_1,\tau_2 \in \D_{v_\Sigma}^n$ and $(\tau_1,\tau_2)$ is not a C-pair.

Thus $\I \neq \gfrak^1(Kv_\Sigma)$.
By Lemma \ref{lemma: n equals 1}(1), we see that there exists a $1$-visible valuation $w_0$ of $Kv_\Sigma$.
By Lemma \ref{lemma: visible reduction to n equals 1}(2), we see that $w_0$ is $n$-visible, and by Lemma \ref{lemma: visible reduction to n equals 1}(1), we deduce that $v := w_0 \circ v_\Sigma$ is $n$-visible since $v_\Sigma K$ contains no non-trivial $\ell$-divisible convex subgroups by Lemma \ref{lemma: valuative}.
Therefore, one has $\Sigma \subset \I_{v_\Sigma}^n \subset \I_v^n$, with $v$ an $n$-visible valuation.
This concludes the proof of Theorem \ref{maintheorem: definability of visible inertia groups}.

\subsection{Proof of Theorem \ref{maintheorem: definability of visible decomposition groups}}
We now turn to the proof of Theorem \ref{maintheorem: definability of visible decomposition groups}, and we use the notation introduced in the statement of the theorem.

\vskip 10pt
\noindent\emph{Proof of (1).}
First of all, we note that $\Sigma$ is valuative by assumption.
We put $v_0 := v_\Sigma$.
By Lemma \ref{lemma: valuative}, we see that $v_0$ is a coarsening of some $n$-visible valuation $v_1$.
Thus $\gfrak^n(Kv_1)$ is not a C-set, so Lemma \ref{lemma: residue C-pair}(3) implies that $\D_{v_1}^n$ is not a C-set.
Hence $\D_{v_0}^n$ is not a C-set, since $\D_{v_1}^n \subset \D_{v_0}^n$.

\begin{claim*}
	One has $\D_{v_0}^n = \D^N_n(\Sigma)$ and $\Sigma \subset \I^N_n(\D^N_n(\Sigma))$.
\end{claim*}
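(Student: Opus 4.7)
The plan is to prove the two statements of the claim separately: first, establish the equality $\D_{v_0}^n = \D^N_n(\Sigma)$ by verifying both inclusions, and then derive $\Sigma \subset \I^N_n(\D^N_n(\Sigma))$ directly.

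For the inclusion $\D^N_n(\Sigma) \subset \D_{v_0}^n$, I would fix $\tau \in \D^N_n(\Sigma)$ and, for each $\sigma \in \Sigma$, unpack the definition to obtain witnesses $\tau_1,\tau_2 \in \gfrak^n(K)$ and $N$-lifts $\sigma',\tau',\tau_1',\tau_2'$. The Key Lemma (Lemma \ref{lemma: key lemma}) applies directly with parameters $n$ and $N$, giving that $\sigma'_{\Mbf_1(n)}$ is valuative in $\gfrak^{\Mbf_1(n)}(K)$ and that $\tau_1,\tau_2 \in \D_{v_\sigma}^n$. To conclude $\tau \in \D_{v_\sigma}^n$, I would truncate the C-pair $(\sigma',\tau')$ from level $N$ down to level $\Mbf_1(n)$ (C-pairs are preserved under truncation) and invoke Lemma \ref{lemma: C-pair implies decomposition} with $m = n$ and $M = \Mbf_1(n)$, since $\sigma'_{\Mbf_1(n)}$ is now valuative and paired with $\tau'_{\Mbf_1(n)}$. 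As each $v_\sigma$ is a coarsening of $v_\Sigma = v_0$ (Lemma \ref{lemma: valuative}), the family $\{v_\sigma\}_{\sigma\in\Sigma}$ is pairwise comparable, so Lemma \ref{lemma: supremum} yields $\D_{v_0}^n = \bigcap_{\sigma\in\Sigma} \D_{v_\sigma}^n \ni \tau$.

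For the reverse inclusion $\D_{v_0}^n \subset \D^N_n(\Sigma)$, I would exploit the fact, established just before the claim, that $v_0$ is a coarsening of some $n$-visible valuation $v_1$, so that $\Sigma \subset \I_{v_0}^n \subset \I_{v_1}^n$ and $\D_{v_0}^n \subset \D_{v_1}^n$. Condition (V2) says $\gfrak^n(Kv_1)$ is not a C-set, so lifting a non-C-pair via the surjection in Lemma \ref{lemma: residue C-pair}(1) and using Lemma \ref{lemma: residue C-pair}(3) produces $\tau_1,\tau_2 \in \D_{v_1}^n$ with $(\tau_1,\tau_2)$ not a C-pair. For an arbitrary $\tau \in \D_{v_0}^n$ and $\sigma \in \Sigma$, I would apply Lemma \ref{lemma: m-lifts}(3) to obtain $N$-lifts $\sigma' \in \I_{v_1}^N$ and $\tau',\tau_1',\tau_2' \in \D_{v_1}^N$; since $\sigma'$ restricts trivially on $U_{v_1}$, Lemma \ref{lemma: residue C-pair}(3) makes $(\sigma',\tau')$, $(\sigma',\tau_1')$, $(\sigma',\tau_2')$ all C-pairs, which exhibits $\tau \in \D^N_n(\Sigma)$.

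For the final containment $\Sigma \subset \I^N_n(\D^N_n(\Sigma)) = \I^N_n(\D_{v_0}^n)$: given $\sigma \in \Sigma \subset \I_{v_0}^n$, Lemma \ref{lemma: m-lifts}(3) furnishes an $N$-lift $\sigma' \in \I_{v_0}^N$, and for any $\tau \in \D_{v_0}^n$ the same lemma gives an $N$-lift $\tau' \in \D_{v_0}^N$; Lemma \ref{lemma: residue C-pair}(3) then makes $(\sigma',\tau')$ a C-pair. The main obstacle is simply careful bookkeeping of the truncation levels: the Key Lemma only certifies valuativity of $\sigma'_{\Mbf_1(n)}$ rather than of $\sigma'$ itself, which forces the intermediate truncation to level $\Mbf_1(n)$ before Lemma \ref{lemma: C-pair implies decomposition} can be applied, and this is exactly why the choice $N \geq \Rbf(n) = \Nbf(\Mbf_2(\Mbf_1(n)))$ comfortably dominates all the thresholds $\Mbf_1(n)$ and $\Mbf_1(\Mbf_1(n))$ used along the way.
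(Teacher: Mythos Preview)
Your argument for $\D^N_n(\Sigma) \subset \D_{v_0}^n$ and for $\Sigma \subset \I^N_n(\D^N_n(\Sigma))$ matches the paper's exactly, including the careful truncation to level $\Mbf_1(n)$ before invoking Lemma~\ref{lemma: C-pair implies decomposition}.

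There is, however, a genuine error in your proof of $\D_{v_0}^n \subset \D^N_n(\Sigma)$. You write that since $v_0$ is a coarsening of the $n$-visible valuation $v_1$, one has $\D_{v_0}^n \subset \D_{v_1}^n$; but the inclusion goes the other way. When $v_0$ is coarser than $v_1$ one has $U_{v_0}^1 \subset U_{v_1}^1$, hence $\D_{v_1}^n \subset \D_{v_0}^n$. Consequently, for a general $\tau \in \D_{v_0}^n$ there is no reason to expect an $N$-lift $\tau' \in \D_{v_1}^N$, and your construction of the witnesses fails.

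The fix is simple and is what the paper does: work entirely at $v_0$. The text immediately preceding the claim already establishes that $\D_{v_0}^n$ is not a C-set (precisely because $\D_{v_1}^n \subset \D_{v_0}^n$ and $\D_{v_1}^n$ is not a C-set), so you can choose $\tau_1,\tau_2 \in \D_{v_0}^n$ with $(\tau_1,\tau_2)$ not a C-pair. Then, since $\sigma \in \Sigma \subset \I_{v_0}^n$ and $\tau,\tau_1,\tau_2 \in \D_{v_0}^n$, Lemma~\ref{lemma: m-lifts}(3) produces $N$-lifts $\sigma' \in \I_{v_0}^N$ and $\tau',\tau_1',\tau_2' \in \D_{v_0}^N$, and Lemma~\ref{lemma: residue C-pair}(3) gives the required C-pairs. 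Your detour through $v_1$ is unnecessary and is what introduces the mistake.
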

\begin{proof}
	Let $\tau \in \D_{v_0}^n$ be given, and suppose that $\sigma \in \Sigma$ is an arbitrary element.
	By the observation above, there exist $\tau_1,\tau_2 \in \D_{v_0}^n$ such that $(\tau_1,\tau_2)$ is not a C-pair.
	On the other hand, by Lemma \ref{lemma: m-lifts}(3), there exist $N$-lifts $\sigma',\tau',\tau_1',\tau_2'$ of $\sigma,\tau,\tau_1,\tau_2$ such that $\sigma' \in \I_{v_0}^N$ and $\tau',\tau_1',\tau_2' \in \D_{v_0}^N$.
	By Lemma \ref{lemma: residue C-pair}(3), we see that the three pairs $(\sigma',\tau')$, $(\sigma',\tau_1')$ and $(\sigma',\tau_2')$ are all C-pairs.
	Thus $\tau \in \D^N_n(\Sigma)$.

	Conversely, suppose that $\tau \in \D^N_n(\Sigma)$ is given.
	Let $\sigma \in \Sigma$ be an element, and let $\tau_1,\tau_2$ and $\sigma',\tau',\tau_1',\tau_2'$ be as in the definition of $\D^N_n(\Sigma)$.
	Put $m = \Mbf_1(n)$.
	By Lemma \ref{lemma: key lemma}, we see that $\sigma'_m$ is valuative.
	Thus, by Lemma \ref{lemma: C-pair implies decomposition} we deduce that $\tau$ is an element of $\D_{v_\sigma}^n$.
	But this is true for all $\sigma \in \Sigma$, and thus
	\[ \D^N_n(\Sigma) \subset \bigcap_{\sigma \in \Sigma} \D_{v_\sigma}^n. \]
	We deduce that $\D^N_n(\Sigma) \subset \D_{v_0}^n$ by Lemma \ref{lemma: supremum}.
	Hence $\D^N_n(\Sigma) = \D_{v_0}^n$.

	Now suppose that $\sigma \in \Sigma$ is given.
	Then by Lemma \ref{lemma: m-lifts}(3), there exists an $N$-lift $\sigma'$ of $\sigma$ such that $\sigma' \in \I_{v_0}^N$.
	Also, by Lemma \ref{lemma: m-lifts}(3), for every $\tau \in \D_{v_0}^n = \D^N_n(\Sigma)$, there exists some $N$-lift $\tau'$ of $\tau$ such that $\tau' \in \D_{v_0}^N$.
	By Lemma \ref{lemma: residue C-pair}(3), we see that $(\sigma',\tau')$ is a C-pair, thus $\sigma \in \I^N_n(\D_{v_0}^n)$.
	Hence, $\Sigma \subset \I^N_n(\D_{v_0}^n) = \I^N_n(\D^N_n(\Sigma))$.
\end{proof}

To conclude the proof of the first part of the theorem, we must prove that there exists an $n$-visible valuation $v$ such that $\D_v^n = \D_{v_0}^n$ and such that $\I_v^n = \I^N_n(\D_{v_0}^n)$.
For simplicity, let $\D$ denote the set $\D_{v_0}^n = \D^N_n(\Sigma)$, and let $\I$ denote the set $\I^N_n(\D)$.
First we will show that $\I$ is valuative.

Let $\sigma \in \I$ be given.
Since $\D$ is not a C-set, there exist $\tau_1,\tau_2 \in \D$ such that $(\tau_1,\tau_2)$ is not a C-pair.
On the other hand, by the definition of $\I$, there exists an $N$-lift $\sigma'$ of $\sigma$ and $N$-lifts $\tau_1',\tau_2'$ of $\tau_1,\tau_2$ such that $(\sigma',\tau_1')$ and $(\sigma',\tau_2')$ are both C-pairs.
By Lemma \ref{lemma: key lemma}, we deduce that $\sigma$ is valuative.

On the other hand, if $\sigma,\tau \in \I$, then using the definition of $\I$ again, we see that there exist $N$-lifts $\sigma',\tau'$ of $\sigma,\tau$ such that $(\sigma',\tau')$ is a C-pair.
Thus, by Lemma \ref{lemma: C-pair implies comparable}, we see that the collection of valuations $(v_\sigma)_{\sigma \in \I}$ is pair-wise comparable.
By Lemma \ref{lemma: supremum}, we deduce that $\I$ is valuative and that $v := v_{\I}$ is the valuation-theoretic supremum of the collection $(v_\sigma)_{\sigma \in \I}$.

\begin{claim*}
	One has $\D = \D_v^n$.	
\end{claim*}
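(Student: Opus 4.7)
The plan is to establish the two inclusions $\D \subseteq \D_v^n$ and $\D_v^n \subseteq \D$ separately. Both directions will rely on the containment $\Sigma \subseteq \I$ already verified in the previous claim of this proof, together with two applications of Lemma \ref{lemma: supremum}.

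For the forward inclusion $\D \subseteq \D_v^n$, I would first note that since $\Sigma \subseteq \I$, the valuation-theoretic supremum $v = v_\I$ of the pairwise comparable collection $(v_\sigma)_{\sigma \in \I}$ is a refinement of the supremum $v_0 = v_\Sigma$ of the sub-collection indexed by $\Sigma$. Concretely, $\Oc_v \subseteq \Oc_{v_0}$, and hence $U_v^1 \subseteq U_{v_0}^1$. Dualizing the induced surjection $K^\times/U_v^1 \twoheadrightarrow K^\times/U_{v_0}^1$ via $\Hom(-,\Lambda_n)$ then yields $\D = \D_{v_0}^n \subseteq \D_v^n$.

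For the reverse inclusion $\D_v^n \subseteq \D$, I would apply Lemma \ref{lemma: supremum} twice. Applied to the pairwise comparable collection $(v_\sigma)_{\sigma \in \I}$ (whose pairwise comparability was already verified via Lemma \ref{lemma: C-pair implies comparable}), the lemma gives
$$\D_v^n = \bigcap_{\sigma \in \I} \D_{v_\sigma}^n.$$
Since $\Sigma \subseteq \I$, this intersection is contained in $\bigcap_{\sigma \in \Sigma} \D_{v_\sigma}^n$, and a second application of Lemma \ref{lemma: supremum}, this time to the sub-collection $(v_\sigma)_{\sigma \in \Sigma}$ with supremum $v_0$, identifies the latter with $\D_{v_0}^n = \D$. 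Combining the two inclusions yields the claim.

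I do not anticipate any real obstacle here: all of the heavy lifting — the valuativity of $\I$, the pairwise comparability of the $v_\sigma$'s for $\sigma \in \I$, and the crucial containment $\Sigma \subseteq \I$ — has already been carried out in the paragraphs preceding the claim, so what remains is a short diagram-chase. As a by-product, the argument implicitly shows that $v$ and $v_0$ in fact have the same minimized decomposition group (though not necessarily the same inertia group), which is all that is needed at this point; proving the companion equality $\I^N_n(\D) = \I_v^n$ will presumably require a separate argument in the next step of the proof.
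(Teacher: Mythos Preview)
Your argument contains a genuine error that makes both halves prove the \emph{same} (easy) inclusion, leaving the substantive direction unproved.

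In your ``forward inclusion'' you write that $\Oc_v \subseteq \Oc_{v_0}$ implies $U_v^1 \subseteq U_{v_0}^1$. This is backwards: for valuation rings, $\Oc_v \subseteq \Oc_{v_0}$ forces $\mf_{v_0} \subseteq \mf_v$, hence $U_{v_0}^1 = 1+\mf_{v_0} \subseteq 1+\mf_v = U_v^1$. Dualizing therefore gives $\D_v^n \subseteq \D_{v_0}^n = \D$, which is exactly what your ``reverse inclusion'' also proves via Lemma~\ref{lemma: supremum}. So both paragraphs establish $\D_v^n \subseteq \D$, and neither touches $\D \subseteq \D_v^n$.

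The missing direction is not a diagram-chase. One must show that an arbitrary $\tau \in \D = \D_{v_0}^n$ lies in $\D_{v_\sigma}^n$ for \emph{every} $\sigma \in \I$, not merely for $\sigma \in \Sigma$; only then does Lemma~\ref{lemma: supremum} give $\tau \in \D_v^n$. There is no formal reason for this: $v$ is finer than $v_0$, so a priori $\D_v^n$ could be strictly smaller. The paper's proof uses the defining property of $\I = \I^N_n(\D)$: for $\sigma \in \I$ there is an $N$-lift $\sigma'$ making a C-pair with $N$-lifts of every element of $\D$ (in particular with lifts of $\tau$ and of the witnesses $\tau_1,\tau_2$). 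Lemma~\ref{lemma: key lemma} then forces $\sigma'_m$ to be valuative, and Lemma~\ref{lemma: C-pair implies decomposition} places $\tau$ in $\D_{v_\sigma}^n$. This is the step you need to supply.
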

\begin{proof}
	By Lemma \ref{lemma: valuative} and the fact that $\Sigma \subset \I$, we see that $v_0 = v_\Sigma$ is a coarsening of $v = v_{\I}$.
	Thus, it suffices to prove that $\D \subset \D_v^n$ since the inclusion $\D_v^n \subset \D$ is already known.
	As such, suppose that $\tau \in \D$ is given, and let $\tau_1,\tau_2 \in \D$ be two elements such that $(\tau_1,\tau_2)$ is not a C-pair, as above.
	For $\sigma \in \I$, it follows from the definition of $\I$ that there exist $N$-lifts $\sigma',\tau',\tau_1',\tau_2'$ of $\sigma,\tau,\tau_1,\tau_2$ such that the three pairs $(\sigma',\tau')$, $(\sigma',\tau_1')$ and $(\sigma',\tau_2')$ are all C-pairs.
	By Lemma \ref{lemma: key lemma} we see that $\sigma'_m$ is valuative, and thus by Lemma \ref{lemma: C-pair implies decomposition} we deduce that $\tau \in \D_{v_\sigma}^n$.
	Since this is true for all $\sigma \in \I$, it follows from Lemma \ref{lemma: supremum} that $\tau \in \D_v^n$.
	Thus $\D \subset \D_v^n$ as required.
\end{proof}

\begin{claim*}
	One has $\I = \I_v^n$.
\end{claim*}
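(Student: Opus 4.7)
The plan is to establish the two inclusions $\I \subset \I_v^n$ and $\I_v^n \subset \I$ separately. Both will follow essentially from the lemmas already in place, together with the definition of $v = v_{\I}$ as the supremum of the valuations $(v_\sigma)_{\sigma \in \I}$.

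For the inclusion $\I \subset \I_v^n$, I would simply invoke the setup already in place: $\I$ has been shown to be a valuative subset of $\gfrak^n(K)$ and $v = v_{\I}$ is its associated valuation in the sense of Lemma \ref{lemma: valuative}. Therefore $\I \subset \I_{v_{\I}}^n = \I_v^n$ directly from the definition of $v_{\I}$.

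For the reverse inclusion $\I_v^n \subset \I$, I would verify directly that each $\sigma \in \I_v^n$ satisfies the membership condition for $\I^N_n(\D)$. First, by Lemma \ref{lemma: m-lifts}(3) applied to $v$, choose an $N$-lift $\sigma' \in \I_v^N$ of $\sigma$. Then, given an arbitrary $\tau \in \D = \D_v^n$, Lemma \ref{lemma: m-lifts}(3) yields an $N$-lift $\tau' \in \D_v^N$ of $\tau$. Since $\mu_{2\ell^N} \subset K$, one has $\sigma'(-1) = \tau'(-1) = 0$, so the hypotheses of Lemma \ref{lemma: residue C-pair}(3) are satisfied. Because $\sigma' \in \I_v^N$ means $\sigma'_v = 0$, the residue pair $(\sigma'_v,\tau'_v)$ in $\gfrak^N(Kv)$ is trivially a C-pair, and Lemma \ref{lemma: residue C-pair}(3) then promotes this to $(\sigma',\tau')$ being a C-pair in $\gfrak^N(K)$. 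This witnesses $\sigma \in \I^N_n(\D) = \I$, completing the reverse inclusion.

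There is no substantial obstacle here, since the hard work was already carried out in the preceding claim (establishing $\D = \D_v^n$) and in the construction of $v$ as the supremum of the $v_\sigma$ for $\sigma \in \I$. The only subtlety is making sure that the residue-pair reduction via Lemma \ref{lemma: residue C-pair}(3) is legitimate, which is exactly what the standing hypothesis $\mu_{2\ell^N} \subset K$ guarantees through the vanishing $\sigma'(-1) = \tau'(-1) = 0$. Once both inclusions are in hand, the claim $\I = \I_v^n$ is proved, and combined with the previous claim $\D = \D_v^n$ one then finishes part (1) of Theorem \ref{maintheorem: definability of visible decomposition groups} by verifying $n$-visibility of $v$ via Lemmas \ref{lemma: n equals 1} and \ref{lemma: visible reduction to n equals 1}, exactly as in the proofs of Theorems \ref{maintheorem: definability of visible inertia set} and \ref{maintheorem: definability of visible inertia groups}.
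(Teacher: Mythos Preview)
Your proof of the claim is correct and follows essentially the same line as the paper: the inclusion $\I \subset \I_v^n$ comes from $v = v_{\I}$, and the reverse inclusion is obtained by lifting $\sigma \in \I_v^n$ to $\I_v^N$ and each $\tau \in \D = \D_v^n$ to $\D_v^N$ via Lemma~\ref{lemma: m-lifts}(3), then applying Lemma~\ref{lemma: residue C-pair}(3). Your added justification that $\sigma'_v = 0$ makes the residue pair trivially a C-pair is exactly what underlies the paper's invocation of Lemma~\ref{lemma: residue C-pair}(3).

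One minor remark on your closing aside (which lies outside the claim itself): the paper does \emph{not} finish the visibility of $v$ by invoking Lemmas~\ref{lemma: n equals 1} and~\ref{lemma: visible reduction to n equals 1} as in Theorems~\ref{maintheorem: definability of visible inertia set} and~\ref{maintheorem: definability of visible inertia groups}. Those lemmas only produce an $n$-visible \emph{refinement} of a given valuation, whereas here one needs $v$ itself to be $n$-visible. The paper instead verifies conditions (V1)--(V3) for $v$ directly, using in particular the defining property of $\I = \I^N_n(\D)$ to handle (V3).
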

\begin{proof}
	We already know that $\I \subset \I_v^n$ by the definition of $v = v_{\I}$.
	Suppose that $\sigma \in \I_v^n$ is given.
	By Lemma \ref{lemma: m-lifts}(3), there exists an $N$-lift $\sigma'$ of $\sigma$ such that $\sigma' \in \I_v^N$.
	On the other hand, by Lemma \ref{lemma: m-lifts}(3), we know that for all $\tau \in \D = \D_v^n$, there exists an $N$-lift $\tau'$ of $\tau$ such that $\tau' \in \D_v^N$.
	With this notation, it follows from Lemma \ref{lemma: residue C-pair}(3) that $(\sigma',\tau')$ is a C-pair.
	Thus $\sigma \in \I$ by the definition of $\I$.
\end{proof}

To conclude the proof of (1), we must prove that $v$ is an $n$-visible valuation.
First, since $v = v_{\I}$, it follows from Lemma \ref{lemma: valuative} that condition (V1) holds true.
Also, as noted above, there exist $\tau_1,\tau_2 \in \D_v^n$ such that $(\tau_1,\tau_2)$ is not a C-pair.
Thus condition (V2) holds true by Lemma \ref{lemma: residue C-pair}(3).
Finally, suppose that $w_0$ is a valuation of $Kv$ such that $\D_w = \gfrak^n(Kv)$, and put $w = w_0 \circ v$.
Then by Lemma \ref{lemma: residue C-pair}(2), we see that $\D_w^n = \D_v^n$, while $\I_v^n \subset \I_w^n$.

For every $\sigma \in \I_w^n$, by Lemma \ref{lemma: m-lifts}(3), there exists some $N$-lift $\sigma'$ of $\sigma$ such that $\sigma' \in \I_w^N$.
By Lemma \ref{lemma: m-lifts}(3), for every $\tau \in \D_w^n = \D$, there exists an $N$-lift $\tau'$ of $\tau$ such that $\tau' \in \D_w^N$.
But then by Lemma \ref{lemma: residue C-pair}(3) it follows that $(\sigma',\tau')$ is a C-pair.
Therefore $\I_w^n \subset \I$ by the definition of $\I$, while $\I = \I_v^n \subset \I_w^n$.
In particular $\I_w^n = \I_v^n$ and therefore $\I_{w_0}^n = 0$ by Lemma \ref{lemma: residue C-pair}(2).
This shows that $v$ satisfies condition (V3).

\vskip 10pt
\noindent\emph{Proof of (2).}
Let $v$ be an $n$-visible valuation, and let $\Sigma \subset \I_v^n$ a subset such that $v(\Sigma^\perp)$ contains no non-trivial convex subgroups.
We must show that $\D_v^n = \D^N_n(\Sigma)$ and that $\I_v^n = \I^N_v(\D^N_n(\Sigma))$.

Following exactly the argument of (1) above with the given $\Sigma$, we see that $\D := \D^N_n(\Sigma) = \D_{v_0}^n$ where $v_0 = v_\Sigma$.
But by Lemma \ref{lemma: valuative}, we see that $v = v_\Sigma$, so that $v_0 = v$.

Hence, it suffices to prove that $\I := \I^N_n(\D) = \I_v^n$.
By Lemma \ref{lemma: m-lifts}(3), for every $\sigma \in \I_v^n$, there exists an $N$-lift $\sigma'$ of $\sigma$ such that $\sigma' \in \I_v^N$.
By Lemma \ref{lemma: m-lifts}(3) again, for every $\tau \in \D = \D_v^n$, there exists an $N$-lift $\tau'$ of $\tau$ such that $\tau' \in \D_v^N$.
By Lemma \ref{lemma: residue C-pair}(3) we see that $(\sigma',\tau')$ is a C-pair.
Therefore, one has $\I_v^n \subset \I$ by the definition of $\I$.

Finally, by assertion (1) above, we see that $\I$ is valuative, and letting $w := v_{\I}$, one has $\D_w^n = \D$ and $\I_w^n = \I$.
Let $w_0 = w/v$ denote the valuation of $Kv$ induced by $w$.
Then by Lemma \ref{lemma: residue C-pair}(2), we see that one has $\D_{w_0}^n = \gfrak^n(Kv)$.
Since $v$ is $n$-visible, we deduce that $\I_{w_0}^n = 1$ and thus $\I_w^n = \I_v^n$ by Lemma \ref{lemma: residue C-pair}(2).
Finally, Lemma \ref{lemma: valuative} implies that $v = w$.
Therefore $\I = \I_v^n$ and $\D = \D_v^n$.
This concludes the proof of Theorem \ref{maintheorem: definability of visible decomposition groups}.

\section{Quasi-Divisorial Valuations}
\label{section: quasi-divisorial valuations}

In this section we recall the necessary facts concerning quasi-divisorial valuations of function fields.
This terminology was introduced by {\sc Pop} \cite{Pop2010}, and we will refer to Remark/Definition 4.1 of loc.cit. and \cite[Facts 5.4, 5.5]{Pop2006} for the various general statements concerning (almost $r$-)quasi-divisorial valuations.

Throughout this section, $K$ will be a function field over an algebraically closed field $k$.
Let $v$ be a valuation of $K$.
Recall that Abhyankar's inequality states that 
\[ \rr(vK/vk) + \trdeg(Kv|kv) \leq \trdeg(K|k) \]
where $\rr(vK/vk) := \dim_\Q((vK/vk)\otimes_\Z \Q)$ denotes the {\bf rational rank} of $vK/vk$.
We say that $v$ {\bf has no transcendence defect} if the above inequality is an equality.
The following fact is more-or-less well known; see \cite[Facts 5.4]{Pop2006} for a precise reference.

\begin{fact}
\label{fact: defectless coarsening}
	In the context above, suppose that $w$ is a valuation of $K$ which has no transcendence defect, and let $v$ be a coarsening of $w$.
	Then $v$ has no transcendence defect.
\end{fact}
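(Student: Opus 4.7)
The plan is to write $w = w_1 \circ v$, where $w_1$ is the valuation of the residue field $Kv$ induced by $w$, and then track what happens to both pieces of Abhyankar's inequality under this decomposition. First I would set up the short exact sequences of value groups
\[ 0 \to w_1(Kv) \to wK \to vK \to 0, \qquad 0 \to w_1(kv) \to wk \to vk \to 0, \]
and note that, because each of these consists of torsion-free abelian groups, rational rank is additive on the quotients. Taking the quotient sequence and applying $\rr$ gives
\[ \rr(wK/wk) = \rr(vK/vk) + \rr\bigl(w_1(Kv)/w_1(kv)\bigr). \]
For the residue-field side, there is a canonical identification $Kw = (Kv)w_1$ and $kw = (kv)w_1$, so $\trdeg(Kw|kw) = \trdeg((Kv)w_1|(kv)w_1)$.

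Next I would combine these identifications with the hypothesis that $w$ has no transcendence defect, and then apply Abhyankar's inequality to the valuation $w_1$ of the function field $Kv|kv$. Explicitly:
\[ \trdeg(K|k) = \rr(wK/wk) + \trdeg(Kw|kw) \]
\[ = \rr(vK/vk) + \Bigl[\rr\bigl(w_1(Kv)/w_1(kv)\bigr) + \trdeg((Kv)w_1|(kv)w_1)\Bigr] \]
\[ \leq \rr(vK/vk) + \trdeg(Kv|kv), \]
where the final step uses Abhyankar's inequality for $w_1$ on $Kv|kv$. The reverse inequality is Abhyankar's inequality for $v$ on $K|k$, so both must be equalities, and $v$ has no transcendence defect.

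The only nonroutine step is the additivity of rational rank in the quotient sequence, and that in turn is immediate once one knows each group in the two original short exact sequences is torsion-free abelian and that tensoring with $\Q$ preserves exactness; there is no obstacle here since both $wK$ and $vK$ (and their $k$-restrictions) are ordered abelian groups. Thus the whole argument is essentially bookkeeping around Abhyankar's inequality, and I do not anticipate any real obstacle; the main thing to be careful about is writing down the correct quotient sequence $0 \to w_1(Kv)/w_1(kv) \to wK/wk \to vK/vk \to 0$ and justifying exactness, which follows from the snake lemma applied to the two value-group sequences above.
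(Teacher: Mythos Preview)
Your argument is correct. The paper does not give its own proof of this fact; it simply cites \cite[Facts 5.4]{Pop2006} as a reference, so there is no ``paper's approach'' to compare against. What you have written is exactly the standard proof.

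One small remark on wording: you refer to $Kv|kv$ as a ``function field'' when applying Abhyankar's inequality to $w_1$, but at this point in the argument you do not yet know that $Kv$ is finitely generated over $kv$ (indeed, that is part of what one eventually concludes from the defectless condition). This is harmless, because the form of Abhyankar's inequality you need---namely $\rr(w_1(Kv)/w_1(kv)) + \trdeg((Kv)w_1 \mid (kv)w_1) \leq \trdeg(Kv|kv)$---holds for an arbitrary field extension, not just for function fields. The usual proof (lifting $\Q$-independent values and algebraically independent residues to algebraically independent elements) requires no finiteness hypothesis. So the argument goes through as written; just avoid calling $Kv|kv$ a function field at that step.
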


\subsection{almost-$r$-quasi-divisorial valuations}

Let $v$ be a valuation of $K$ and let $r$ be such that $1 \leq r \leq \trdeg(K|k) =: d$.
Following \cite{Pop2011a}, we say that $v$ is an {\bf almost-$r$-quasi-divisorial valuation} of $K|k$ if the following conditions hold true:
\begin{enumerate}
	\item $vK$ contains no non-trivial $\ell$-divisible convex subgroups.
	\item $\rr(vK/vk) = r$.
	\item $v$ has no transcendence defect, i.e. $\trdeg(K|k)-r = \trdeg(Kv|kv)$.
\end{enumerate}

Valuations which are almost-$r$-quasi-divisorial always exist, as follows.
Suppose that $X$ is a normal model for $K|k$.
Let $v$ be the discrete rank $r$ valuation of $K$ defined by a flag of Weil-prime-divisors of length $r$ on $X$.
Then it immediately follows from the definition that $v$ is an almost-$r$-quasi-divisorial valuation of $K|k$.
Thus, almost-$r$-quasi-divisorial valuations always exist for all $r$ such that $1 \leq r \leq \trdeg(K|k)$.
In general, however, there are many almost-$r$-quasi-divisorial valuations of $K|k$ which are \emph{non-trivial on $k$}.

The following fact summarizes the other basic required facts concerning almost-$r$-quasi-divisorial valuations.
Again, refer to {\sc Pop} \cite[Remark/Definition 4.1]{Pop2010} and \cite[Facts 5.4, 5.5]{Pop2006} for the proofs of these statements.

\begin{fact}
\label{fact: r-quasi-divisorial facts}
	In the context above, suppose that $v$ is an almost-$r$-quasi-divisorial valuation of $K|k$.
	Then the following hold:
	\begin{enumerate}
		\item $vK/vk \cong \Z^r$ as abstract groups.
		\item $Kv$ is a function field of transcendence degree $\trdeg(K|k)-r$ over $kv$.
		\item If $v_0$ is an almost-$r_0$-quasi-divisorial valuation of $Kv|kv$, then $v_0 \circ v$ is an almost-$(r+r_0)$-quasi-divisorial valuation of $K|k$.
	\end{enumerate}
\end{fact}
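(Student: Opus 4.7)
The plan is to deduce all three assertions from the classical equality case of Abhyankar's inequality (sometimes called the Abhyankar--Zariski theorem), together with a short analysis of how convex subgroups behave under valuation composition. Recall the input: for a finitely generated field extension $F|F_0$ with a valuation $w$ having no transcendence defect, the group $wF/wF_0$ is a finitely generated abelian group whose rank equals $\rr(wF/wF_0)$, and $Fw|F_0w$ is a finitely generated field extension whose transcendence degree attains the Abhyankar bound.

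For (1) and (2), condition (3) in the definition of almost-$r$-quasi-divisoriality is exactly the equality case of Abhyankar's inequality applied to $v$ on $K|k$. The classical theorem immediately yields (2), and it also gives that $vK/vk$ is a finitely generated abelian group of rational rank $r$; hence $vK/vk \cong \Z^r \oplus T$ for some finite torsion group $T$. To finish (1) I would argue that $T = 0$ using that $k$ is algebraically closed, so $vk$ is a divisible ordered abelian group: if $g \in vK$ and $p \cdot g \in vk$ for some prime $p$, divisibility of $vk$ supplies $h \in vk$ with $p \cdot h = p \cdot g$, and since $vK$ is totally ordered and hence torsion-free, this forces $g = h \in vk$. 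Therefore $vK/vk$ is torsion-free, so $T = 0$.

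For (3), set $w := v_0 \circ v$. I would verify the three defining conditions in turn. First, the canonical short exact sequences
\[ 0 \to v_0(Kv) \to wK \to vK \to 0, \quad 0 \to v_0(kv) \to wk \to vk \to 0 \]
give, after tensoring with $\Q$, the identity $\rr(wK/wk) = \rr(vK/vk) + \rr(v_0(Kv)/v_0(kv)) = r + r_0$. Second, using the identifications $Kw = (Kv)v_0$ and $kw = (kv)v_0$ together with the no-transcendence-defect hypotheses on $v$ and on $v_0$, one chains
\[ \trdeg(K|k) = \trdeg(Kv|kv) + r = \trdeg(Kw|kw) + r_0 + r, \]
which combined with the rational rank computation above is precisely the equality case of Abhyankar's inequality for $w$.

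The most delicate step, and the one I expect to require the most care, is the $\ell$-divisible convex subgroup condition for $w$. The key observation is that the convex subgroups of the totally ordered abelian group $wK$ form a chain under inclusion, and the canonical image of $v_0(Kv)$ in $wK$ is one of these convex subgroups. Suppose $H \subseteq wK$ is a non-trivial $\ell$-divisible convex subgroup. If $H \subseteq v_0(Kv)$, then $H$ is an $\ell$-divisible convex subgroup of $v_0(Kv)$, contradicting condition (V1) for $v_0$. Otherwise $v_0(Kv) \subseteq H$, and then $H/v_0(Kv)$ is a convex subgroup of $vK$ which is $\ell$-divisible as the quotient of the $\ell$-divisible group $H$; by condition (V1) for $v$ this quotient is trivial, so $H = v_0(Kv)$. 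But then $v_0(Kv)$ itself is $\ell$-divisible, and since it is tautologically a convex subgroup of itself and $r_0 \geq 1$ makes it non-trivial, this again contradicts (V1) for $v_0$. Hence no such $H$ exists, and (3) is established.
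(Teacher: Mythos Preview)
Your proof is correct. The paper itself does not give a proof of this fact; it simply cites {\sc Pop} \cite[Remark/Definition 4.1]{Pop2010} and \cite[Facts 5.4, 5.5]{Pop2006} and treats the statement as background. So there is no in-paper argument to compare against.

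That said, your approach is exactly the standard one underlying those references: invoke the equality case of Abhyankar's inequality for (1) and (2), then verify the three defining conditions for the composite $w = v_0 \circ v$ in (3). Your handling of the $\ell$-divisible convex subgroup condition for $wK$ is in fact the same dichotomy argument the paper sketches (in one sentence) in the proof of Lemma \ref{lemma: visible reduction to n equals 1}(1), where the short exact sequence $0 \to w_0(Kv) \to wK \to vK \to 0$ is used in the same way. One small point worth tightening in your exposition of the rational-rank step: rather than tensoring the two displayed sequences with $\Q$ separately, it is cleaner to first pass to the quotient sequence $0 \to v_0(Kv)/v_0(kv) \to wK/wk \to vK/vk \to 0$ (which follows from the snake lemma or a direct check that $(wk + v_0(Kv))/wk \cong v_0(Kv)/v_0(kv)$) and then tensor with $\Q$; this makes the additivity of rational rank immediate.
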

By Fact \ref{fact: r-quasi-divisorial facts}(1), we see that the definition of a {\bf quasi-divisorial valuation} from \S\ref{subsection: intro / quasi-divisorial valuations} agrees with the definition of an almost-$1$-quasi-divisorial valuation.
Moreover, the following lemma shows that almost-$r$-quasi-divisorial valuations are $m$-visible for all $m$, provided that $r$ is smaller than $\trdeg(K|k)$.

\begin{lemma}
\label{lemma: quasi-divisors are visible}
	Let $m \in \Nbar$ be given.
	Let $K$ be a function field over an algebraically closed field $k$ and let $r$ be such that $1 \leq r < \trdeg(K|k)$.
	Let $v$ be an almost-$r$-quasi-divisorial valuation of $K$.
	Then $v$ is $m$-visible.
\end{lemma}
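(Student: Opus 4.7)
My plan is to verify the three conditions (V1), (V2), (V3) of $m$-visibility in turn. Condition (V1) is immediate from the definition of almost-$r$-quasi-divisorial. For (V2) and (V3) the essential input is Fact \ref{fact: r-quasi-divisorial facts}(2), which tells us that $Kv$ is a function field of transcendence degree $d-r \geq 1$ over the algebraically closed field $kv$. I will use repeatedly the structural fact that for such a function field $L|F$ with $F$ algebraically closed, $\gfrak^1(L) = \Hom(L^\times, \Lambda_1)$ has infinite rank: choosing $t \in L$ transcendental over $F$, the classes $\{[t - \alpha] : \alpha \in F\}$ form a basis of $F(t)^\times/F(t)^{\times\ell}$, and the kernel of the natural map $F(t)^\times/F(t)^{\times\ell} \to L^\times/L^{\times\ell}$ is contained in the finite-dimensional subspace corresponding to the degree-$\ell$ Kummer extensions of $F(t)$ that embed into $L$ (a finite set, since the relatively algebraic closure of $F(t)$ in $L$ is a finite extension of $F(t)$).

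For (V2), the canonical surjection $\gfrak^m(Kv) \twoheadrightarrow \gfrak^1(Kv)$ carries C-sets to C-sets, so it suffices to show that $\gfrak^1(Kv)$ is not a C-set. Suppose otherwise. Then Lemma \ref{lemma: n equals 1}(2) produces a valuation $w$ of $Kv$ with $\D_w^1 = \gfrak^1(Kv)$ and $\gfrak^1((Kv)w)$ cyclic; by Lemma \ref{lemma: residue C-pair}(1) this cyclic quotient is $\gfrak^1(Kv)/\I_w^1$. But $\I_w^1 \cong \Hom(wKv/\ell\cdot wKv, \Lambda_1)$, and since $kv$ is algebraically closed the group $w(kv^\times)$ is $\ell$-divisible, so Abhyankar's inequality yields $\rl \I_w^1 \leq \rr(wKv/wkv) \leq d-r$, which is finite. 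Hence the quotient of the infinite-rank $\gfrak^1(Kv)$ by $\I_w^1$ remains of infinite rank, contradicting cyclicity.

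For (V3), let $w$ be a valuation of $Kv$ with $\D_w^m = \gfrak^m(Kv)$. By Lemma \ref{lemma: m-lifts}(4) and (5) it suffices to show that $\D_w^1 = \gfrak^1(Kv)$, equivalently $U_w^1 \subset (Kv)^{\times\ell}$, forces $w$ to be trivial. I pick two distinct Weil prime divisors on a normal model of $Kv|kv$, producing two independent rank-one divisorial valuations $w_1, w_2$. If $w$ and $w_i$ were independent, the approximation theorem would furnish $x \in U_w^1$ with $w_i(x) = 1$; but $x \in (Kv)^{\times\ell}$ would then force $w_i(x) \in \ell\Z$, a contradiction. Hence $w$ and $w_i$ are dependent, and since $w_i$ has rank one, its only non-trivial coarsening is itself, so $w_i$ must be a coarsening of $w$. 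But then both $w_1$ and $w_2$ are coarsenings of $w$, which contradicts the fact that coarsenings of any single valuation are totally ordered by inclusion of valuation rings, together with the independence (and hence incomparability) of $w_1,w_2$.

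The main obstacle is establishing the infinite-rank statement for $\gfrak^1(Kv)$; once that is in hand, the verifications of (V2) and (V3) become standard combinations of Abhyankar's inequality, approximation for independent valuations, and bookkeeping with the lemmas of Sections \ref{section: Minimized Decomposition Theory} and \ref{section: valuative subsets}.
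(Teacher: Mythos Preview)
Your argument is correct in outline and yields a valid proof, but it takes a genuinely different route from the paper's. Let me compare the two, and then flag one small issue.

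\textbf{Comparison with the paper.} The paper's proof is not self-contained: after reducing to $m=1$ via Lemma~\ref{lemma: visible reduction to n equals 1}(2), it quotes \cite[Example 4.3 and Lemma 4.8]{Topaz2012c} as black boxes to obtain the structural equality $\I_v^1 = \I^1_1(\D_v^1) \neq \D_v^1$. Condition (V2) is then read off from this via Lemma~\ref{lemma: residue C-pair}(3), and (V3) follows because for any refinement $w = w_0 \circ v$ with $\D_w^1 = \D_v^1$ one again has $\I_w^1 \subset \I^1_1(\D_v^1) = \I_v^1$, forcing $\I_{w_0}^1 = 1$. Your approach instead replaces the external input by the elementary fact that $\gfrak^1(Kv)$ has infinite rank, and then argues (V2) via Lemma~\ref{lemma: n equals 1}(2) together with the Abhyankar bound on $\rl(\I_w^1)$, and (V3) via the approximation theorem and the total ordering of coarsenings. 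Your (V3) in fact proves the stronger statement that any $w$ on $Kv$ with $\D_w^1 = \gfrak^1(Kv)$ must be trivial, not merely that $\I_w^1 = 1$. What you gain is a self-contained argument that avoids the machinery of \cite{Topaz2012c}; what the paper's route gains is the identity $\I_v^1 = \I^1_1(\D_v^1)$ as a byproduct.

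\textbf{One small gap.} Your infinite-rank argument for $\gfrak^1(Kv)$ invokes ``degree-$\ell$ Kummer extensions of $F(t)$ that embed into $L$'' to bound the kernel of $F(t)^\times/\ell \to L^\times/\ell$. This is fine in characteristic $\neq \ell$, but in characteristic $\ell$ it can fail as stated: if $t^{1/\ell} \in L$ then \emph{every} $x \in F(t)^\times$ becomes an $\ell$-th power in $L$, so the kernel is all of $F(t)^\times/\ell$. The fix is immediate: since $F = kv$ is perfect, choose $t$ so that $L/F(t)$ is separable (e.g.\ take $t$ from a separating transcendence basis); then $L \cap F(t^{1/\ell}) = F(t)$ and the kernel is in fact trivial. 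With this adjustment your proof goes through in all characteristics.
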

\begin{proof}
	By Lemma \ref{lemma: visible reduction to n equals 1}(2), it suffices to prove that $v$ is $1$-visible.
	Condition (V1) is part of the definition of an almost-$r$-quasi-divisorial valuation.
	Furthermore, by Fact \ref{fact: r-quasi-divisorial facts}(2), the residue field $Kv$ is a function field of transcendence degree $\geq 1$ over $kv$.
	
	By \cite[Example 4.3]{Topaz2012c}, it follows (using the notation of loc.cit.) that $v \in \Vc_{K,1}$.
	Next, by \cite[Lemma 4.8]{Topaz2012c}, it follows (using our notation) that $\I_v^1 = \I^1_1(\D_v^1) \neq \D_v^1$.
	Thus, by Lemma \ref{lemma: residue C-pair}(3) and the definition of $\I^1_1$, we deduce that $\gfrak^1(Kv)$ is not a C-set.

	Finally, suppose that $w_0$ is a valuation of $Kv$ such that $\D_{w_0}^1 = \gfrak^1(Kv)$, and put $w := w_0 \circ v$.
	Then by Lemma \ref{lemma: residue C-pair}(2) we deduce that $\D_w^1 = \D_v^1$. 
	Thus, by Lemma \ref{lemma: residue C-pair}(3) and the definition of $\I^1_1$, it follows that $\I_w^1 \subset \I^1_1(\D_v^1) = \I_v^1$.
	Since $v$ is a coarsening of $w$, we deduce that $\I_v^1 = \I_w^1$, hence $\I_{w_0}^1 = 1$ by Lemma \ref{lemma: residue C-pair}(2).
\end{proof}

\subsection{$\ell$-rank}
We will need to use a concept which is similar to the rational rank of $vK/vk$.
For an arbitrary valuation $v$ of $K$, we define
\[ \rl(vK) = \dim_{\Z/\ell}(vK \otimes_\Z \Z/\ell) \]
and call $\rl(vK)$ the {\bf $\ell$-rank of $vK$.}
By Pontryagin duality, we immediately see that $\rl(vK)$ is the same as the rank of $\I_v^n$ as a pro-$\ell$ group.
Moreover, since $k$ is algebraically closed, hence $vk$ is divisible and $vK/vk$ is torsion-free, it follows that 
\[ \rl(vK) \leq \rr(vK/vk). \]
In particular, we obtain an inequality involving $\ell$-rank which is analogous to Abhyankar's inequality:
\[ \rl(vK) + \trdeg(Kv|kv) \leq \trdeg(K|k). \]
Furthermore, if this inequality is an \emph{equality}, then $v$ has no transcendence defect.

\subsection{Reconstructing inertia/decomposition}
We now prove our main theorem concerning the minimized inertia/decomposition groups of quasi-divisorial valuations of $K|k$.
First, we show how to recover the transcendence degree.
The argument for Theorem \ref{theorem: transcendence degree} is similar to \cite{Pop2010} resp. \cite{Pop2011a} where a similar statement is proven for $n=\infty$ resp. $n = 1$.
\begin{theorem}
\label{theorem: transcendence degree}
	Let $n \in \Nbar$ be given.
	Let $K$ be a function field over an algebraically closed field $k$.
	Then $d := \trdeg(K|k)$ is maximal among the non-negative integers $r$ such that the following holds true:
	There exist $\sigma_1,\ldots,\sigma_r \in \gfrak^n(K)$ such that $\sigma_1,\ldots,\sigma_r$ are $\Lambda_n$-independent and $\{\sigma_1,\ldots,\sigma_r\}$ is a C-set.
\end{theorem}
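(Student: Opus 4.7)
The plan is to establish the lower bound $r \geq d$ and the upper bound $r \leq d$ separately.

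\emph{Lower bound.} Exhibit a C-set of rank $d$. The candidate is the minimized inertia group $\I_v^n$ of an almost-$d$-quasi-divisorial valuation $v$ of $K|k$; such a valuation exists by taking a length-$d$ flag of Weil-prime-divisors on any normal model. On the one hand, $\I_v^n$ is automatically a C-set: every $\sigma \in \I_v^n$ vanishes on $U_v$, hence has zero image in $\gfrak^n(Kv)$, so Lemma \ref{lemma: residue C-pair}(3) (with $\sigma(-1)=0$ ensured by $\mu_{2\ell^n}\subset K$) makes every pair a C-pair. On the other hand, Fact \ref{fact: r-quasi-divisorial facts}(1) gives $vK/vk \cong \Z^d$, and $\ell$-divisibility of $vk$ (since $k$ is algebraically closed) forces $\rl(vK) = d$; by Pontryagin duality, $\I_v^n = \Hom(vK,\Lambda_n)$ has $\Lambda_n$-rank $d$, so any $d$ $\Lambda_n$-independent elements work.

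\emph{Upper bound.} The projection $\gfrak^n(K) \twoheadrightarrow \gfrak^1(K)$ sends C-pairs to C-pairs and $\Lambda_n$-independent tuples to $\Z/\ell$-independent tuples, so it suffices to treat $n=1$. We then induct on $d$: the base $d=0$ forces $K=k$ and hence $\gfrak^1(K)=0$. For $d\geq 1$, set $L := \langle\sigma_1,\ldots,\sigma_r\rangle_{\Z/\ell}$ and let $L_{\rm val}\subset L$ denote the valuative elements of $L$. For $n=1$ the Main Theorem of C-pairs applies without lifting (since $\Nbf(1)=1$); since $L$ is a C-set, the valuative members of $L$ have pairwise comparable associated valuations by Lemma \ref{lemma: C-pair implies comparable}, so $L_{\rm val}$ is valuative by Lemma \ref{lemma: supremum}, giving a canonical valuation $v := v_{L_{\rm val}}$. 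Lemma \ref{lemma: C-pair implies decomposition} then shows $L \subset \D_v^1$, while $L_{\rm val}\subset \I_v^1$ by construction.

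Analyzing the image $\bar L\subset \gfrak^1(Kv)$, one sees $\bar L$ is a C-set whose nonzero elements are all non-valuative in $\gfrak^1(Kv)$ (else lifting via $w_0\circ v$ would force the representative back into $L_{\rm val}\subset \I_v^1$). Reapplying the Main Theorem of C-pairs inside $\gfrak^1(Kv)$ bounds $\rank\bar L \leq 1$: any two $\Z/\ell$-independent non-valuative elements forming a C-pair would produce a nontrivial valuative combination. Therefore $r = \rank(L\cap \I_v^1) + \rank\bar L \leq \rl(vK) + \rank\bar L$. Abhyankar's inequality $\rl(vK)+\trdeg(Kv|kv)\leq d$ handles $\rank\bar L = 0$ at once, and when $\rank\bar L = 1$ we have $r\geq 2$, so $\rl(vK)\geq 1$ and $\trdeg(Kv|kv)<d$; the inductive hypothesis applied to the rank-$1$ C-set $\bar L\subset \gfrak^1(Kv)$ yields $\trdeg(Kv|kv)\geq 1$, and combined with $\rl(vK)\geq r-1$ this gives $r\leq d$.

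The main obstacle is the passage from pairwise C-pair statements, which is the content of Theorem \ref{theorem: main theorem of C-pairs}, to a single valuation $v$ controlling all of $L$ simultaneously. This hinges on assembling the pairwise comparability (Lemma \ref{lemma: C-pair implies comparable}) and the supremum construction (Lemma \ref{lemma: supremum}) of Section \ref{section: valuative subsets} to pin down the coarsest common valuation $v_{L_{\rm val}}$, after which the residue set $\bar L$ must be almost trivial.
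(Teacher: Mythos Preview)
Your argument follows the same route as the paper's proof: reduce to $n=1$, split off the valuative part $L_{\rm val}$ of $L=\langle\sigma_1,\ldots,\sigma_r\rangle$, use Lemmas~\ref{lemma: C-pair implies comparable} and~\ref{lemma: supremum} to build $v=v_{L_{\rm val}}$, use the Main Theorem of C-pairs to bound $\dim(L/L_{\rm val})\leq 1$, and finish with Abhyankar's inequality. The lower bound is also identical.

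There is one genuine slip in your write-up. You invoke the inductive hypothesis on $Kv|kv$ to deduce $\trdeg(Kv|kv)\geq 1$, but the theorem is stated for \emph{function fields}, and for an arbitrary valuation $v$ the residue field $Kv$ need not be finitely generated over $kv$; the inductive hypothesis is therefore not available. (Relatedly, your assertion that $\rank\bar L=1$ forces $r\geq 2$ is false: a single non-valuative $\sigma_1$ gives $r=1$, $L_{\rm val}=0$, $v$ trivial, and $\bar L$ of rank~$1$.) The fix is to drop the induction entirely and argue directly, as the paper does: $kv$ is algebraically closed, so $\gfrak^1(Kv)\neq 0$ already forces $Kv\neq kv$ and hence $\trdeg(Kv|kv)\geq 1$. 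Combined with $\rl(vK)\geq r-1$ and the Abhyankar-type inequality $\rl(vK)+\trdeg(Kv|kv)\leq d$, this yields $r\leq d$ without any appeal to the structure of $Kv|kv$ beyond $kv$ being algebraically closed. Once you make this replacement, your proof and the paper's are the same.
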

\begin{proof}
	First suppose that $r = d$.
	Let $v$ be any almost-$d$-quasi-prime-divisor of $K|k$; e.g. $v$ can be taken to be the valuation associated to a flag of Weil-prime-divisors of maximal length on some normal model of $K|k$.
	Then $vK/vk \cong \Z^d$ by Fact \ref{fact: r-quasi-divisorial facts}, hence $\I_v^n \cong \Lambda_n^d$.
	Let $\sigma_1,\ldots,\sigma_d$ be the (independent) generators of $\I_v^n$.
	By Lemma \ref{lemma: residue C-pair}(3), we see that $\{\sigma_1,\ldots,\sigma_d\}$ is a C-set.

	Now suppose that $\sigma_1,\ldots,\sigma_r$ are $\Lambda_n$-independent, and that $\{\sigma_1,\ldots,\sigma_r\}$ is a C-set.
	We must show that $r \leq d$.
	Note that $(\sigma_1)_1,\ldots,(\sigma_r)_1$ are $\Lambda_1$-independent in $\gfrak^1(K)$, since $\gfrak^n(K)/\ell = \gfrak^1(K)$ (arguing similarly to Lemma \ref{lemma: m-lifts}(1)).
	Also, note that $\{(\sigma_1)_1,\ldots,(\sigma_r)_1\}$ is a C-set in $\gfrak^1(K)$.
	Therefore, it suffices to assume that $n = 1$, so that $\sigma_i = (\sigma_i)_1$ for $i = 1,\ldots,r$.

	Let $\Sigma$ denote the set of valuative elements of $\langle \sigma_1,\ldots,\sigma_r\rangle_{\Lambda_1} =: \Delta$.
	By Lemma \ref{lemma: C-pair implies comparable}, we see that the valuations $(v_\sigma)_{\sigma \in \Sigma}$ are pair-wise comparable.
	Hence $\Sigma$ itself is valuative by Lemma \ref{lemma: supremum}, and therefore $\Sigma = \langle \Sigma \rangle_{\Lambda_1}$.
	By Theorem \ref{theorem: main theorem of C-pairs}, it follows that $\Delta/\Sigma$ is cyclic.
	Moreover, if $\tau \in \Delta$ and $\sigma \in \Sigma$ then by Lemma \ref{lemma: C-pair implies decomposition} we know that $\tau \in \D_{v_\sigma}^1$.
	Hence $\Delta \subset \D_{v_\Sigma}^1$ by Lemma \ref{lemma: supremum}.

	Put $v := v_\Sigma$.
	If $\Sigma = \Delta$, then $r \leq \rl(vK) \leq \rr(vK/vk) \leq d$ since $\Sigma = \Delta \subset \I_v^1$.
	On the other hand, suppose that $\Sigma \neq \Delta$, and thus $\gfrak^1(Kv) \neq 1$ by Lemma \ref{lemma: residue C-pair}(1).
	This implies that $\trdeg(Kv|kv)\geq 1$ since $kv$ is algebraically closed.
	But on the other hand, we know that $\rl(vK) \geq r-1$ since $\Sigma \subset \I_v^1$ and $\Sigma$ has rank $\geq r-1$.
	Thus, we have 
	\[ r \leq \rl(vK) + \trdeg(Kv|kv) \leq d, \]
	as required.
\end{proof}

Before we prove Theorem \ref{maintheorem: quasi-prime-divisors}, we will prove the following useful lemma.
\begin{lemma}
\label{lemma: d-1-prime-divisors}
	Let $n \in \Nbar$ and $N \geq \Rbf(n)$ be given.
	Let $K$ be a function field over an algebraically closed field $k$ such that $d := \trdeg(K|k) \geq 2$.
	Suppose that $\sigma_1,\ldots,\sigma_{d-1} \in \gfrak^n(K)$ are given such that $\Sigma := \{\sigma_1,\ldots,\sigma_{d-1}\}$ satisfies the equivalent conditions of Theorem \ref{maintheorem: definability of visible inertia groups}, and such that $\langle \Sigma \rangle_{\Lambda_n}$ has rank $d-1$.
	Then there exists an almost-$(d-1)$-quasi-divisorial valuation $v$ of $K|k$ such that $\D^N_n(\Sigma) = \D_v^n$ and $\I^N_n(\D^N_n(\Sigma)) = \I_v^n$.
\end{lemma}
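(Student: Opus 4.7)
The plan is to invoke Theorem \ref{maintheorem: definability of visible decomposition groups}(1) directly to produce the valuation, and then argue via Abhyankar's inequality that the resulting $n$-visible valuation is automatically almost-$(d-1)$-quasi-divisorial.

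First, since $\Sigma = \{\sigma_1,\ldots,\sigma_{d-1}\}$ satisfies the equivalent conditions of Theorem \ref{maintheorem: definability of visible inertia groups}, Theorem \ref{maintheorem: definability of visible decomposition groups}(1) immediately hands us an $n$-visible valuation $v$ of $K$ with $\D^N_n(\Sigma) = \D_v^n$ and $\I^N_n(\D^N_n(\Sigma)) = \I_v^n$, and also $\Sigma \subset \I_v^n$. In particular, condition (V1) of $n$-visibility says that $vK$ contains no non-trivial $\ell$-divisible convex subgroups, which is the first defining condition of an almost-$(d-1)$-quasi-divisorial valuation.

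Next, I would observe that $\trdeg(Kv|kv) \geq 1$. Indeed, if $\trdeg(Kv|kv) = 0$, then $Kv$ is algebraic over the algebraically closed field $kv$, hence $Kv = kv$. But $kv^\times$ is divisible, so $\gfrak^n(Kv) = \Hom(kv^\times,\Lambda_n) = 0$, which is trivially a C-set, contradicting condition (V2) of $n$-visibility.

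Finally, I would pin down $\rr(vK/vk)$ and the absence of transcendence defect by a rank count. The hypothesis $\rank_{\Lambda_n}\langle \Sigma \rangle_{\Lambda_n} = d-1$ together with $\Sigma \subset \I_v^n$ and Pontryagin duality gives $\rl(vK) \geq d-1$. Combining this with $\rl(vK) \leq \rr(vK/vk)$ (valid since $vk$ is divisible and $vK/vk$ is torsion-free) and Abhyankar's inequality, one gets the chain
\[ d-1 \leq \rl(vK) \leq \rr(vK/vk) \leq d - \trdeg(Kv|kv) \leq d-1, \]
so every inequality is an equality. In particular $\rr(vK/vk) = d-1$ and $\trdeg(Kv|kv) = d - (d-1) = 1$, so $v$ has no transcendence defect. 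Combined with (V1), this exhibits $v$ as almost-$(d-1)$-quasi-divisorial. The only real content here lies in Theorem \ref{maintheorem: definability of visible decomposition groups}(1); the remainder is a bookkeeping argument with Abhyankar's inequality, so I do not expect any substantial obstacle once the main theorems are in hand.
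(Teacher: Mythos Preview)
Your proof is correct and follows essentially the same approach as the paper: invoke Theorem \ref{maintheorem: definability of visible decomposition groups}(1) to get an $n$-visible valuation $v$ with $\Sigma \subset \I_v^n$, use (V2) to force $\trdeg(Kv|kv) \geq 1$, and then squeeze Abhyankar's inequality against the rank bound $\rl(vK) \geq d-1$ to conclude that $v$ is almost-$(d-1)$-quasi-divisorial. The only cosmetic difference is that the paper writes the inequality chain as $d \leq \rl(vK) + \trdeg(Kv|kv) \leq d$ first and then isolates $\rr(vK/vk) = d-1$, whereas you fold everything into a single chain.
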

\begin{proof}
	By Theorem \ref{maintheorem: definability of visible decomposition groups}, there exists an $n$-visible valuation $v$ of $K$ such that $\D^N_n(\Sigma) = \D_v^n$, and $\Sigma \subset \I^N_n(\D^N_n(\Sigma)) = \I_v^n$.
	In particular, our assumptions ensure that $\rl(vK) \geq d-1$.

	On the other hand, $\gfrak^n(Kv)$ is non-trivial since $v$ is $n$-visible.
	Since $kv$ is algebraically closed, this implies that $\trdeg(Kv|kv) \geq 1$.
	Thus, we have the following inequality:
	\[ d \leq \rl(vK) + \trdeg(Kv|kv) \leq \trdeg(K|k) = d. \]
	In particular, $v$ has no transcendence defect.
	Finally, one has 
	\[ d-1 \leq \rl(vK) \leq \rr(vK/vk) \leq d-1 \]
	hence $d-1 = \rr(vK/vk)$.
	Since $vK$ contains no non-trivial $\ell$-divisible convex subgroups by the fact that $v$ is $n$-visible, we deduce that $v$ is an almost-$(d-1)$-quasi-divisorial valuation of $K|k$, as required.
\end{proof}

\subsection{Proof of Theorem \ref{maintheorem: quasi-prime-divisors}}
We now turn to the proof of Theorem \ref{maintheorem: quasi-prime-divisors}, and we use the notation introduced in the statement of the theorem.

\vskip 10pt
\noindent$(1) \Rightarrow (2)$.
First of all, by condition (a) and Theorem \ref{maintheorem: definability of visible inertia groups}, we know that $\sigma_1 \in \Ivis^n(K)$.
Thus, by Theorem \ref{maintheorem: definability of visible decomposition groups} and condition (c), there exists an $n$-visible valuation $v$ of $K$ such that $I = \I_v^n$ and $D = \D_v^n$.

Moreover, by condition (a) and Theorem \ref{maintheorem: definability of visible decomposition groups}, there exists an $n$-visible valuation $w$ of $K$ such that $\{\sigma_1,\ldots,\sigma_{d-1}\} \subset \I_w^n$.
Since $\I_v^n = \Lambda_n \cdot \sigma_1$ by (c), it follows from Lemma \ref{lemma: valuative} that $v = v_{\sigma_1}$ and therefore $v$ is a coarsening of $w$.
On the other hand, by Lemma \ref{lemma: d-1-prime-divisors}, we know that $w$ is almost-$(d-1)$-quasi-divisorial using condition (b).

Finally, we note that $v$ is a coarsening of $w$ by Lemma \ref{lemma: valuative}, and thus $v$ also has no transcendence defect by Fact \ref{fact: defectless coarsening}.
Condition (V1) says that $vK$ has no non-trivial $\ell$-divisible convex subgroups.
Thus, it follows from Fact \ref{fact: r-quasi-divisorial facts}(1) that $vK/vk \cong \Z^r$ for $r = \rr(vK/vk)$.
But then $r$ is the rank of $\I_v^n$, which is $1$ by assumption.
Hence, $v$ is a quasi-divisorial valuation, as required.

\vskip 10pt
\noindent$(2) \Rightarrow (1)$.
Suppose that $v$ is a quasi-divisorial valuation of $K$.
Thus $Kv$ is a function field of transcendence degree $d-1$ over $kv$ by Fact \ref{fact: r-quasi-divisorial facts}.
Let $w_0$ be an almost-$(d-2)$-prime-divisor of $Kv|kv$.
Then $w := v \circ w_0$ is an almost-$(d-1)$-quasi-prime-divisor of $K|k$ by Fact \ref{fact: r-quasi-divisorial facts}(3), and $v$ is a coarsening of $w$.
To conclude (1), we take $\sigma_1$ to be a generator of $\I_v^n$, and $\sigma_2,\ldots,\sigma_{d-1} \in \I_w^n$ such that 
\[ \I_w^n = \langle \sigma_1,\ldots,\sigma_{d-1} \rangle_{\Lambda_n}. \]
Condition (c) follows from Theorem \ref{maintheorem: definability of visible decomposition groups}, and (b) clearly holds true since $wK/wk \cong \Z^{d-1}$. 
Condition (a) follows from the fact that $w$ is an $n$-visible valuation (Lemma \ref{lemma: quasi-divisors are visible}).

\bibliographystyle{amsalpha}
\bibliography{../../../refs} 
\end{document}